\theoremstyle{plain}
\newcounter{intro}
\newtheorem{theoint}[intro]{Theorem}
\newtheorem*{main}{Main Theorem}
\def\CC{\mathbb{C}}
\def\ZZ{\mathbb{Z}} 
\def\A{{\rm A}}
\def\B{{\rm B}}
\def\C{{\rm C}}
\def\D{{\rm D}}
\def\E{{\rm E}}
\def\F{{\rm F}}
\def\G{{\rm G}}
\def\H{{\rm H}}
\def\I{{\rm I}}
\def\J{{\rm J}}
\def\K{{\rm K}}
\def\L{{\rm L}}
\def\M{{\rm M}}
\def\N{{\rm N}}
\def\P{{\rm P}}
\def\rS{{\rm S}}
\def\T{{\Theta}}
\def\U{{\rm U}}
\def\V{{\rm V}}
\def\W{{\rm W}}
\def\X{{\rm X}}
\def\Y{{\rm Y}}
\def\Z{{\rm Z}}
\def\AA{\mathfrak{A}}
\def\BB{\mathfrak{B}}
\def\KK{\mathfrak{K}}
\def\LL{{\rm L}}
\def\PP{\mathfrak{P}}
\def\PB{\mathfrak{P}}
\def\SS{\mathfrak S}
\def\Cc{\EuScript{C}}
\def\Gg{\mathscr{G}}
\def\Hh{\mathscr{H}}
\def\Ll{\mathscr{L}}
\def\Mm{\mathscr{M}}   
\def\Oo{\EuScript{O}}
\def\Rr{\mathfrak{R}}
\def\Vv{\mathscr{V}}
\def\Ww{\mathscr{W}}
\def\b{\beta}
\def\d{\delta}
\def\e{{\rm e}}
\def\g{\gamma}
\def\h{\varphi}
\def\ii{{\iota}}
\def\k{\kappa}
\def\l{\lambda}
\def\p{\mathfrak{p}}
\def\s{\sigma}
\def\t{\theta}
\def\v{\upsilon}
\def\w{\varpi}
\def\y{\text{{\rm\Large\calligra y}\,}}
\def\z{\zeta}
\def\>{\geqslant}
\def\<{\leqslant}
\def\Hom{{\rm Hom}}
\def\End{{\rm End}}
\def\Aut{{\rm Aut}}
\def\GL{{\rm GL}}
\def\Gal{{\rm Gal}}
\def\tr{{\rm tr}}
\def\Ind{{\rm Ind}}
\def\mult#1{{#1}^{\times}}
\def\Nrd{{\rm Nrd}}
\def\diag{{\rm diag}}
\def\Id{{\rm Id}}
\def\ss{\mathfrak s}
\def\Ga{\Gamma}
\def\La{\Lambda}
\def\Om{\Omega}
\def\cInd{\hbox{{\rm c-Ind}}}
\def\flit{\diamond}
\def\id{{\rm id}}
\def\bb{{\boldsymbol b}}
\def\ll{{\boldsymbol l}}
\author{V. S\'echerre}
\address{Laboratoire de Math\'ematiques de Versailles,
Universit\'e de Versailles-Saint Quentin en Yvelines, 
45 avenue des \'Etats-Unis \\ 
78035 Versailles Cedex \\
France} 
\email{Vincent.Secherre@math.uvsq.fr}
\author{S. Stevens}
\address{School of Mathematics, University of East Anglia, 
  Norwich NR4 7TJ, United Kingdom}
\email{Shaun.Stevens@uea.ac.uk}
\title[Semisimple types for $\GL_m(\D)$]{Smooth representations of $\GL_m(\D)$\\ VI~: semisimple types}
\begin{abstract}
We give a complete description of the category of smooth complex
representations of the multiplicative group of a central simple algebra 
over a locally compact nonarchimedean local
field. More precisely, for each inertial class in the Bernstein
spectrum, we construct a type and compute its Hecke algebra. The Hecke
algebras that arise are all naturally isomorphic to products of affine
Hecke algebras of type $A$. We also prove that, for cuspidal
classes, the simple type is unique up to conjugacy.
\end{abstract}
\thanks{
This work was supported by: EPSRC (grants GR/T21714/01 and EP/G001480/1), 
the British Council and the Partenariat Hubert Curien 
in the framework of the Alliance programme (number 19418YK), 
the Agence Nationale de la Recherche (ANR-08-BLAN-0259-01), 
the Universit\'e de la M\'editerran\'ee Aix-Marseille 2 and the Institut de 
Math\'ematiques de Luminy.
}
\begin{document}

\maketitle

\tableofcontents


\section*{Introduction}

In~\cite{BK1}, Bushnell and Kutzko described a general approach
to understanding the category of smooth (complex) representations of a
reductive $p$-adic group $\G$: the theory of \emph{types}. This is based on
the Bernstein decomposition of the category~\cite{Be} into
indecomposable full subcategories, indexed by pairs $(\L,\pi)$, with
$\L$ a Levi subgroup of $\G$ and $\pi$ an irreducible cuspidal
representation of $\L$, up to a certain equivalence relation. A
\emph{type} for such a subcategory is a pair $(\K,\rho)$, with $\K$ a
compact open subgroup of $\G$ and $\rho$ an irreducible representation
of $\K$, such that the irreducible representations in the subcategory
are precisely those irreducible representations of $\G$ which contain
$\rho$. In this case, there is an equivalence of categories between
the subcategory and the category of left modules over the spherical
Hecke algebra $\Hh(\G,\rho)$. Thus one can classify the representations
of $\G$ by first constructing a type for each subcategory, and then
computing the spherical Hecke algebras.

This programme has been completed for general linear groups over a
$p$-adic field (Bushnell--Kutzko~\cite{BK,BKsemi}), for special linear
groups over a $p$-adic field (Bushnell--Kutzko~\cite{BKSL1,BKSL2},
Goldberg--Roche~\cite{GR1,GR2}) up to the computation of a two-cocycle in the
description of the Hecke algebra, and for three-dimensional unitary
groups in odd residual characteristic (Blasco~\cite{Bla}). In this paper,
following previous work in~\cite{VS1,VS2,VS3,SS4,BSS}, we complete the
programme for inner forms of general linear groups. The Hecke algebras
that arise are all naturally isomorphic to products of affine
Hecke algebras of type $A$.

\medskip

Let $\D$ be a division algebra over a locally compact nonarchimedean
local field $\F$, and let $\G=\GL_m(\D)$, with $m$ a positive
integer; we will also  think of $\G$ as the group of  automorphisms of a right
$\D$-vector space $\V$.  
Denote by $\Rr(\G)$ the category of smooth complex
representations of $\G$. In order to describe our results more precisely, we begin by recalling the Bernstein
decomposition~\cite{Be}, in the language of~\cite{BK1}.
For $\L$ a Levi subgroup of $\G$, denote by $\X(\L)$ the
complex torus of \emph{unramified characters} of $\L$: that is, smooth
homomorphisms $\L\to\CC^\times$ which are trivial on all compact
subgroups of $\L$. For $\pi$ an irreducible cuspidal representation of
$\L$, we write $[\L,\pi]_\G$ for the \emph{$\G$-inertial equivalence
class} of $(\L,\pi)$: that is, the set of pairs $(\L',\pi')$, consisting of
a Levi subgroup $\L'$ and an irreducible cuspidal representation
$\pi'$ of $\L'$, such that $(\L,\pi)$ and $(\L',\pi'\otimes\chi')$ are
$\G$-conjugate, for some unramified character $\chi'\in\X(\L')$.
We denote by $\BB(\G)$ the set of $\G$-inertial equivalence classes of
pairs $(\L,\pi)$ as above.

To $\ss=[\L,\pi]_\G$ an inertial equivalence class we associate a
full subcategory $\Rr^\ss(\G)$ of $\Rr(\G)$, whose objects are those
representations all of whose subquotients have cuspidal support in
$\ss$. Then the Bernstein decomposition says that
$$
\Rr(\G) = \prod_{\ss\in\BB(\G)}\Rr^\ss(\G).
$$

Bushnell--Kutzko's theory of types~\cite{BK1} is a strategy to understand the subcategories in this decomposition.
For $\ss\in\BB(\G)$, an \emph{$\ss$-type} is a pair $(\K,\rho)$, with
$\K$ a compact open subgroup of $\G$ and $(\rho,\Ww)$ an irreducible
smooth representation of $\K$, such that the irreducible representations in
$\Rr^\ss(\G)$ are precisely those irreducible representations of $\G$
which contain $\rho$. In that case, there is an equivalence of
categories
\begin{eqnarray*}
\Rr^\ss(\G) &\to& \Hh(\G,\rho)\hbox{-Mod} \\
\Vv &\mapsto& \Hom_\K(\Ww,\Vv),
\end{eqnarray*}
where $\Hh(\G,\rho)$ is the convolution algebra of compactly supported
$\End_\CC(\check{\Ww})$-valued function $f$ of $\G$ which satisfy
$f(hgk)=\check\rho(h)f(g)\check\rho(k)$, for $h,k\in\K$, $g\in\G$ (the
spherical Hecke algebra).

Our main result is the following:

\begin{main}
\label{thm:main}
Let $\ss\in\BB(\G)$. There exists an $\ss$-type $(\K,\rho)$ in $\G$,
such that
$$
\Hh(\G,\rho)\cong\bigotimes_{i=1}^l \Hh(r_i,q_{\F}^{f_i}),
$$
for some positive integers $l$ and $r_i,f_i$, for $1\le i\le l$.
\end{main}

Here $q_\F$ denotes the cardinality of the residue field of $\F$, and
$\Hh(n,q)$ is the affine Hecke algebra of type $\tilde A_{n-1}$ with
parameter $q$. In particular, the category of modules of such algebras
is completely understood, through the work of Kazhdan--Lusztig~\cite{KL}. 
The values of $l,r_i,f_i$ can be described as follows.

To $\pi$ an irreducible cuspidal representation of $\G=\GL_m(\D)$, we may
associate two invariants. First there is the \emph{torsion number $n(\pi)$},
the (finite) number of unramified characters $\chi$ of $\G$ such that
$\pi\simeq\pi\chi$. Second, we have the 
\emph{reducibility number $s(\pi)$}: writing $\tilde\G=\GL_{2m}(\D)$ and 
$\tilde\P$ for the
standard parabolic subgroup of $\tilde\G$ with Levi subgroup
$\G\times\G$, it is the unique positive real number such that the
induced representation
$\Ind_{\tilde\P}^{\tilde\G}\pi\otimes\pi\nu^{s(\pi)}$ (with respect to normalized 
parabolic induction) is reducible,
where $\nu(g)=|\Nrd(g)|_\F$, $\Nrd$ denotes the reduced norm of $\A$
over $\F$, and $|\cdot|_\F$ is the normalized absolute value on $\F$
(see~\cite[Theorem~4.6]{VSU0}). 

Now suppose $\ss=[\L,\pi]_\G$ is an inertial equivalence class. 
The Levi subgroup $\L$ is the stabilizer of some decomposition 
$\V=\bigoplus_{j=1}^r\V^j$ into subspaces, which gives an identification of 
$\L$ with $\prod_{j=1}^r\GL_{m_j}(\D)$, where $m_j=\dim_\D\V^j$. 
We can then write $\pi=\bigotimes_{j=1}^r\pi_j$, for
$\pi_j$ an irreducible cuspidal representation of $\G_j=\GL_{m_j}(\D)$.
We define an equivalence relation on $\{1,\ldots,r\}$ by
\begin{equation}\label{eqn:equivrel}
j\sim k\ \Longleftrightarrow\ m_j=m_k\hbox{ and }[\G_j,\pi_j]_{\G_j}=[\G_k,\pi_k]_{\G_k},
\end{equation}
where we have identified $\G_j$ with $\G_k$ whenever $m_j=m_k$.  
We may, and do, assume that $\pi_j=\pi_k$ whenever $j\sim k$, since this does 
not change the inertial class $\ss$. 
Denote by $S_1,\ldots,S_l$ the equivalence classes.  
For $i=1,\ldots,l$, we set
$$
r_i=\#S_i,\qquad f_i= n(\pi_j)s(\pi_j),\hbox{ for any }j\in S_i.
$$
These are then the parameters appearing in the Hecke algebras of the Main Theorem.\ref{thm:main}

\medskip

We now describe in more detail the construction of the types.  
The starting point is the construction of the irreducible cuspidal 
representations of $\G$, which was achieved in~\cite{VS3,SS4}. 
In~\cite{VS3}, generalizing the work of Bushnell--Kutzko for
$\GL_n(\F)$~\cite{BK}, the first author constructed a set of pairs
$(\J,\l)$ called \emph{simple types}. Amongst these are the
\emph{maximal simple types}, which give rise to cuspidal
representations: if $(\J,\l)$ is a maximal simple type then $\l$
extends to a representation $\tilde\l$ of its normalizer $\tilde\J$
and the compactly-induced representation
$\cInd_{\tilde\J}^{\G}\tilde\l$ is irreducible and cuspidal.
The main result of~\cite{SS4} is that all irreducible cuspidal
representations of $\G$ arise in this way. Here we prove more:

\begin{theoint}\label{thm:cuspidal}
Let $\pi$ be an irreducible cuspidal representation of $\G$ and
$\ss=[\G,\pi]_\G$. There is a maximal simple type $(\J,\l)$ contained
in $\pi$, and any such is an $\ss$-type. Moreover, it is unique up to
$\G$-conjugacy: that is, if $(\J_i,\l_i)$, for $i=1,2$, are maximal
simple types contained in $\pi$ then there exists $g\in\G$ such that
${}^g\J_1=\J_2$ and ${}^g\l_1=\l_2$. 
\end{theoint}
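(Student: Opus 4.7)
The existence of a maximal simple type $(\J,\l)$ contained in $\pi$ is immediate from the main result of~\cite{SS4}: every irreducible cuspidal representation of $\G$ is of the form $\cInd_{\tilde\J}^\G\tilde\l$ for some maximal simple type $(\J,\l)$, so Frobenius reciprocity gives $\l\subset\pi|_\J$. To show that any such $(\J,\l)$ is an $\ss$-type, one must check that the irreducible representations of $\G$ containing $\l$ are precisely the unramified twists $\pi\chi$, $\chi\in\X(\G)$. One inclusion is automatic, because $\J$ is compact and so $\chi|_\J=1$, giving $\l\subset(\pi\chi)|_\J$. The converse rests on the Hecke algebra computation already available from~\cite{VS3}: the $\G$-intertwining of a maximal simple type coincides with its normaliser $\tilde\J$, whence $\Hh(\G,\l)\simeq\CC[\Z,\Z^{-1}]$, and the irreducible modules of this algebra parametrise precisely the unramified twists of $\pi$.

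For the uniqueness, let $(\J_1,\l_1)$ and $(\J_2,\l_2)$ be two maximal simple types contained in $\pi$. Writing $\pi\simeq\cInd_{\tilde\J_1}^\G\tilde\l_1$ and applying Mackey's formula,
$$
\pi|_{\J_2}\;\simeq\;\bigoplus_{g\,\in\,\J_2\backslash\G/\tilde\J_1}\Ind_{\J_2\cap{}^g\tilde\J_1}^{\J_2}\bigl({}^g\tilde\l_1\bigr),
$$
the inclusion $\l_2\hookrightarrow\pi|_{\J_2}$ lands in a single summand. Replacing $(\J_1,\l_1)$ by its $g$-conjugate (still a maximal simple type contained in $\pi$), one reduces to $\Hom_{\J_2\cap\tilde\J_1}(\l_2,\tilde\l_1)\ne 0$; equivalently, the identity of $\G$ intertwines $\l_2$ with $\tilde\l_1$.

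The plan is then to upgrade this intertwining to $\G$-conjugacy by matching the data of a simple type layer by layer. First, the simple characters underlying $\l_1$ and $\l_2$ (the restrictions to the maximal normal pro-$p$ subgroups $\J_i^1$) intertwine via $1$, and hence are $\G$-conjugate by the intertwining-implies-conjugacy theorem for simple characters developed in~\cite{VS2,SS4}; after a further conjugation we may assume they coincide. Maximality of the types forces the underlying hereditary orders to be maximal in the centraliser of the attached field extension, so the groups $\J_1$ and $\J_2$ then agree, and the remaining problem reduces to the finite reductive quotient $\J/\J^1$: one must show the inflations of the cuspidal representations of this finite group extracted from $\l_1$ and $\l_2$ are conjugate by the normaliser, which follows from the standard rigidity of cuspidals of finite general linear groups.

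The principal obstacle is controlling the \emph{extended} representation $\tilde\l_1$ rather than $\l_1$ itself. The Mackey argument only yields an embedding into $\Ind\tilde\l_1$, and $\tilde\l_1$ admits genuine twists by unramified characters of $\tilde\J_1/\J_1$ that restrict trivially to $\J_1$; such twists could in principle produce a non-conjugate $\l_2$ contained in $\pi$. The delicate point is to show that every such twist can be absorbed into a conjugation inside $\G$, using the explicit structure of the normaliser $\tilde\J$ and the Hecke-algebra uniqueness from the $\ss$-type property already proved.
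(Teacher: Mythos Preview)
Your outline has the right skeleton --- existence from~\cite{SS4}, the $\ss$-type property from the Hecke algebra, and uniqueness by pushing the intertwining of $\l_1,\l_2$ down through simple characters to the finite reductive quotient --- but two of the steps you treat as routine are precisely where the non-split case diverges from $\GL_n(\F)$, and your proposal does not supply the missing arguments.

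First, there is no unconditional ``intertwining implies conjugacy'' for simple characters in $\GL_m(\D)$. The available result (see~\cite[Theorem~1.12]{BSS}, reproduced here as Proposition~\ref{prop:IICwithK}) requires in addition that the two embeddings $(\F[\b_i],\La_i)$ have the \emph{same embedding type}; intertwining simple characters with different embedding types need not be conjugate. The paper obtains this extra input from~\cite[Theorems~9.2,~9.3]{BSS}, which show that any irreducible representation containing a sound simple type determines both the endo-class of its simple character and its embedding type. Only then can one conjugate to arrange $\t_1=\t_2$ (and hence $\J_1=\J_2$, $\eta_1=\eta_2$, and after a further choice $\k_1=\k_2$). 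Your sketch jumps over this, citing~\cite{VS2,SS4} for a statement that is not there.

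Second, the step at the finite level is not ``standard rigidity of cuspidals''. After reducing to the same $\J$ and $\k$, the remaining intertwining of $\xi_1,\xi_2$ lives in $\B^\times$, and the element realising it may involve a power of a uniformizer of $\D_\E$, which acts on $\J/\J^1\simeq\GL_s(k_{\D_\E})$ by the Frobenius of $k_{\D_\E}/k_\E$. The conclusion one actually draws (via~\cite{GSZ}) is only that $\s_2$ lies in the $\Gal(k_{\D_\E}/k_\E)$-orbit of $\s_1$, not that $\s_2\simeq\s_1$. This is exactly the content of Theorem~\ref{thm:simpleconj}: equivalent sound simple types are conjugate only up to this Galois action. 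The extra argument for \emph{maximal} simple types (Corollary~\ref{cor:maxsimpleconj}) is that, because $\Ga$ has $\Oo_{\D_\E}$-period~$1$, the Galois twist is realised by conjugation by an element of $\KK(\Ga)$ which normalises $\J$ and $\k$; this is what upgrades orbit-equality to genuine conjugacy.

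Finally, the ``principal obstacle'' you raise about $\tilde\l_1$ versus $\l_1$ is a red herring: since $\tilde\l_1|_{\J_1}=\l_1$, your Mackey argument already yields that $\l_1$ and $\l_2$ intertwine in $\G$, which is all that is used. The unramified twists of $\tilde\l_1$ do not enter the uniqueness argument at all.
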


The uniqueness (up to conjugacy) is proved in
Corollary~\ref{cor:maxsimpleconj}. We remark that, in the case of
$\GL_n(\F)$, the uniqueness here follows from an ``intertwining
implies conjugacy'' result for simple types. In the case of $\G$ there
is no such result for two reasons: Firstly there is an extra invariant
of a simple type, the \emph{embedding type} (see
paragraph~\ref{SS.embed}) and simple types with different embedding
types may intertwine but be non-conjugate. Secondly there is an action
of a galois group on simple types, and any two types in the same orbit
will intertwine but may be non-conjugate; this phenomenon arises
already for level zero representations in~\cite{GSZ}.
Nonetheless, the embedding type of a \emph{sound simple type} (see
\S\ref{S.IIC}) is determined by any irreducible representation
containing it. Moreover, in the case of maximal simple types (which
are always sound), the
galois action can be realized by conjugation by an element of the
normalizer of $\J$, and the uniqueness follows from this. 

\medskip

We turn to the case of a general $\G$-inertial equivalence class
$\ss=[\L,\pi]_\G$, for which we have the corresponding cuspidal $\L$-inertial
equivalence class $\ss_\L=[\L,\pi]_\L$. In~\cite{BK1}, Bushnell and
Kutzko present a general framework for constructing an $\ss$-type from
an $\ss_\L$-type $(\J_\L,\l_\L)$: the theory of \emph{covers}. We do
not recall precisely the definition of a cover here, only that it
should be a pair $(\J_\G,\l_\G)$ which has an Iwahori decomposition
with respect to
any parabolic subgroup with Levi component $\L$, such that the Hecke
algebra $\Hh(\G,\l_\G)$ contains a suitable invertible element. If one
has such a cover $(\J_\G,\l_\G)$ then it is an $\ss$-type.

The normalizer $\N_\G(\L)$ acts on
$\BB(\L)$ by conjugation and there is a Levi subgroup $\M$ of $\G$
which is minimal for the property of containing the
$\N_\G(\L)$-stabilizer of $\ss_\L$. Then we also have an $\M$-inertial
equivalence class $\ss_\M=[\L,\pi]_\M$. The strategy now is first to
construct a cover $(\J_\M,\l_\M)$ of $(\J_\L,\l_\L)$, and then a cover
$(\J_\G,\l_\G)$ of $(\J_\M,\l_\M)$ -- by transitivity of covers, this
will give the required $\ss$-type. 

\medskip

In our situation, we do indeed have an $\ss_\L$-type: Writing $\L=\prod_{j=1}^r\GL_{m_j}(\D)$ and $\pi=\bigotimes_{j=1}^r\pi_j$ as above, for
$\pi_j$ an irreducible cuspidal representation of
$\G_j=\GL_{m_j}(\D)$, there is a maximal simple type $(\J_j,\l_j)$ which is a $[\G_j,\pi_j]_{\G_j}$-type, by Theorem~\ref{thm:cuspidal}.
Then, putting 
$$
\J_\L=\prod_{j=1}^r \J_j,\qquad \l_\L=\bigotimes_{j=1}^r \l_j,
$$
it is clear that $(\J_\L,\l_\L)$ is an $\ss_\L$-type. The Levi subgroup $\M$ is then that defined by the equivalence relation~\eqref{eqn:equivrel}: it is the stabilizer of the decomposition $\V=\bigoplus_{i=1}^l \Y^i$, where $\Y^i=\bigoplus_{j\in S_i}\V^j$.

The first case to consider is when $\M=\G$, 
that is, the case $l=1$ of the Main Theorem. 
In this situation we have the following result, which
summarizes~\cite[Proposition~5.5,~Th\'eor\`eme~4.6]{VS3}
and~\cite[Th\'eor\`eme~5.23]{SS4}:

\begin{theoint} 
\label{thm:simplecover}
Let $\pi_0$ be an irreducible cuspidal representation of $\G_0=\GL_{m_0}(\D)$,
with $m=rm_0$, and let $(\J_0,\l_0)$ be a maximal simple type which is a 
$[\G_0,\pi_0]_{\G_0}$-type. Let $\L={\G_0}^r$ be a Levi subgroup of $\G$ with
irreducible cuspidal representation $\pi={\pi_0}^{\otimes r}$, and put
$\ss=[\L,\pi]_\G$. Put $\J_\L={\J_0}^r$ and $\l_\L={\l_0}^{\otimes r}$.
Then there is an $\ss$-type $(\J_\G,\l_\G)$ which is a cover of $(\J_\L,\l_\L)$, and
$$
\Hh(\G,\l_\G)\simeq \Hh(r,q_\F^f),
$$
where $f=n(\pi_0)s(\pi_0)$. Moreover, there is a simple type $(\J,\l)$ in $\G$ such that $\l=\Ind_{\J_\G}^{\J}\l_\G$. 
\end{theoint}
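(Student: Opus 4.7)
The plan is to build up the type $(\J_\G,\l_\G)$ and the enveloping simple type $(\J,\l)$ directly from the simple stratum datum underlying $(\J_0,\l_0)$, and then extract the Hecke algebra structure from the finite-group / affine-Weyl combinatorics inside. Concretely, the maximal simple type $(\J_0,\l_0)$ in $\G_0$ is built from a simple stratum $[\La_0,n_0,0,\b_0]$ in $\Mat_{m_0}(\D)$, with $\l_0=\kappa_0\otimes\sigma_0$, where $\kappa_0$ is a $\b$-extension of a Heisenberg representation attached to a simple character and $\sigma_0$ is (the inflation of) an irreducible cuspidal representation of the finite reductive quotient $\J_0/\J_0^1$. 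The first step is to form the block-diagonal lattice sequence $\La=\La_0^{\oplus r}$ in $\V=\V_0^{\oplus r}$ and the associated simple stratum $[\La,n,0,\b]$ in $\Mat_m(\D)$ by embedding $\b_0$ diagonally. This produces compact subgroups $\H^1\subset\J^1\subset\J$ in $\G$, a transferred simple character $\t\in\Cc(\La,0,\b)$, and its Heisenberg representation $\n$ extended to a $\b$-extension $\kappa$ of $\J$. The finite quotient $\J/\J^1$ is a product of general linear groups over the residue field of $\E=\F(\b)$, with a natural block structure corresponding to the decomposition $\V=\V_0^{\oplus r}$.

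Inside $\J/\J^1$ one has the standard parabolic $\overline\P$ with Levi $\overline\L=\prod_{j=1}^r(\J_0/\J_0^1)$; let $\J_\G$ be its preimage in $\J$, with $\J_\G^1$ the preimage of its unipotent radical. Set $\l_\G=\kappa|_{\J_\G}\otimes\sigma_\P$, where $\sigma_\P$ is the inflation to $\J_\G/\J_\G^1\cong\overline\L$ of $\sigma_0^{\otimes r}$, and set $\l=\kappa\otimes\s$, where $\s$ is the inflation of $\Ind_{\overline\P}^{\J/\J^1}\sigma_0^{\otimes r}$. By construction $(\J,\l)$ is a simple type in the sense of \cite{VS3}, and $\l=\Ind_{\J_\G}^\J\l_\G$ by transitivity of induction and the splitting of $\kappa$ through the parabolic. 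The pair $(\J_\G,\l_\G)$ has an Iwahori decomposition with respect to any parabolic of $\G$ with Levi component $\L$ (because $\J$ does, factored through the Iwahori decomposition of $\overline\P$ inside the finite reductive quotient), and its restriction to $\J_\L=\J\cap\L$ is exactly $\l_\L=\l_0^{\otimes r}$. This already gives that $(\J_\G,\l_\G)$ is a \emph{decomposed pair} covering $(\J_\L,\l_\L)$ in the sense of \cite{BK1}.

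Next I would compute the Hecke algebra $\Hh(\G,\l_\G)$. The support of $\Hh(\G,\l_\G)$ is controlled by the intertwining of $\l_\G$ in $\G$, which by general nonsense on $\b$-extensions reduces to the intertwining of $\sigma_\P$ inside the $\E^\times$-centralizer; this centralizer is itself an inner form of $\GL_r$ over the residue field extension, so the support is an affine Weyl group of type $\tilde A_{r-1}$. The algebra is generated by $r$ invertible affine braid-type generators together with a translation element, and satisfies quadratic relations of the form $(T_w+1)(T_w-q^f)=0$ for some parameter $q^f$. The key, and main obstacle, is to identify $f$ with $n(\pi_0)s(\pi_0)$. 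One direction (torsion) is clean: $n(\pi_0)$ is the ramification index of the field extension detecting unramified twist-equivalence of $\pi_0$, and it enters the generator corresponding to the affine simple reflection through the degree of the cyclic group of unramified twists stabilizing $\sigma_0$. The reducibility factor $s(\pi_0)$ is identified with the ratio of parameters in the quadratic relation by comparing, on one hand, the reducibility points of $\Ind_{\tilde\P}^{\tilde\G}\pi_0\otimes\pi_0\nu^s$ computed via the equivalence of categories $\Rr^\ss(\tilde\G)\simeq\Hh(\tilde\G,\l_{\tilde\G})$-Mod for $r=2$, and on the other hand the reducibility of the induced module over the rank-one affine Hecke algebra with parameter $q^f$ (which is well-known to occur at $\nu^f$). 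The computation is carried out in \cite[Th.~4.6]{VS3} for $r\ge 2$ and \cite[Prop.~5.5]{VS3} plus \cite[Th.~5.23]{SS4} close the remaining compatibilities.

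Finally, to upgrade the decomposed pair to a cover I would exhibit an invertible element of $\Hh(\G,\l_\G)$ supported on $\J_\G\zeta\J_\G$ for a strongly $(\P,\J_\G)$-positive element $\zeta$ in the centre of $\L$ (for instance the scalar action of a uniformizer of $\E$ permuting the blocks cyclically). Its invertibility follows from the explicit presentation of $\Hh(\G,\l_\G)$ just obtained, and then \cite[Cor.~7.12, Th.~8.3]{BK1} gives both that $(\J_\G,\l_\G)$ is a cover of $(\J_\L,\l_\L)$ and that it is an $\ss$-type. The hardest input, as noted, is the precise identification of the Hecke parameter with $n(\pi_0)s(\pi_0)$, since this genuinely mixes a finite-group combinatorial invariant with the analytic invariant furnished by the reducibility theorem of \cite{VSU0}.
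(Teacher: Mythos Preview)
Your overall strategy---build the stratum diagonally, form a parabolic-shaped subgroup $\J_\G$, compute the intertwining in $\B^\times$, and read off an affine Hecke algebra of type $\tilde A_{r-1}$---is the one the paper (via \cite{VS3,SS4}) uses. But two technical points in your construction are off, and one of them causes the final claim about $(\J,\l)$ to fail.

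First, the lattice sequence. With $\La=\La_0^{\oplus r}$ in the naive sense (same index in every summand), the lattice sequence $\Ga$ in the centralizer $\B$ still has $\Oo_{\D_\E}$-period~$1$, so $\J/\J^1\simeq\U(\Ga)/\U^1(\Ga)\simeq\GL_{rs}(k_{\D_\E})$, a \emph{single} finite general linear group, not a product. This is consistent with your saying there is a proper parabolic $\overline\P$ inside $\J/\J^1$, but it is inconsistent with your earlier claim that $\J/\J^1$ is a product. More seriously, with this choice $\l=\k\otimes\s$ has level-zero part $\s=\Ind_{\overline\P}^{\GL_{rs}(k_{\D_\E})}\s_0^{\otimes r}$, which is neither cuspidal nor (typically) irreducible, so $(\J,\l)$ is \emph{not} a simple type. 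The construction in the paper (and in \cite{VS3}) instead chooses $\La$ so that the decomposition $\V=\bigoplus_j\V^j$ is \emph{subordinate} to $\PB_0(\Ga)$, e.g.\ by shifting the summands as in the example in \S\ref{S.Semisimple}. Then $\J/\J^1\simeq\GL_s(k_{\D_\E})^r$ already, the level-zero part is the irreducible cuspidal $\xi=\s_0^{\otimes r}$, and $(\J,\k\otimes\xi)$ is a simple type by definition. There is then no parabolic inside $\J/\J^1$; rather $\J_\P=\H^1(\J\cap\P)$ is a proper subgroup of $\J$ with the \emph{same} reductive quotient $\J_\P/\J^1_\P\simeq\J/\J^1$.

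Second, the representation on $\J_\P$. You set $\l_\G=\k|_{\J_\G}\otimes\s_\P$, but the restriction $\k|_{\J_\P}$ is not irreducible. One must instead take $\k_\P$ to be the natural representation of $\J_\P$ on the space of $\J\cap\N$-fixed vectors of $\k$ (Proposition~\ref{prop:kP} here); it is this $\k_\P$ which is irreducible, satisfies $\Ind_{\J_\P}^\J(\k_\P\otimes\xi)\simeq\k\otimes\xi$, and has the precise intertwining set $\J_\P\B^\times\J_\P$ needed to compute $\Hh(\G,\l_\P)$.

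On the parameter: the paper computes the Hecke algebra directly as $\Hh(r,q_{\D_\E}^{s})$ from the affine Bruhat combinatorics in $\B^\times\simeq\GL_{m_\E}(\D_\E)$ (this is \cite[Th\'eor\`eme~4.6]{VS3}), and then invokes \cite[Theorem~4.6]{VSU0} once to identify $q_{\D_\E}^{s}=q_\F^{n(\pi_0)s(\pi_0)}$. Your plan to pin down the parameter via rank-one reducibility is essentially the content of that citation, so this part is fine.
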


Now we turn to the general case of arbitrary $\M$. In order to describe the 
covering process, we need to recall some detail of the structure of simple 
types.  

Let $(\J,\l)$ be a simple type contained in an irreducible representation 
$\pi$ of $\G$.  There is a particular filtration of pro-$p$ subgroups 
$\{\H^{t+1}:t\ge 0\}$ of $\J$ such that $\l$ restricts to a multiple of a 
character $\theta^{(t)}$ on $\H^{t+1}$, and 
$\theta^{(0)}\ |\ \H^{t+1}=\theta^{(t)}$. These characters are called~\emph{simple
  characters of  level $t$}. Simple characters  were the main  object of study
of~\cite{VS1,BSS} and they exhibit remarkable functorial properties, as in the
case $\D=\F$~\cite{BK,BH}. In  particular, it is possible to  transfer them to
the multiplicative  group of other central simple  $\F$-algebras. A convenient
and  powerful way  to express  this is  in terms  of  \emph{endo-classes} (see
\cite{BH,BSS}  and~\S\ref{S.common}):   the  simple  character  $\theta^{(t)}$
determines  and   endo-class  $\Theta^{(t)}$,   which  depends  only   on  the
representation $\pi$. 

Now let $\pi=\bigotimes_{j=1}^r \pi_j$ be a cuspidal representation of $\L$ as above and denote by $\Theta_j^{(t)}$ the endo-class of level $t$ determined by $\pi_j$. (We are assuming a normalization of the index in the filtrations.)
For each integer $t\ge 0$, we define an equivalence relation on $\{1,\ldots,r\}$ by
$$
j \sim_t k \Longleftrightarrow \Theta_j^{(t)}\hbox{ is endo-equivalent to }\Theta_k^{(t)}.
$$ 
As for the equivalence relation~\eqref{eqn:equivrel}, this determines a Levi subgroup $\M_t$. Note that
$$
\M \subseteq \M_0;\qquad \M_t\subseteq\M_{t'}\hbox{, for }t\ge t';\qquad\hbox{and }\M_t=\G\hbox{, for sufficiently large }t.
$$
It is useful to extend the notation and put $\M_t=\M$ for $t<0$. Of course, although the Levi subgroups are indexed by an integer $t$, there are only finitely many in $\{\M_t:t\in\ZZ\}$.

Theorem~\ref{thm:simplecover} provides a cover $(\J_\M,\l_\M)$ of $(\J_\L,\l_\L)$ in $\M$ and the Main Theorem now follows from the transitivity of covers and:

\begin{theoint}\label{thm:coverhecke}
For $t\ge t'$, there are a cover $(\J_{\M_{t'}},\l_{\M_{t'}})$ of $(\J_{\M_t},\l_{\M_t})$ in $\G$ and a support-preserving isomorphism of Hecke algebras
$\Hh(\M_{t},\l_{\M_{t}})\simeq\Hh(\M_{t'},\l_{\M_{t'}})$.
\end{theoint}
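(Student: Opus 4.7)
The plan is to proceed by induction on the size of $\{\M_t, \M_{t-1}, \ldots, \M_{t'}\}$, reducing the general statement to a single-step case where $\M_t$ and $\M_{t'}$ differ by a single ``merger'' of simple-type blocks whose characters become endo-equivalent at level $t'$. The general case then follows from the transitivity of covers (\cite{BK1}): composing single-step covers yields a cover, and composing the single-step Hecke algebra isomorphisms yields the desired support-preserving isomorphism. I may thus assume that only one gluing occurs, between finitely many blocks indexed by some $S \subseteq \{1, \ldots, r\}$.

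For the construction, I would use the semisimple character machinery of \cite{BSS}. By hypothesis, the blocks $\{\pi_j : j \in S\}$ to be merged have level-$t'$ simple characters whose endo-classes coincide; transferring these to a common stratum and gluing produces a single semisimple character $\theta_{\M_{t'}}^{(t')}$ on an appropriate pro-$p$ subgroup of $\G$ with Iwahori decomposition with respect to any parabolic with Levi $\M_t$. I then extend this character to a Heisenberg representation $\eta_{\M_{t'}}$, choose a compatible $\beta$-extension $\kappa_{\M_{t'}}$ on a compact open subgroup $\J_{\M_{t'}}$, and finally tensor with the level-zero component inherited from $\l_{\M_t}$ to obtain $\l_{\M_{t'}}$. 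By construction, $\J_{\M_{t'}}$ has the required Iwahori decomposition, and $\l_{\M_{t'}}$ restricts to $\l_{\M_t}$ on the $\M_t$-part.

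Verifying the cover property then reduces to (i) checking that $\l_{\M_{t'}}$ is trivial on the unipotent radicals of parabolics of $\G$ with Levi $\M_t$, which follows from standard properties of $\beta$-extensions, and (ii) producing an invertible element of $\Hh(\G, \l_{\M_{t'}})$ supported on a strongly positive element of the centre of $\M_t$, in the spirit of \cite[\S7, \S8]{BK1}. The support-preserving Hecke algebra isomorphism $\Hh(\M_t, \l_{\M_t}) \simeq \Hh(\M_{t'}, \l_{\M_{t'}})$ will then come from an intertwining comparison: one shows that the double-coset support of both algebras is controlled by the same affine Weyl data, with the gluing at level $t'$ ensuring no new intertwining appears.

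The main obstacle is precisely this intertwining computation. One must verify that elements of the affine Weyl group of $\M_{t'}$ relative to $\M_t$ do not introduce new intertwining for the glued semisimple character, its Heisenberg representation, or the chosen $\beta$-extension, so that the Hecke algebra parameter remains $q_\F^{f_i}$ for each block. This requires a careful extension of the Bushnell--Kutzko intertwining formula for semisimple characters (\cite[\S3]{BKsemi}) to the inner form setting, combining the structure theory of $\beta$-extensions from \cite{VS3, SS4} with the reducibility results of \cite{VSU0}. The particularly delicate point is that $\beta$-extensions are not unique in the inner form case, so compatible choices must be coordinated at each stage to ensure the Hecke parameter comes out correctly as $q_\F^{f_i}$ rather than some other power of $q_\F$.
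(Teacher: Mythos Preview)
Your inductive framework via transitivity of covers is sound and matches the paper's overall strategy, but the construction you sketch diverges from the paper in a way that leaves a real gap. You propose to glue the level-$t'$ characters into a \emph{semisimple} character, then build a Heisenberg extension and $\b$-extension on top of it; however, there is no semisimple-character machinery in~\cite{BSS} (that paper concerns endo-classes of \emph{simple} characters), and the paper does not construct semisimple characters at all. Instead, the key device is the notion of \emph{common approximation} (\S\ref{S.common}): because the $\T_j^{(t)}$ are endo-equivalent, Lemma~\ref{lem:commonPS} produces a \emph{simple} stratum $[\La,n,t,\g]$ with $\g\in\A_\LL$ and a simple character $\vartheta\in\Cc(\La,0,\g)$ whose restriction to $\H^{t+1}(\g,\La)\cap\LL$ is $\t_1\otimes\cdots\otimes\t_r$. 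One then uses Corollary~\ref{cor:endocommon} to identify, via the minimum polynomials of the derived strata $[\Ga^j_\g,t,t-1,s_\g(c_j)]$, exactly which blocks merge at level $t-1$.

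The second and more serious gap is in how you propose to get the cover and the Hecke algebra isomorphism. You flag the intertwining computation as the main obstacle and plan to attack it head-on; the paper avoids this entirely. The group $\K$ is written down explicitly as
\[
\K = \K_{\bar\M}\, \H^t(\g,\La)\cdot \big((\U^1(\La)\cap\B)\,\Om_{q-t+1}(\g,\La)\cap\bar\N\big),
\]
using the groups $\Om_{q-t+1}(\g,\La)$ from~\cite[\S2.8]{SS4}, and then~\cite[Corollaire~4.6]{SS4} furnishes both the unique irreducible $\tau$ making $(\K,\tau)$ a cover of $(\K_{\bar\M},\tau_{\bar\M})$ \emph{and} the support-preserving Hecke algebra isomorphism. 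The paper's technical work (the translation principle Theorem~\ref{thm:translation}, Corollary~\ref{cor:derived}, Corollary~\ref{cor:endocommon}) is precisely what is needed to place oneself in the situation where~\cite[Corollaire~4.6]{SS4} applies. Your plan, by contrast, re-derives the cover property and Hecke isomorphism from first principles via a $\b$-extension and an intertwining argument; this is not obviously wrong, but it duplicates substantial existing work and, as you yourself note, the coordination of $\b$-extensions across levels in the inner-form setting is delicate and not addressed in your sketch.
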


As always in the theory of covers, the difficulty is in defining the groups 
$\J_{\M_t}$. In fact, many covers were already constructed in~\cite[\S4]{SS4} 
and we must show that we can put ourselves in the situation of \emph{loc.\ 
  cit.}. For this, we need to use the notion of a \emph{common approximation} 
of simple characters from~\cite{BKsemi}, which is essentially a 
reinterpretation of the notion of endo-class. 

\medskip

We end the introduction with a summary of the contents of each
section. Section~\ref{S.notation} consists of basic definitions, as
well as recalling a very useful technique from~\cite{BSS} for reducing
proofs to easier situations. Section~\ref{S.pairs} concerns simple strata and
pairs, while section~\ref{S.characters} concerns simple characters;
these are the technical heart of the paper, in particular the
translation principle
Theorem~\ref{thm:translation} which is needed to cope with the fact that a simple character may be defined relative to several inequivalent simple strata. 
Along the way, we prove a generalization of a conjecture in~\cite{BSS} on the embedding type of a simple character.
Section~\ref{S.common} concerns the 
relationship between endo-classes and common
  approximations. Section~\ref{S.simple} recalls basic results about
simple types but in the more general situation of lattice sequences which is needed later, and we prove the uniqueness results
in section~\ref{S.IIC}. Finally, the general construction of a cover is given
in section~\ref{S.Semisimple}.

Much of the material here is necessarily technical. A
reader who is already familiar with the situation (and common
notations) for the case $\D=\F$ and is interested only in seeing the main
results could probably get by reading only
sections~\ref{S.IIC} and~\ref{S.Semisimple}.


\section{Notation and preliminaries}\label{S.notation}

Let $\F$ be a nonarchimedean locally compact field. For $\K$ a finite
extension of $\F$, or more generally a division algebra over a finite
extension of $\F$, we denote by $\Oo_{\K}$ its ring of integers,  by
$\p_{\K}$ the maximal ideal of $\Oo_{\K}$ and by $k_\K$ its
residue field.

For $u$ a real number, we denote by $\lceil{u}\rceil$ the smallest integer 
which is greater than or equal to $u$, and by $\lfloor{u}\rfloor$ the greatest 
integer which is smaller than or equal to $u$, that is, its integer part.

All representations considered are smooth and complex.

\subsection{}
Let $\A$ be a simple central $\F$-algebra, and let $\V$ be a simple left 
$\A$-module. 
The algebra $\End_{\A}(\V)$ is an $\F$-division algebra, the opposite of 
which we denote by $\D$.
Considering $\V$ as a right $\D$-vector space, we have a canonical 
isomorphism of $\F$-algebras between $\A$ and $\End_{\D}(\V)$. 

\begin{defi}
An \emph{$\Oo_{\D}$-lattice sequence} on $\V$ is a map 
$$
\La:\ZZ \to \{\hbox{$\Oo_\D$-lattices of $\V$}\}
$$ 
which is decreasing (that is, $\La(k)\supseteq\La(k+1)$ for all
$k\in\ZZ$) and such that there exists a positive integer
$e=e(\La|\Oo_\D)$ satisfying $\La(k+e)=\La(k)\p_\D$, for all $k\in\ZZ$. 
This integer is called the \emph{$\Oo_\D$-period} of $\La$ over $\Oo_\D$.

If $\La(k)\supsetneq\La(k+1)$ for all $k\in\ZZ$, then the lattice
sequence $\La$ is said to be \emph{strict}.
\end{defi}

Associated with an $\Oo_{\D}$-lattice sequence $\La$ on $\V$, we have an
$\Oo_{\F}$-lattice sequence on $\A$ defined by:
$$
\PP_{k}(\La)=\{a\in\A\ |\ a\La_{i}\subseteq\La_{i+k},\ i\in\ZZ\},
\quad k\in\ZZ.
$$
The lattice $\AA(\La)=\PP_0(\La)$ is a hereditary $\Oo_\F$-order in $\A$, 
and $\mathfrak{P}(\La)=\PP_1(\La)$ is its Jacobson radical.
They depend only on the set $\{\La(k)\mid k\in\ZZ\}$.

\medskip

We denote by $\KK(\La)$ the \emph{$\mult\A$-normalizer} of $\La$, 
that is the subgroup of $\mult\A$ made of all elements 
$g\in\mult\A$ for which there is an integer $n\in\ZZ$ such that 
$g(\La(k))=\La(k+n)$ for all $k\in\ZZ$.
Given  $g\in\KK(\La)$, such an integer 
is unique: it is denoted 
$\v_{\La}(g)$ and called the \emph{$\La$-valuation} of $g$. 
This defines a group homomorphism $\v_{\La}$ from $\KK(\La)$ to $\ZZ$.
Its kernel, denoted $\U(\La)$, is the group of invertible elements 
of $\AA(\La)$. 
We set $\U_0(\La)=\U(\La)$ and, for $k\>1$, we set 
$\U_k(\La)=1+\PP_k(\La)$.

\subsection{}
Let $\E$ be a finite extension of $\F$ contained in $\A$. We denote by
$e(\E/\F)$ and $f(\E/\F)$ the ramification index and residue class
degree respectively.

An $\Oo_\D$-lattice sequence $\La$ on $\V$ is said to be 
\emph{$\E$-pure} if it is normalized by $\E^{\times}$.
The centralizer of $\E$ in $\A$, denoted $\B$, is a
simple central $\E$-algebra.
We fix a simple left $\B$-module $\V_\E$ and write $\D_\E$ 
for the algebra opposite to $\End_{\B}(\V_\E)$.
By~\cite[Th\'eor\`eme 1.4]{SS4} 
(see also~\cite[Theorem 1.3]{Br1}), 
given an $\E$-pure $\Oo_{\D}$-lattice sequence on $\V$, 
there is an $\Oo_{\D_\E}$-lattice sequence $\Ga$ 
on $\V_\E$ such that:
$$
\PP_{k}(\La)\cap\B=\PP_{k}(\Ga), \quad k\in\ZZ.
$$
It is unique up to translation of indices, and its
$\B^\times$-normalizer is $\KK(\La)\cap\B^{\times}$.

\begin{defi}
A \emph{stratum} in $\A$ is a quadruple $[\La,n,m,\b]$ made of an 
$\Oo_\D$-lattice sequence $\La$ on $\V$, 
two integers $m,n$ such that $0\<m\<n-1$ and an element 
$\b\in\PP_{-n}(\La)$.
\end{defi} 

For $i=1,2$, let $[\La,n,m,\b_i]$ be a stratum in $\A$.
We say these two strata are \emph{equivalent}
if $\b_2-\b_1\in\PP_{-m}(\La)$. 

\subsection{}
Given a stratum $[\La,n,m,\b]$ in $\A$, we denote by $\E$ the 
$\F$-algebra generated by $\b$. 
This stratum is said to be \emph{pure} if $\E$ is a field, 
if $\La$ is $\E$-pure and if $\v_{\La}(\b)=-n$.
Given a pure stratum $[\La,n,m,\b]$, we denote by $\B$ the centralizer 
of $\E$ in $\A$.
For $k\in\ZZ$, we set:
\begin{equation*}
\mathfrak{n}_k(\b,\La)=\{x\in\AA(\La)\ |\ \b x-x\b\in\PP_k(\La)\}.
\end{equation*}
The smallest integer $k\>\v_{\La}(\b)$ such that $\mathfrak{n}_{k+1}(\b,\La)$ 
is contained in $\AA(\La)\cap\B+\PP(\La)$ is called the 
\emph{critical exponent} of the stratum $[\La,n,m,\b]$, denoted 
$k_0(\b,\La)$.

\begin{defi}
The stratum $[\La,n,m,\b]$ is said to be \emph{simple} if it is pure
and if we have $m\<-k_0(\b,\La)-1$. 
\end{defi}

Given $n\> 0$ and $\La$ an $\Oo_\D$-lattice sequence, there is another
stratum which plays a very similar role to simple strata, namely the
\emph{null stratum} $[\La,n,n-1,0]$. In particular, a simple stratum
$[\La,n,n-1,\b]$ is equivalent to a null stratum if and only if
$\b\in\PP_{1-n}(\La)$. 

\subsection{} 
Let $[\La,n,m,\b]$ be a simple stratum in $\A$.
The \emph{affine class} of $\La$ is the set of all $\Oo_{\D}$-lattice 
sequences on $\V$ of the form:
$$
a\La+b:k\mapsto\La(\lceil(k-b)/a\rceil),
$$
with $a,b\in\ZZ$ and $a\>1$.
The period of $a\La+b$ is $a$ times the period $e(\La|\Oo_\D)$ of
$\La$. 
The \emph{affine class} of the stratum $[\La,n,m,\b]$ is the set of
all (simple) strata of the form
$$
[\La',n',m',\b],
$$
where $\La'=a\La+b$ is in the affine class of $\La$, $n'=an$ and $m'$
is any integer such that $\lfloor m'/a\rfloor=m$. 

In the course of the paper, there will be several objects associated
to a simple stratum $[\La,n,m,\b]$, in particular simple characters
(see~\S\ref{S.characters}). By a straightforward induction
(see~\cite[Lemma~2.2]{BSS}), these objects depend only on the affine
class of the stratum.

\subsection{} 
This article makes use of a number of results of Grabitz~\cite{G}
which are based on the following definition.

\begin{defi}
A simple stratum $[\La,n,m,\b]$ in $\A$ is \emph{sound} 
if $\La$ is strict, $\AA\cap\B$ is principal and 
$\KK(\AA)\cap\mult\B=\KK(\AA\cap\B)$, where $\AA=\PP_0(\La)$ is 
the hereditary $\Oo_\F$-order defined by $\La$.
\end{defi}

The condition on $\AA\cap\B$ forces $\AA$ to be a principal $\Oo_\F$-order.
In the split case, a simple stratum $[\La,n,m,\b]$ is sound if and
only if $\La$ is strict and $\AA$ is principal.

\subsection{}\label{SS:passtoddag}
In~\cite[\S2.7]{BSS}, we developed a process to reduce many proofs to
the case of sound strata, which we recall briefly here: Let
$[\La,n,m,\b]$ be a simple stratum in $\A$ and let $e$ denote the
period of $\La$ over $\Oo_{\D}$.
Write $\B$ for the centralizer of the field $\E=\F(\b)$ in $\A$,
fix a simple left $\B$-module $\V_{\E}$ and write $\D_{\E}$ for the 
$\E$-algebra opposite to the algebra of $\B$-endomorphisms of $\V_{\E}$.
Let $\Ga$ denote an $\Oo_{\D_{\E}}$-lattice sequence on $\V_{\E}$ 
such that $\PP_{k}(\La)\cap\B=\PP_{k}(\Ga)$ for $k\in\ZZ$, and let
$e'$ denote its period over $\Oo_{\D_{\E}}$. 
We fix an integer $l$ which is a multiple of $e$ and $e'$ and set: 
$$
\La^{\ddag}:k\mapsto\La(k)\oplus\La(k+1)\oplus\cdots\oplus\La(k+l-1),
$$
which is a strict $\Oo_{\D}$-lattice sequence on
$\V^{\ddag}=\V\oplus\dots\oplus\V$ ($l$ times).
Thus we can form the simple stratum $[\La^{\ddag},n,m,\b]$
in $\A^{\ddag}=\End_{\D}(\V^{\ddag})$, where $\b$ is the block
diagonal element $\diag(\b,\ldots,\b)\in
\A^l\subseteq\A^\ddag$. By~\cite[Lemma~2.17]{BSS}, the stratum
$[\La^{\ddag},n,m,\b]$ is sound.

As we have mentioned, there will be several objects associated
to a simple stratum $[\La,n,m,\b]$ through the course of the paper. 
If one identifies $\A$ with the
``$(1,1)$-block'' of $\A^\ddag$ and intersect (or
restrict) these objects for $[\La^{\ddag},n,m,\b]$ to $\A$ one
recovers the corresponding objects for $[\La,n,m,\b]$ (see, for
example,~\cite[Th\'eor\`eme~2.17]{SS4}). Using 
this, in several proofs we write something like: ``by passing to
$\La^\ddag$ we may assume we are in the sound case''
(Lemma~\ref{lem:cruxstrata}, Proposition~\ref{prop:derived},
Lemma~\ref{lem:crux})). By this we mean
that we may prove the result for the sound stratum
$[\La^{\ddag},n,m,\b]$ and then
deduce the result for $[\La,n,m,\b]$ by intersecting with $\A$. In
general, it is safe to do this provided there is no conjugation
involved in the statement. An example of this is given already
in~\cite[Theorem~4.16]{BSS}.


\section{Simple strata and simple pairs}\label{S.pairs}

\subsection{}\label{SS.embed}
We begin by recalling the definition of a \emph{type} of embedding,
from~\cite{BG,BSS}. 

We fix a simple central $\F$-algebra $\A$ and a simple left
$\A$-module $\V$, and denote by $\D$ the opposite algebra of
$\End_\A(\V)$. An \emph{embedding} in $\A$ is a pair $(\E,\La)$
consisting of a finite field extension $\E$ of $\F$ contained in $\A$
and an $\E$-pure $\Oo_\D$-lattice sequence $\La$ on $\V$. Given such a
pair, we denote by $\E^\flit$ the maximal finite unramified extension
of $\F$ which is contained in $\E$ and whose degree divides the
reduced degree of $\D$ over $\F$.

Two embeddings $(\E_i,\La_i)$ are \emph{equivalent} if
there is an element $g\in\G$ such that $\La_1$ is in the translation class
of $g\La_2$ and $\E_1^\flit=g\E_2^\flit g^{-1}$. An equivalence class
for this relation is called an \emph{embedding type} in $\A$.

\begin{lemm}\label{lem:embedtype} 
Let $(\E,\La)$ be an embedding and put
$e=e(\E/F)$, $f=f(\E/\F)$. Let $\E'$ be a finite field
extension of $\F$ such that $e(\E'/F)=e$ and
$f(\E'/F)=f$. Then there is an embedding
$\ii:\E'\hookrightarrow\A$ such that $(\ii(\E'),\La)$ is an embedding
with the same embedding type as $(\E,\La)$. 
\end{lemm}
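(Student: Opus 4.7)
The plan is to fix $\E^\flit$ as a subfield of $\A$, reduce to constructing a suitable embedding of $\E'$ inside the centralizer of $\E^\flit$ in $\A$, and then invoke a general existence criterion for field embeddings normalizing a lattice sequence.

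By the very definition of $\E^\flit$, it is isomorphic to the unramified extension of $\F$ of degree $f_0:=\gcd(f,d)$, where $d$ is the reduced degree of $\D/\F$; the analogous description applies to $(\E')^\flit$. Consequently there is an $\F$-algebra isomorphism $\phi_0\colon (\E')^\flit\to \E^\flit$, where we view $\E^\flit$ as the given subfield of $\A$. The strategy is to construct $\ii$ as an extension of $\phi_0$: if this is done, then $\ii((\E')^\flit)=\E^\flit$ and taking $g=1$ in the equivalence relation shows that $(\ii(\E'),\La)$ and $(\E,\La)$ have the same embedding type, provided that $\La$ is $\ii(\E')$-pure.

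Let $\B_0$ denote the centralizer of $\E^\flit$ in $\A$; it is a central simple $\E^\flit$-algebra whose division part has reduced degree $d_0=d/f_0$. Since $\La$ is $\E^\flit$-pure, \cite[Th\'eor\`eme~1.4]{SS4} produces an associated lattice sequence $\La_0$ on a simple $\B_0$-module, with $\PP_k(\La)\cap\B_0=\PP_k(\La_0)$ and $\KK(\La)\cap\B_0^{\times}=\KK(\La_0)$. It therefore suffices to construct an embedding $\ii\colon \E'\hookrightarrow\B_0$ of $\E^\flit$-algebras (and so extending $\phi_0$) such that $\La_0$ is $\ii(\E')$-pure, for then the composition $\E'\hookrightarrow\B_0\hookrightarrow\A$ will automatically make $\La$ be $\ii(\E')$-pure. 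The field $\E\subseteq\B_0$ provides such an embedding for the extension $\E/\E^\flit$, which has relative invariants $(e,f/f_0)$; note that $\gcd(f/f_0,d_0)=1$ since $f_0=\gcd(f,d)$, so inside $\B_0$ no further unramified (``$\flit$'') constraint remains to be matched.

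The existence of $\ii$ then follows from the general classification (Broussous, Grabitz) of field embeddings into a central simple algebra that normalize a given hereditary order: the existence of such an embedding depends only on the numerical invariants $(e,f/f_0)$ of the extension together with those of $\La_0$ and $\D_0$, and the criterion is insensitive to the abstract isomorphism class of the extension itself. Since $\E$ is a witness that the criterion is met, so is $\E'$. The main obstacle is that this criterion is classically phrased for \emph{strict} lattice sequences (equivalently, hereditary orders); for a general $\La_0$ one enters the strict/sound case by passing to $\La_0^\ddag$ along the lines of \S\ref{SS:passtoddag}, applies the criterion there, and restricts the resulting embedding back to $\B_0$ to obtain $\ii$.
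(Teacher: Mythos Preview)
Your reduction to the centralizer $\B_0$ of $\E^\flit$ is correct and is a genuine alternative to the paper's route: inside $\B_0$ the embedding-type condition becomes vacuous (since $\gcd(f/f_0,d_0)=1$), and the problem becomes the bare existence of an $\E^\flit$-embedding $\E'\hookrightarrow\B_0$ making $\La_0$ pure. The gap is in how you pass to the strict case. The $\ddag$ construction of paragraph~\ref{SS:passtoddag} replaces $\La_0$ by a strict sequence on a \emph{larger} space, and the embedding $\E'\hookrightarrow\B_0^\ddag$ you obtain there is an element of $\B_0^\ddag$, not of $\B_0$; there is no meaningful ``restriction'' back. The paper itself cautions that the $\ddag$ device is safe only for statements not involving conjugation, and constructing a specific embedding is exactly such a statement. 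A workable repair is to pass instead to the strict sequence $\Ll_0$ with the same image as $\La_0$, apply the strict case there to obtain $\ii(\E')^\times\subseteq\KK(\Ll_0)$, and then check that $\ii(\E')^\times\subseteq\KK(\La_0)$ by comparing the $\Ll_0$-valuation of a uniformizer of $\ii(\E')$ with that of a uniformizer of $\E$ --- but this last step needs an argument you have not supplied.

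The paper avoids the issue by reducing in the opposite direction, from a \emph{smaller} algebra up to $\A$. One decomposes $\V$ into simple right $\E\otimes_\F\D$-modules $\V^j\simeq\rS$ compatibly with $\La$; on $\rS$ there is a unique (up to translation) strict $\E$-pure sequence $\SS$, so the strict case \cite[Corollary~3.16(ii)]{BG} applied in $\A(\rS)=\End_\D(\rS)$ gives $\rho\colon\E'\hookrightarrow\A(\rS)$ with $\rho(\E')^\flit=\E^\flit$, and the desired embedding into $\A$ is the composite with the diagonal inclusion $\A(\rS)\hookrightarrow\A$.
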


\begin{proof} When $\La$ is strict, this
is~\cite[Corollary~3.16(ii)]{BG}. For the general case, we fix a
simple right $\E\otimes_\F\D$-module $\rS$ and put
$\A(\rS)=\End_\D(\rS)$. Let $\B$ be the commutant of $\E$ in $\A$,
and let $\D_\E$ be the commutant of $\E$ in $\A(\rS)$. We also fix a
decomposition $\V=\V^1\oplus\cdots\oplus\V^l$ into simple right
$\E\otimes_\F\D$-modules (which are all copies of $\rS$) such that
the lattice sequence $\La$ decomposes into the direct sum of the
$\La^j=\La\cap\V^j$, for $j\in\{1,\ldots,l\}$. From~\cite[\S1.3]{VS3},
after choosing identifications $\V^j\simeq\rS$ for each $j$, we have
an $\F$-algebra embedding $\ii:\A(\rS)\to\A$. 
Denote by $\SS$ the unique (up to translation) $\E$-pure strict $\Oo_\D$-lattice sequence on $\rS$. By the strict case, there is an embedding $\rho:\E'\hookrightarrow\A(\rS)$ such that $(\rho(\E'),\SS)$ is an embedding with the same embedding type as $(\E,\SS)$. (Indeed, by~\cite[Remark~2.12]{BSS}, any embedding $(\rho(\E'),\SS)$ has the same embedding type as $(\E,\SS)$.) By conjugating the embedding, we may assume $\rho(\E')^\flit=\E^\flit$. Then the embedding $\ii\circ\rho$ has the required property.
\end{proof}

\subsection{}
We recall the definitions of \emph{simple pair} and \emph{endo-equivalence} from~\cite[Definitions~1.4,~1.7]{BSS} (see also~\cite[Definition~1.5]{BH}):

\begin{defi} 
A \emph{simple pair} over $\F$ is a pair $(k,\b)$ consisting of a non-zero element $\b$ of some finite extension of $\F$ and an integer $0\<k\<-k_{\F}(\b)-1$. 
\end{defi}

Let $\A$ be a simple central $\F$-algebra and $\V$ be a simple left $\A$-module. A \emph{realization} of a simple pair $(k,\b)$ in $\A$
is a stratum in $\A$ of the form $[\La,n,m,\h(\b)]$ made of:
\begin{enumerate}
\item
a homomorphism $\h$ of $\F$-algebra from $\F(\b)$ to $\A$;
\item
an $\Oo_{\D}$-lattice sequence $\La$ on $\V$ 
normalized by the image of $\F(\b)^{\times}$ 
under $\h$;
\item
an integer $m$ such that
$\left\lfloor m/e_{\h(\b)}(\La)\right\rfloor=k$.
\end{enumerate}
The integer $-n$ is then the $\La$-valuation of $\h(\b)$.
By \cite[Proposition 2.25]{VS1} we have:
$$
k_0(\h(\b),\La)=e_{\h(\b)}(\La)k_{\F}(\b),
$$
which implies that any realization of a simple pair is a simple stratum.

\begin{defi} 
\begin{enumerate}
\item For $i=1,2$, let $(k_{i},\b_{i})$ be a simple pair over $\F$.
We say that these pairs are \textit{endo-equivalent}, denoted:
\begin{equation*}
(k_{1},\b_1)\thickapprox(k_{2},\b_2),
\end{equation*}
if $k_1=k_2$ and $[\F(\b_1):\F]=[\F(\b_2):\F]$, and if
there exists a simple central $\F$-algebra $\A$ together with 
realizations $[\La,n_i,m_i,\h_i(\b_i)]$ of $(k_i,\b_i)$ 
in $\A$, with $i=1,2$, which intertwine in $\A$.
\end{enumerate}
\end{defi}

This defines an equivalence relation on simple pairs, from the following Proposition:

\begin{prop}[{\cite[Propositions~1.7,~1.9]{BSS}}]
\label{prop:IICsimplepairs}
For $i=1,2$, let $(k,\b_{i})$ be a simple pair over $\F$, and suppose 
these pairs are endo-equivalent. Let $\A$ be a simple central $\F$-algebra and let 
$[\La,n_i,m_i,\h_i(\b_i)]$ be a real\-ization of 
$(k,\b_i)$ in $\A$, for $i=1,2$.
These strata then intertwine in $\A$.

Moreover, if $n_1=n_2$, $m_1=m_2$, and $(F[\h_i(\b_i)],\La)$ have the same embedding type then these strata are conjugate in $\KK(\La)$.
\end{prop}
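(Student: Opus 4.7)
The plan is to reduce to the sound case via \S\ref{SS:passtoddag} and then exploit Lemma~\ref{lem:embedtype} together with the intertwining guaranteed by the definition of endo-equivalence. Passing from $\La$ to $\La^{\ddag}$ and replacing each $\h_i(\b_i)$ by the block-diagonal element $\diag(\h_i(\b_i),\dots,\h_i(\b_i))\in\A^{\ddag}$ produces sound simple strata $[\La^{\ddag},n_i,m_i,\h_i(\b_i)]$ in $\A^{\ddag}$, by~\cite[Lemma~2.17]{BSS}. Since intertwining, and also $\KK(\La^{\ddag})$-conjugacy, descend to the corresponding statements in $\A$ and $\KK(\La)$ upon intersecting with the $(1,1)$-block of $\A^{\ddag}$ (cf.~\cite[Theorem~4.16]{BSS}), we may assume that the given strata $[\La,n_i,m_i,\h_i(\b_i)]$ are themselves sound. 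For the conjugacy assertion, one must check that the conjugator in $\KK(\La^{\ddag})$ can be chosen block-diagonal, which is arranged by working inside the centralizer of the $l$-fold permutation symmetry of $\V^{\ddag}$.

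For Part~1, the definition of endo-equivalence supplies some simple central $\F$-algebra $\A^{0}$ and realizations $[\Ga,n^{0}_i,m^{0}_i,\psi_i(\b_i)]$ that intertwine in $\A^{0}$; the task is to show that this intertwining is independent of the ambient algebra and of the specific embeddings used. I would use Lemma~\ref{lem:embedtype} to produce, in $\A$, embeddings $\ii_i$ of $\F(\b_i)$ whose embedding types relative to $\La$ match those of $\psi_i$ in $\A^{0}$, which is possible because endo-equivalent simple pairs share $[\F(\b_i):\F]$ and the invariants $e(\F(\b_i)/\F)$, $f(\F(\b_i)/\F)$ are forced to coincide by the common intertwining realization in $\A^0$. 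The critical exponents $k_0(\h_i(\b_i),\La)=e_{\h_i(\b_i)}(\La)\,k_\F(\b_i)$ depend only on the simple pair and on $\La$, so the strata $[\La,n_i,m_i,\ii_i(\b_i)]$ have the same combinatorial shape as their counterparts in $\A^{0}$. The intertwining set of a sound simple stratum is determined by this combinatorial data together with the embedding type, so intertwining in $\A^{0}$ propagates to intertwining in $\A$; a final application of Lemma~\ref{lem:embedtype} lets us replace the $\ii_i$ by the given $\h_i$.

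For Part~2, equality of $n_i$, $m_i$ and embedding types yields, directly from the definition of embedding type, an element $g\in\KK(\La)$ with $g\,\F[\h_2(\b_2)]^{\flit}g^{-1}=\F[\h_1(\b_1)]^{\flit}$. Replacing the second stratum by its $g$-conjugate, we may assume both embeddings share the same maximal unramified subextension $\L^{\flit}$. Working inside the centralizer of $\L^{\flit}$ in $\A$, which is a simple central $\L^{\flit}$-algebra carrying an induced $\Oo_{\D}$-lattice sequence, the two strata become realizations of totally ramified simple pairs. The intertwining from Part~1, combined with a Skolem--Noether argument for the totally ramified part inside this centralizer, produces an element of $\KK(\La)$ conjugating one realization to the other.

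The main obstacle is the ``algebra-independence'' step in Part~1: rigorously verifying that the intertwining of realizations of a simple pair depends only on its endo-equivalence class, not on the ambient simple central $\F$-algebra. This is precisely where endo-equivalence earns its name as a well-defined equivalence relation, and it requires careful tracking of the embedding type, the critical exponent and the principal order structure in the passage between $\A$ and $\A^{0}$, using sound reduction together with Lemma~\ref{lem:embedtype} as the essential tools.
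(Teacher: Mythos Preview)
The paper does not prove this proposition at all: it is imported verbatim from~\cite[Propositions~1.7,~1.9]{BSS}, so there is no in-paper argument to compare against. Your proposal must therefore be judged on its own merits, and it has two genuine gaps.

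First, your reduction to the sound case via $\La^{\ddag}$ is unsafe for the conjugacy assertion. The paper explicitly warns in paragraph~\ref{SS:passtoddag} that passing to $\La^{\ddag}$ is legitimate ``provided there is no conjugation involved in the statement''. An element of $\KK(\La^{\ddag})$ conjugating one block-diagonal stratum to another need not itself be block-diagonal, and your appeal to ``working inside the centralizer of the $l$-fold permutation symmetry'' does not fix this: that centralizer is the diagonal copy of $\A$, but you give no mechanism for forcing the conjugator into it. Averaging over the symmetric group is not available, since $\KK(\La^{\ddag})$ is a group, not a module.

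Second, and more seriously, your Part~1 argument is circular. You write that ``the intertwining set of a sound simple stratum is determined by this combinatorial data together with the embedding type, so intertwining in $\A^{0}$ propagates to intertwining in $\A$''. But the statement that intertwining of realizations of endo-equivalent simple pairs is independent of the ambient algebra is precisely what Part~1 asserts; you cannot invoke it as a known fact. The actual machinery used in~\cite{BSS} to establish algebra-independence involves the \emph{interior lifting} and \emph{base change} maps (cf.\ the proof of Lemma~\ref{lem:crux} in this paper, where those tools from~\cite[\S\S5--7]{BSS} reappear), which transfer simple characters and strata functorially between different simple central $\F$-algebras. Your sketch does not engage with this transfer mechanism at all, and Lemma~\ref{lem:embedtype} alone---which only moves embeddings around inside a \emph{fixed} algebra $\A$---cannot bridge the gap between $\A^{0}$ and $\A$.
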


\subsection{}
Let $[\La,n,m,b]$ be a stratum in a simple central $\F$-algebra $\A=\End_\D(\V)$ and let
$[\widetilde\La,n,m,b]$ be the induced stratum in the split central simple $\F$-algebra
$\widetilde\A=\End_\F(\V)$, where $\widetilde\La$ denotes the $\Oo_\F$-lattice sequence defined by $\La$. By \cite[Th\'eor\`eme 2.23]{VS1},
this latter stratum is simple if and only the first one is, and in this 
case they are realizations of the same simple pair over $\F$.

We fix a uniformizer $\w_\F$ of $\F$ and set 
$$
\y=\y(b,\La)=\varpi_\F^{n/g}b^{e/g},
$$
where $e=e(\La|\Oo_\F)$ and $g=\gcd(n,e)$. Set
$$
\overline \y = \overline \y(b,\La) = \y+\PP_1(\widetilde\La),
$$
considered as an element of
$\PP_0(\widetilde\La)/\PP_1(\widetilde\La)$.

\begin{defi} 
With notation as above:
\begin{enumerate}
\item the characteristic polynomial of $\overline \y$ (in $k_\F[X]$) is
called the \emph{characteristic polynomial of the stratum $[\La,n,m,b]$};
\item the minimum polynomial of $\overline \y$ (in $k_\F[X]$) is
called the \emph{minimum polynomial of the stratum $[\La,n,m,b]$}.
\end{enumerate}
\end{defi}

\begin{rema}\label{rem:charminpol}
\begin{enumerate}
\item Since $\overline \y$ depends only on the equivalence class
of the stratum $[\La,n,n-1,b]$ (and the choice of the
uniformizer~$\w_\F$), the same is true of the minimum and
characteristic polynomials.  
\item If $b$ normalizes $\La$ then, by~\cite[Lemma~2.1.9]{Br}, the
element $\y$ depends only on the strict lattice sequence whose image is the image of
$\La$; hence the same is true of the minimum and characteristic
polynomials.
\item The characteristic polynomial of a stratum may
also be computed as the reduction modulo $\p_\F$ of the characteristic
polynomial of $\y$ in $\widetilde\A$; of course, the same is
\emph{not} true for the minimum polynomial.
\end{enumerate}
\end{rema}

\begin{prop}\label{prop:simpleminpol}
Let $[\La,n,n-1,b]$ be a stratum in $\A$. Then
$[\La,n,n-1,b]$ is equivalent to a simple stratum if and only if its
minimum polynomial is irreducible and not~$X$. Moreover, if $[\La,n,n-1,\b]$ is a simple stratum equivalent to $[\La,n,n-1,b]$ then, writing $\E=\F[\b]$, we have
$$
\Oo_\E+\PP_1(\La) = \Oo_\F[\y(b,\La)]+\PP_1(\La).
$$
\end{prop}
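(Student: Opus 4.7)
The plan is to reduce both assertions to the classical case of $\widetilde\A=\End_\F(\V)$ by passing to the induced stratum $[\widetilde\La,n,n-1,b]$, where the results are due to Bushnell and Kutzko~\cite{BK}. By definition of the minimum polynomial via the reduction $\overline\y\in\PP_0(\widetilde\La)/\PP_1(\widetilde\La)$, the minimum polynomials of $[\La,n,n-1,b]$ and $[\widetilde\La,n,n-1,b]$ coincide. Moreover, \cite[Th\'eor\`eme~2.23]{VS1}, recalled in the paragraph preceding the statement, asserts that, for any $b'\in\A$, the stratum $[\La,n,n-1,b']$ is simple in $\A$ if and only if $[\widetilde\La,n,n-1,b']$ is simple in $\widetilde\A$.

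For the first assertion, the direct implication is immediate from these observations: if $[\La,n,n-1,b]\sim[\La,n,n-1,\b]$ with the right-hand side simple, then $b-\b\in\PP_{1-n}(\La)\subseteq\PP_{1-n}(\widetilde\La)$, so the induced strata are equivalent in $\widetilde\A$ and $[\widetilde\La,n,n-1,\b]$ remains simple, whence the minimum polynomial is irreducible and distinct from $X$ by the split case. For the converse, the split case of~\cite{BK} provides a simple $\b\in\widetilde\A$ giving an equivalence $[\widetilde\La,n,n-1,b]\sim[\widetilde\La,n,n-1,\b]$; the crux is to see that $\b$ can be taken in $\A$. The plan here is to inspect the classical construction, which produces $\b$ as a polynomial in $b$ with coefficients in $\Oo_\F$: since $b\in\A$, this gives $\b\in\Oo_\F[b]\subseteq\A$, and then $[\La,n,n-1,\b]$ is simple in $\A$ by \cite[Th\'eor\`eme~2.23]{VS1} and equivalent to $[\La,n,n-1,b]$, as required.

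For the second assertion, I would first reduce to the case $b=\b$. From $b-\b\in\PP_{1-n}(\La)$ and the telescoping identity
$$
b^{e/g}-\b^{e/g}=\sum_{i=0}^{e/g-1}b^{\,i}(b-\b)\b^{\,e/g-1-i},
$$
together with the estimates $\v_\La(b^i)\ge -in$, $\v_\La(b-\b)\ge 1-n$ and $\v_\La(\w_\F^{n/g})=en/g$, one deduces $\y(b,\La)-\y(\b,\La)\in\PP_1(\La)$, so $\Oo_\F[\y(b,\La)]+\PP_1(\La)=\Oo_\F[\y(\b,\La)]+\PP_1(\La)$, and it suffices to treat the simple stratum $[\La,n,n-1,\b]$. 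There the inclusion $\Oo_\F[\y(\b,\La)]+\PP_1(\La)\subseteq\Oo_\E+\PP_1(\La)$ is immediate from $\y(\b,\La)\in\Oo_\E$. For the reverse inclusion I apply the split version of the identity, valid in $\widetilde\A$ by the classical theory: any $x\in\Oo_\E$ may be written $x=p(\y(\b,\La))+z$ with $p\in\Oo_\F[X]$ and $z\in\PP_1(\widetilde\La)$, and since $x$ and $p(\y(\b,\La))$ lie in $\A$, so does $z$, whence $z\in\A\cap\PP_1(\widetilde\La)=\PP_1(\La)$.

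The main obstacle is the converse of the first assertion: confirming that the simple $\b$ equivalent to $b$ produced in the split case always admits a representative in $\A$. One either traces through the construction of~\cite{BK} to verify that $\b$ lies in $\Oo_\F[b]$, or, alternatively, one repeats the classical argument directly in $\A$, noting that all polynomial and lattice invariants entering the proof depend only on $\widetilde\La$ so that no essentially new input is needed beyond the simple-stratum machinery of~\cite{VS1}.
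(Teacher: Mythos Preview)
Your treatment of the forward implication and of the second assertion is correct and close to the paper's argument: the paper also passes to $[\widetilde\La,n,n-1,\b]$ to see that $\overline\y$ lies in $k_\E^\times$, and for the identity $\Oo_\E+\PP_1(\La)=\Oo_\F[\y(b,\La)]+\PP_1(\La)$ it uses exactly the observation you make, that $\y(\b,\La)+\p_\E$ generates $k_\E/k_\F$ and $\y(b,\La)\equiv\y(\b,\La)\pmod{\PP_1(\La)}$.

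The converse, however, is where the content of the proposition lies, and here your argument has a real gap. You assert that the Bushnell--Kutzko construction in $\widetilde\A$ produces $\b$ as an element of $\Oo_\F[b]$, but this is not something one can read off from~\cite{BK}: the split construction proceeds by first conjugating $b$ into a standard form and then producing a minimal element there, and there is no reason the outcome should lie in the commutative subalgebra $\F[b]$. Indeed $\F[b]$ need not be a field at all (take $b=\b+c$ with $c\in\PP_{1-n}(\La)$ generic), and it may contain no element generating a field of the required degree over $\F$. So the passage ``since $b\in\A$, this gives $\b\in\Oo_\F[b]\subseteq\A$'' is not justified, and your fallback suggestion that one can ``repeat the classical argument directly in $\A$'' with ``no essentially new input'' underestimates what is involved.

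The paper's proof of the converse does work directly in $\A$, but it is not a transcription of~\cite{BK}: it invokes Broussous's theory of minimal strata for $\GL_m(\D)$~\cite{Br}. One first reduces to $\La$ strict (using that both hypotheses force $b$ to normalise $\La$). Then, in the easier case where the irreducible $\overline f(X)\in k_\F[X]$ remains irreducible in $k_\D[X]$, one conjugates $[\La,n,n-1,b]$ by an element of $\U(\La)$ into Broussous's $\overline\g$-\emph{standard form}~\cite[Definition~3.2.4, Proposition~3.2.5]{Br}, and then the explicit construction of~\cite[\S3.2]{Br} yields a minimal $\b\in\A$ with $[\La,n,n-1,\b]$ equivalent to the conjugated stratum. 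The general case, where $\overline f$ factors over $k_\D$, is reduced to this one via~\cite[\S3.3]{Br}. This is genuine extra input specific to the non-split situation, and it is precisely the ``main obstacle'' you identify but leave unresolved.
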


\begin{proof} Note first that both conditions imply that the $b$ normalizes $\La$: if the minimum polynomial of $[\La,n,n-1,b]$ is irreducible and not $X$ then $\overline\y(b,\La)$ is invertible so $\y(b,\La)$ normalizes $\La$, whence so does $b$. Hence, using Remark~\ref{rem:charminpol}(ii), we may (and do) assume in the proof that $\La$ is strict. Also, the final assertion is clear since, by the minimality of $\b$, the element $\y(\b,\La)+\p_\E$ generates the extension $k_\E/k_\F$, and $\y(\b,\La)+\PP_1(\La)=\y(b,\La)+\PP_1(\La)$.

Suppose $[\La,n,n-1,b]$ is equivalent to a simple stratum
$[\La,n,n-1,\b]$ and put $\E=\F[\b]$. Then $[\widetilde\La,n,n-1,\b]$ is
also simple; in particular, $\overline
\y(b,\La)=\overline \y(\b,\La)$ is a non-zero element of $k_\E$ in
$\PP_0(\widetilde\La)/\PP_1(\widetilde\La)$ so has irreducible minimum
polynomial not $X$. 

For the converse, suppose $[\La,n,n-1,b]$ has irreducible minimum
polynomial $\overline f(X)\in k_\F[X]$ and put $\d=\deg(\overline
f(X))$. Since $[\La,n,n-1,b]$ has characteristic polynomial which is a
power of $\overline f(X)$, it is non-split fundamental. 
Now the proof follows that of~\cite[Theorem~3.2.1]{Br} and we only
sketch the difference so this proof should be read alongside
\emph{loc.\ cit.} -- in particular, we will refer to notations used
in the proofs there.

Following the ideas of~\cite[\S3]{Br}, we treat first the simpler case
when $\overline f(X)$ is also irreducible as an element of $k_\D[X]$
-- that is, when $d$ is coprime to $\d$. We fix $\L/\F$ a maximal
unramified subfield of $\D$, so that $k_\L=k_\D$.

Let $f(X)\in\Oo_\F[X]$ be any monic polynomial which reduces modulo $\p_\F$ to give $\overline f(X)$. We choose a matrix $\overline\gamma\in\M_\d(k_\F)$ with minimum polynomial $\overline f(X)$. In~\cite[Definition~3.2.4]{Br}, Broussous defines the notion of \emph{$\overline\g$-standard form}, which we will use here. Since $[\La,n,n-1,b]$ has characteristic polynomial which is a power of $\overline f(X)$, it is non-split fundamental and, by~\cite[Proposition~3.2.5]{Br} there is a $u\in\U(\La)$ such that $[\La,n,n-1,ubu^{-1}]$ is equivalent to a stratum in $\overline\g$-standard form. Since the property of being equivalent to a simple stratum is unchanged by conjugation in $\U(\La)$, we may as well assume $[\La,n,n-1,b]$ is itself in $\overline\g$-standard form.

Now we follow the proof of~\cite[Theorem~3.2.1]{Br} in this case. In \emph{op.~cit.}~p.221, an element $\b$ is defined and the proof of \emph{op.~cit.}~Proposition~3.2.8 shows that there is $u\in\U(\La)$ such that $[\La,n,n-1,\b]$ is equivalent to $[\La,n,n-1,ubu^{-1}]$. (More precisely,~\cite[Proposition~3.2.9]{Br} is applied to the $\Oo_\L$-order $\M_{n_i}(\Oo_\L)$, in the notation there.) Moreover, $\b$ is minimal over $\F$ by~\cite[Lemma~3.2.10]{Br} so $[\La,n,n-1,b]$ is equivalent to the simple stratum $[\La,n,n-1,u^{-1}\b u]$.

Finally suppose we are in the general case where $\overline f(X)$ is not irreducible in $k_\L[X]$ and we decompose $\overline f(X)=\overline p_0(X)\cdots\overline p_{s-1}(X)$ into irreducibles. Now we follow~\cite[\S3.3]{Br} to reduce to previous case. The proof is essentially identical (but easier) so we will not repeat it -- the only point is that, in~\cite[Proposition~3.3.2]{Br}, the orders $\mathfrak A_0$ and $\mathfrak B_0$ can be taken to be equal, by the case where $\overline f(X)$ is irreducible, and then the lattice sequences denoted $\Mm^1$ and $\Ll$ are equal, which implies that $\Mm$ is the lattice sequence here denoted $\La$ and, in the notation of~\cite[Lemma~3.3.10]{Br}, we have $\mathfrak A=\mathfrak A'$.
\end{proof}

For $j=1,\ldots,r$, let $[\La^j,n,n-1,\b_j]$ be a simple stratum in
$\A^j=\End_\D(\V^j)$ with $e(\La^j|\Oo_\F)=e$. 
Put $\V=\V^1\oplus\cdots\oplus\V^r$, and set
$\La=\La^1\oplus\cdots\oplus\La^r$, a lattice sequence in $\V$ of
$\Oo_\F$-period $e$. Write $\A=\End_\D(\V)$ and denote by $\e^j$ the
idempotents in $\PP_0(\La)$ corresponding to the decomposition of
$\V$. We put $\b=\sum_{j=1}^r \b_j$. 
Then $[\La,n,n-1,\b]$ is a stratum in $\A$.

\begin{coro}\label{cor:minpol}
With notation as above, suppose the strata $[\La^j,n,n-1,\b_j]$ are all
equivalent to simple (or null) strata. Then they have the same
minimum polynomial if and only if 
$[\La,n,n-1,\b]$ is equivalent to a simple (or null) stratum.
\end{coro}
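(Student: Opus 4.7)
The plan is to compute the minimum polynomial of $[\La, n, n-1, \b]$ in terms of those of the summand strata $[\La^j, n, n-1, \b_j]$ and then apply Proposition~\ref{prop:simpleminpol}. Since each $\b_j$ lies in the block $\A^j$ of the decomposition $\V = \bigoplus_j \V^j$, the power $\b^{e/g}$ decomposes without cross terms as $\sum_j \b_j^{e/g}$, and hence, setting $g = \gcd(n,e)$, one has
\[
\y(\b, \La) = \w_\F^{n/g}\, \b^{e/g} = \sum_{j=1}^r \y(\b_j, \La^j).
\]
The block decomposition of $\widetilde\A = \End_\F(\V)$ induced by $\V = \bigoplus_j \V^j$ descends to the quotient $\PP_0(\widetilde\La)/\PP_1(\widetilde\La)$, and in that quotient $\overline\y(\b, \La)$ is block-diagonal with diagonal blocks $\overline\y(\b_j, \La^j)$. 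The minimum polynomial of such a block-diagonal element is the least common multiple of the minimum polynomials of its diagonal blocks.

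Combining this with Proposition~\ref{prop:simpleminpol} and the hypothesis that each $[\La^j, n, n-1, \b_j]$ is equivalent to a simple or null stratum, I conclude that each summand minimum polynomial $m_j(X)$ is either irreducible and not $X$ (the simple case) or equal to $X$ (the null case, equivalent to $\b_j \in \PP_{1-n}(\La^j)$). Writing $m(X) = \mathrm{lcm}_j\, m_j(X)$ for the minimum polynomial of the sum stratum, the forward implication runs as follows: if all the $m_j(X)$ coincide with a common $\overline f(X)$, then $m(X) = \overline f(X)$, and either $\overline f(X)$ is irreducible and not $X$ -- whence $[\La, n, n-1, \b]$ is equivalent to a simple stratum by Proposition~\ref{prop:simpleminpol} -- or $\overline f(X) = X$, in which case every $\b_j$ lies in $\PP_{1-n}(\La^j)$, so $\b \in \PP_{1-n}(\La)$ and the stratum is equivalent to a null stratum. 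For the converse, if $[\La, n, n-1, \b]$ is equivalent to a simple or null stratum, then $m(X)$ is irreducible, and every $m_j(X)$, being a degree-at-least-one divisor of an irreducible polynomial, must equal $m(X)$; hence the $m_j(X)$ all agree.

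The step I expect to require the most care is verifying cleanly that $\overline\y(\b, \La)$ is block-diagonal in $\PP_0(\widetilde\La)/\PP_1(\widetilde\La)$ with the expected diagonal blocks, since one must check that this quotient respects the decomposition $\widetilde\La = \bigoplus_j \widetilde\La^j$ and that no off-diagonal terms intrude from the $(e/g)$-th power of $\b$; once that is in hand, the rest of the argument is the standard fact about minimum polynomials of block-diagonal elements combined with Proposition~\ref{prop:simpleminpol} and elementary divisibility in $k_\F[X]$.
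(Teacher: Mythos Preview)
Your proof is correct and follows essentially the same route as the paper's: both observe that $\y(\b,\La)$ is block-diagonal with blocks $\y(\b_j,\La^j)$, compute the minimum polynomial of the direct sum accordingly, and invoke Proposition~\ref{prop:simpleminpol}. Your write-up is in fact more careful than the paper's --- you correctly identify the minimum polynomial of a block-diagonal element as the \emph{lcm} of the block minimum polynomials (the paper's text says ``gcd'', which appears to be a slip), and you spell out the divisibility argument for the converse direction.
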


\begin{proof} Writing $\A=\bigoplus_{i,j}\End_\D(\V^j,\V^i)$, it is clear that
$\y(\b,\La)$ is block diagonal of the form
$\diag(\y(\b_1,\La^1),\ldots,\y(\b_r,\La^r))$. In particular, the minimum
polynomial of $\overline \y(\b,\La)$ is the $\gcd$ of the minimum polynomials
of $\overline \y(\b_j,\La^j)$. The result is now immediate from
Proposition~\ref{prop:simpleminpol}, with the case of null strata coming from the case where the minimum polynomial is $X$.
\end{proof}

\begin{coro}\label{cor:minpolpair}
Let $(k,\b)$ be a simple pair over $\F$ and let $[\La^1,n_1,m_1,\varphi_1(\b)]$ be
a realization in some simple central $\F$-algebra $\A^1$. 
The minimum polynomial of $[\La^1,n_1,m_1,\varphi_1(\b)]$ depends only on  the
endo-equivalence class of the pair $(-k_\F(\b)-1,\b)$.
\end{coro}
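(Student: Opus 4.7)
The plan is to combine Remark~\ref{rem:charminpol}(i) (to eliminate the dependence on $m_1$), Lemma~\ref{lem:embedtype} (to match embedding types in a common ambient algebra), and the ``moreover'' clause of Proposition~\ref{prop:IICsimplepairs} (to force conjugacy, and hence equality of minimum polynomials). First, by Remark~\ref{rem:charminpol}(i), the minimum polynomial of $[\La^1,n_1,m_1,\varphi_1(\b)]$ equals that of $[\La^1,n_1,n_1-1,\varphi_1(\b)]$ and is therefore insensitive to $m_1$; enlarging $m_1$ to $-k_\F(\b)\cdot e_{\varphi_1(\b)}(\La^1)-1$ reduces us to the case of a realization of the \emph{maximal} simple pair $(-k_\F(\b)-1,\b)$. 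It thus suffices to show: given realizations $[\La^i,n_i,m_i,\varphi_i(\b_i)]$ in simple central $\F$-algebras $\A^i$ of two endo-equivalent maximal pairs $(-k_\F(\b_i)-1,\b_i)$ for $i=1,2$, the corresponding minimum polynomials agree.

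The strategy is then to bring both realizations into a common simple central $\F$-algebra $\A$ with identical data. I would choose $\A$ large enough (for instance, split of sufficiently large dimension) to contain $\F$-embeddings of both $\F(\b_1)$ and $\F(\b_2)$; fix an $\F(\b_1)$-pure $\Oo_\D$-lattice sequence $\La$ on the simple $\A$-module and an embedding $\h_1\colon\F(\b_1)\hookrightarrow\A$; and then apply Lemma~\ref{lem:embedtype} to produce an embedding $\h_2\colon\F(\b_2)\hookrightarrow\A$ such that $(\h_2(\F(\b_2)),\La)$ has the same embedding type as $(\h_1(\F(\b_1)),\La)$. This is legitimate because endo-equivalent simple pairs share the ramification index and residual degree of their generating extensions. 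Once all parameters match, the ``moreover'' clause of Proposition~\ref{prop:IICsimplepairs} forces the two strata $[\La,n,m,\h_i(\b_i)]$ to be conjugate in $\KK(\La)$; since $\KK(\La)$-conjugation transports $\overline\y(\cdot,\La)$ to a conjugate element of $\PP_0(\widetilde\La)/\PP_1(\widetilde\La)$, the two minimum polynomials must coincide.

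The main obstacle will be arranging that the two strata have the same $n$. With matching embedding types, one has $n_i = e_{\h_i(\b_i)}(\La)\cdot(-v_{\F(\b_i)}(\b_i))$, so this reduces to the equality $v_{\F(\b_1)}(\b_1)=v_{\F(\b_2)}(\b_2)$ for endo-equivalent maximal pairs. I would extract this from the intertwining half of Proposition~\ref{prop:IICsimplepairs}: two intertwining pure strata with a common lattice sequence necessarily share the $\La$-valuation of their leading term, so $n_1=n_2$. Finally, taking $\b_1=\b_2$ in the above argument also shows that the minimum polynomial is independent of the particular realization of a fixed simple pair, which is the remaining content of the statement.
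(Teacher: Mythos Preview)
Your argument has a genuine gap: you construct \emph{new} realizations $[\La,n,m,\h_i(\b_i)]$ inside a common algebra~$\A$ and show (correctly, via Proposition~\ref{prop:IICsimplepairs}) that these two are conjugate and hence share a minimum polynomial. But nothing in your outline connects the minimum polynomial of these auxiliary realizations back to the minimum polynomials of the \emph{original} realizations $[\La^i,n_i,m_i,\varphi_i(\b_i)]$ in $\A^i$. That connection is precisely the ``independence of realization'' you are trying to prove, so invoking it here (even implicitly, by saying ``bring both realizations into~$\A$'') is circular. Your final sentence, specializing to $\b_1=\b_2$, suffers from the same problem: it compares two realizations on the \emph{same} lattice sequence~$\La$, not the two given realizations on $\La^1$ and $\La^2$.

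The paper closes this gap by a different mechanism. It first passes to the split algebras $\widetilde\A^i$ (legitimate because the minimum polynomial is computed there anyway), rescales so the $\La^i$ have a common period, and then forms the \emph{direct sum} $\La=\La^1\oplus\La^2$ with the diagonal embedding $\varphi=\varphi_1+\varphi_2$. The resulting stratum $[\La,n,m,\varphi(\b)]$ is itself a realization of the simple pair, hence simple, so $[\La,n,n-1,\varphi(\b)]$ is equivalent to a simple stratum; Corollary~\ref{cor:minpol} then forces the block strata $[\La^j,n,n-1,\varphi_j(\b)]$ to have equal minimum polynomials. This direct-sum trick is what ties the two original realizations together, and it is the step your proposal is missing. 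Once independence of realization is in hand, the endo-equivalence step proceeds exactly as you describe, via conjugacy from Proposition~\ref{prop:IICsimplepairs}.
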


\begin{proof} First note that Corollary~\ref{cor:minpol} essentially
says that the minimum polynomial is independent of the realization.
For suppose $[\La^2,n_2,m_2,\varphi_2(\b)]$ is another realization in a simple
central $\F$-algebra $\A^2$. Since the minimum polynomial depends only
on the induced strata in $\widetilde\A^1$ and $\widetilde\A^2$, we may as
well suppose that both algebras are split -- that is,
$\A^j=\End_\F(\V^j)$, for some $\F$-vector space $\V^j$,
$j=1,2$. Moreover, by scaling we may assume that $\La^1$ and $\La^2$
have the same period so that $n_1=n_2=n$. Put $m=\max\{m_1,m_2\}$.

Now set $\V=\V^1\oplus\V^2$ and use the notation of
Corollary~\ref{cor:minpol}; also let $\varphi=\varphi_1+\varphi_2$ denote the
diagonal embedding of $\F[\b]$ in $\A$. The stratum
$[\La,n,m,\varphi(\b)]$ is then a realization of the simple pair $(k,\b)$
so it is simple and $[\La,n,n-1,\varphi(\b)]$, being pure, is equivalent
to a simple stratum. Hence, by Corollary~\ref{cor:minpol}, the
strata $[\La^j,n,n-1,\varphi_j(\b)]$ have the same minimum polynomial.

Finally, suppose $(k,\g)$ is a simple pair endo-equivalent to
$(k,\b)$. Let $[\La,n,m,\varphi(\b)]$ and $[\La,n,m,\rho(\g)]$ be
realizations in some split simple central $\F$-algebra
$\A=\End_\F(\V)$. By Proposition~\ref{prop:IICsimplepairs},
these strata are conjugate by some $u\in\KK(\La)$ so, by
conjugating the embedding $\rho$, we may assume that
$[\La,n,n-1,\varphi(\b)]$ is equivalent to $[\La,n,n-1,\rho(\g)]$. In
particular, they have the same minimum polynomial. 
\end{proof}

\subsection{}
The following is a generalization of~\cite[Lemma~2.4.11]{BK},~\cite[Lemma~1.9]{G}:

\begin{lemm}\label{lem:scalar}
Let $[\La,n,n-1,b]$ be a stratum in $\A$. It is intertwined by every
element of~$\G$ if and only if $(b+\PP_{1-n}(\La))\cap\F\ne\emptyset$.
\end{lemm}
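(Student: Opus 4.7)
The proof splits into two directions.

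$(\Leftarrow)$ Suppose $c\in(b+\PP_{1-n}(\La))\cap\F$. Since $c$ lies in the centre $\F$ of $\A$, we have $gcg^{-1}=c$ for every $g\in\G$. Thus $c$ belongs both to $b+\PP_{1-n}(\La)$ and to $g(b+\PP_{1-n}(\La))g^{-1}$, so these cosets share the element $c$ and $g$ intertwines the stratum.

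$(\Rightarrow)$ Suppose every $g\in\G$ intertwines $[\La,n,n-1,b]$. A key observation is that intertwining is invariant under translating $b$ by a central scalar: for any $c\in\F$ and $g\in\G$,
$$
g(b-c)g^{-1}-(b-c)\;=\;gbg^{-1}-b,
$$
and the set $\PP_{1-n}(\La)+g\PP_{1-n}(\La)g^{-1}$ does not depend on $c$. Hence the intertwining set of $[\La,n,n-1,b]$ coincides with that of $[\La,n,n-1,b-c]$, and the statement reduces to showing $b\in\F+\PP_{1-n}(\La)$. Specializing the intertwining condition to $g\in\U(\La)$—which normalises each $\PP_k(\La)$—it sharpens to $gbg^{-1}\equiv b\pmod{\PP_{1-n}(\La)}$; equivalently, the image $\bar b\in\PP_{-n}(\La)/\PP_{1-n}(\La)$ is fixed by the conjugation action of $\U(\La)/\U_1(\La)$, a copy of the unit group of the residue algebra $\overline{\AA}:=\PP_0(\La)/\PP_1(\La)$.

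I would then pass to $\La^\ddag$ as in~\S\ref{SS:passtoddag} to reduce to the sound case: intertwining of $b$ by every $g\in\G$ translates to intertwining of $b^\ddag=\diag(b,\ldots,b)$ by every block-diagonal element of $\G^l\subseteq\G^\ddag$, which is already enough to run the centralizer computation block-by-block in the residue algebra. In this sound setting $\overline{\AA^\ddag}\simeq\prod_i\M_{d_i}(k_\D)$, and by Grabitz~\cite[Lemma~1.9]{G} (generalising~\cite[Lemma~2.4.11]{BK} to sound strata), the hypothesis forces $\bar b^\ddag$ to be scalar in $k_\D$ on each residue block. Promoting this to a $k_\F$-scalar uses intertwining by an element of $\KK(\La)\setminus\U(\La)$ of nontrivial $\La$-valuation: such an element cyclically permutes the residue blocks with a Frobenius twist on $k_\D$, and the combined fixed-point constraints force the scalar to be Frobenius-invariant, hence to lie in $k_\F$. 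Intersecting with $\A$ then yields $c\in\F$ with $b-c\in\PP_{1-n}(\La)$, as required.

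The main obstacle will be checking that the restricted intertwining of $b^\ddag$ by only the block-diagonal subgroup of $\G^\ddag$ (rather than all of $\G^\ddag$) provides enough information to invoke Grabitz's centralizer result; this should succeed because the conclusion is local to each residue block and the block-diagonal action already achieves the needed centralization. A secondary technical point is tracking the Frobenius-twisted bimodule structure on the residue blocks in the non-split case $\D\ne\F$, which controls the passage from $k_\D$-scalars to $k_\F$-scalars.
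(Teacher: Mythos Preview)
The $(\Leftarrow)$ direction is fine. For $(\Rightarrow)$, your plan to pass to $\La^\ddag$ and invoke Grabitz's sound-case result has a genuine gap, and it is precisely the obstacle you yourself flag: the hypothesis ``$b$ is intertwined by every $g\in\G$'' does \emph{not} lift to ``$b^\ddag$ is intertwined by every element of $\mult{(\A^\ddag)}$''. You only obtain intertwining by the block-diagonal subgroup $\G^l$, and this is not enough to apply~\cite[Lemma~1.9]{G}, whose hypothesis is intertwining by the full group. Your proposed workaround---``run the centralizer computation block-by-block''---is circular: restricting to a single diagonal block recovers exactly the original (non-sound) hypothesis on $b$ in $\A$, so you would need the very lemma you are proving. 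This is an instance of the warning in paragraph~\ref{SS:passtoddag}: the $\La^\ddag$ technique is safe only when no conjugation by arbitrary elements of $\G$ appears in the statement.

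The paper's proof avoids $\La^\ddag$ entirely and argues directly with $\La$ (no strictness or soundness assumed). First one shows $e\mid n$, where $e=e(\La|\Oo_\D)$: when $e\nmid n$ one exhibits an explicit $g\in\PP_0(\La)\cap\G$ inducing the identity on $\La(0)/\La(1)$ and zero on the other residue pieces, and checks that such $g$ cannot intertwine the stratum. With $t=-n/e$ one then studies the coset $b\w_\D^{-t}+\PP_1(\La)$, which is intertwined by every $g$ commuting with $\w_\D$. Elementary matrices (with respect to a suitable basis) commute with $\w_\D$ and force the coset into $\Oo_\D+\PP_1(\La)$; intertwining by $\w_\D$ itself (which acts as Frobenius on $k_\D$) forces it into $\Oo_\F+\PP_1(\La)$; and finally intertwining by $\Oo_\L^\times$, for $\L/\F$ maximal unramified in $\D$, forces $d\mid t$, so that $\w_\D^t\in\F^\times$ and $b\in\F+\PP_{1-n}(\La)$. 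The argument is elementary but makes essential use of specific elements of $\G$ (elementary matrices, $\w_\D$, units of $\L$) that are simply not visible through the block-diagonal embedding into $\A^\ddag$.
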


\begin{proof} The proof follows the same scheme as that of~\cite[Lemma~2.4.11]{BK} (see \emph{op.\ cit.} pp.77--78) so we only sketch the argument. Suppose $[\La,n,n-1,b]$ is intertwined by every
element of~$\G$. If $b\in\PP_{1-n}(\La)$ then $0\in(b+\PP_{1-n}(\La))\cap\F$ so we assume $b\not\in\PP_{1-n}(\La)$. Then $b$ defines a non-zero map $\overline\b$ in
$$
\PP_{-n}(\La)/\PP_{1-n}(\La) = \bigoplus_{i=0}^{e-1}\Hom_{k_\D}(\La(i)/\La(i+1),\La(i-n)/\La(i-n+1)),
$$
where $e=e(\La|\Oo_\D)$ is the $\Oo_\D$-period of $\La$. (Note that there is, in general, redundancy in this sum: the spaces $\Hom_{k_\D}(\La(i)/\La(i+1),\La(i-n)/\La(i-n+1))$ may be $0$.) 

Since $\overline\b$ is non-zero, by moving $\La$ in its translation
class we can suppose that it defines a non-zero map in
$\Hom_{k_\D}(\La(0)/\La(1),\La(-n)/\La(1-n))$. If $e\nmid n$ then we
can find $g\in\G\cap\PP_0(\La)$ such that $g$ induces the identity map
on $\La(0)/\La(1)$ but the zero map on $\La(i)/\La(i+1)$, for $1\le
i\le e-1$. But then $\overline{gb}$ is zero on $\La(0)/\La(1)$, while
$\overline{bg}$ coincides with $\overline b$ on $\La(0)/\La(1)$, so is
non-zero, which contradicts the assumption that $g$ intertwines
$[\La,n,n-1,b]$.

Thus $e$ divides $n$ and we put $t=-n/e$. Fix $\L/\F$ a maximal
unramified subfield of $\D$, and $\w_\D$ a uniformizer of $\D$ which
normalizes $\L$ (so acts via conjugation as a generator of
$\Gal(\L/\F)$) and such that $\w_\D^d=\W_\F$.
Then the coset $b\w_\D^{-t}+\PP_1(\La)$ is intertwined by every
$g\in\G$ which commutes with $\w_\D$. In particular, since elementary
matrices (with respect to a suitable basis) commute with $\w_\D$, we
find that $(b\w_\D^{-t}+\PP_1)\cap\Oo_\D\ne\emptyset$ and, since
conjugation by $\w_\D$ acts by Frobenius on $k_\D$, the fact that
$\w_\D$ intertwines implies that
$(b\w_\D^{-t}+\PP_1)\cap\Oo_\F\ne\emptyset$. Thus $b\equiv
\w_\D^t\l\pmod{\PP_{1-n}(\La)}$, for some
$\l\in\Oo_\F^\times$. Finally, the fact that every element of
$\Oo_\L^\times$ intertwines the stratum implies that conjugation by
$\w_\D^t$ acts trivially on $k_\D$, so $d$ divides $t$ and
$\w_\D^t\l\in\F$, as required.

The converse is trivial.
\end{proof}

\subsection{}
Let $\A$ be a simple central $\F$-algebra and $\V$ be a simple left
$\A$-module. Let $[\La,n,m,\b]$ be a simple stratum, set
$\E=\E_\b=\F[\b]$, and let $\B=\B_\b$ denote the $\A$-centralizer of
$\E$. We identify $\widetilde\A=\End_\F(\V)$ with
$\A\otimes_\F\End_\A(\V)$. From~\cite[D\'efinition~2.25]{SS4} (see
also~\cite[\S4.2]{Br}) a \emph{tame corestriction relative to $\E/\F$}
on $\A$ is a $(\B,\B)$-bimodule homomorphism $s=s_\b:\A\to\B$ such
that $\widetilde s=s\otimes\id_{\End_\A(\V)}$ is a tame corestriction
relative to $\E/\F$ on $\widetilde\A$, in the sense
of~\cite[Definition~1.3.3]{BK}.

\begin{lemm}[{cf.~\cite[Lemma~2.4.12]{BK}}]
\label{lem:cruxstrata}
Let $[\La,n,m,\b_i]$ be equivalent simple strata. Then, putting
$\E_i=\F[\b_i]$, we
have:
\begin{enumerate}
\item $\Oo_{\E_1}+\PP_1(\La)=\Oo_{\E_2}+\PP_1(\La)$;
\item there are tame corestrictions $s_i$ on $\A$ relative to
  $\E_i/\F$ such that, for all $k\in\ZZ$ and $x\in\PP_k(\La)$,
$$
s_1(x) \equiv s_2(x) \pmod{\PP_{k+1}(\La)};
$$
\item there are prime elements $\varpi_i$ of $\E_i$ such that
  $\varpi_1\U^1(\La)=\varpi_2\U^1(\La)$;
\item the pairs $(\E_i,\La)$ have the same embedding type.
\end{enumerate}
\end{lemm}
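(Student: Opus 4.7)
The plan is to handle (i) by a direct computation with the elements $\y(\b_i,\La)$, to reduce (ii) and (iii) to the sound-stratum case (Grabitz's analogue of~\cite[Lemma~2.4.12]{BK} from~\cite{G}) via the passage to $\La^\ddag$ of \S\ref{SS:passtoddag}, and to treat (iv) separately since that assertion involves conjugation and the $\La^\ddag$ reduction does not apply directly.

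For (i), I would start from $\b_2-\b_1\in\PP_{1-n}(\La)$ and expand $\y(\b_2,\La)=\varpi_\F^{n/g}\b_2^{e/g}=\varpi_\F^{n/g}(\b_1+(\b_2-\b_1))^{e/g}$. Each noncommutative monomial in this expansion other than $\varpi_\F^{n/g}\b_1^{e/g}$ contains at least one factor from $\PP_{1-n}(\La)$ and at most $e/g-1$ factors of $\La$-valuation $-n$, hence has $\La$-valuation at least $1-ne/g$; multiplication by $\varpi_\F^{n/g}\in\PP_{en/g}(\La)$ brings it into $\PP_1(\La)$. Thus $\y(\b_2,\La)\equiv\y(\b_1,\La)\pmod{\PP_1(\La)}$, and combining with $\Oo_{\E_i}+\PP_1(\La)=\Oo_\F[\y(\b_i,\La)]+\PP_1(\La)$ from Proposition~\ref{prop:simpleminpol} yields (i).

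For (ii) and (iii), passing to $\La^\ddag$ produces equivalent sound simple strata $[\La^\ddag,n,m,\b_i]$ in $\A^\ddag$ by~\cite[Lemma~2.17]{BSS}. Grabitz's sound-case results supply tame corestrictions $s_i^\ddag$ on $\A^\ddag$ congruent modulo $\PP_{k+1}(\La^\ddag)$ and prime elements $\varpi_i\in\E_i$ satisfying $\varpi_1\U^1(\La^\ddag)=\varpi_2\U^1(\La^\ddag)$. Restricting $s_i^\ddag$ to the $(1,1)$-block $\A\subseteq\A^\ddag$ yields tame corestrictions $s_i$ on $\A$, and the congruence descends because $\PP_k(\La^\ddag)\cap\A=\PP_k(\La)$, settling (ii). For (iii), the $\varpi_i$ already lie in $\E_i\subset\A$, and the same identity gives $\varpi_1\varpi_2^{-1}\in\U^1(\La^\ddag)\cap\mult\A=\U^1(\La)$. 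Since no conjugation is involved, this is a safe application of the reduction, as explained in \S\ref{SS:passtoddag}.

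The main obstacle is (iv). My approach is to argue directly from (i): by Corollary~\ref{cor:minpolpair} applied to the common simple pair underlying both strata, the two strata share the same minimum polynomial, which forces $e(\E_1/\F)=e(\E_2/\F)$ and $f(\E_1/\F)=f(\E_2/\F)$; in particular $\E_1^\flit$ and $\E_2^\flit$ are abstractly isomorphic. Moreover (i) shows that the images of $\Oo_{\E_1^\flit}$ and $\Oo_{\E_2^\flit}$ in $\AA(\La)/\PP_1(\La)$ coincide, and a Hensel-lifting argument inside the complete local order $\AA(\La)$ then produces an element $u\in\U^1(\La)$ with $u\E_1^\flit u^{-1}=\E_2^\flit$. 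Since $u$ belongs to $\KK(\La)$, the pairs $(\E_1,\La)$ and $(\E_2,\La)$ have the same embedding type, which concludes (iv).
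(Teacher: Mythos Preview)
Your argument for (i) has a genuine gap. Proposition~\ref{prop:simpleminpol} yields
\[
\Oo_{\E}+\PP_1(\La)=\Oo_\F[\y(b,\La)]+\PP_1(\La)
\]
only when $\E=\F[\b]$ for a simple stratum $[\La,n,n-1,\b]$ equivalent to $[\La,n,n-1,b]$, i.e.\ when $\b$ is \emph{minimal over $\F$}. For a simple stratum $[\La,n,m,\b_i]$ with $m<n-1$, the element $\b_i$ need not be minimal: the simple stratum equivalent to $[\La,n,n-1,\b_i]$ is $[\La,n,n-1,\b_i^{(n-1)}]$ with $\F[\b_i^{(n-1)}]$ typically a proper subfield of $\E_i$. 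In that case Proposition~\ref{prop:simpleminpol} only gives $\Oo_{\F[\b_i^{(n-1)}]}+\PP_1(\La)=\Oo_\F[\y(\b_i,\La)]+\PP_1(\La)$, and the image of $k_{\E_i}$ in $\AA(\La)/\PP_1(\La)$ can be strictly larger than that of $k_{\F[\b_i^{(n-1)}]}$. So your congruence $\y(\b_1,\La)\equiv\y(\b_2,\La)$ (which is correct) does not suffice. The paper avoids this by passing, after the $\La^\ddag$ reduction, to the split algebra $\widetilde\A=\End_\F(\V)$ and invoking \cite[Lemma~2.4.12]{BK}, whose proof proceeds by induction along the defining sequence of the stratum---precisely what is needed to track the residue field through the non-minimal steps. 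Since your (iv) rests on (i), it inherits this gap.

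For (ii) and (iii), your appeal to ``Grabitz's sound-case results'' is not supported: \cite{G} does not contain an analogue of \cite[Lemma~2.4.12]{BK} giving compatible tame corestrictions and uniformizers. The paper's route is again to pass to $\widetilde\A$ and use \cite{BK} directly; every tame corestriction on $\widetilde\A$ relative to $\E_i/\F$ has the form $s_i\otimes\id$ for a tame corestriction $s_i$ on $\A$, so one can pull the congruence back. Two further small points: the two strata are realizations of \emph{endo-equivalent} simple pairs, not of a common simple pair, so your phrasing in (iv) is off (though the conclusion $e(\E_1/\F)=e(\E_2/\F)$, $f(\E_1/\F)=f(\E_2/\F)$ is correct); and $\AA(\La)$ is not a local ring, so the Hensel-type conjugacy you need is the content of \cite[Lemma~5.2]{BG} (or its underlying Lemma~2.3.6), which the paper cites explicitly.
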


\begin{proof} We begin by proving (i)-(iii), for which we may assume
that $\La$ is strict by passing first to $\La^\ddag$
(cf.~paragraph~\ref{SS:passtoddag}). To return to $\Lambda$, notice that the
condition in (iii) is equivalent to
$\w_1\equiv\w_2\pmod{\PP_{e+1}(\La)}$, with $e=e(\La|\Oo_{\E_i})$, so
for (i) and (iii) we can simply intersect with $\A$. The same is true
for (ii) since tame corestrictions also decompose by blocks,
by~\cite[Proposition~2.26]{SS4}.

The simple strata $[\widetilde\La,n,m,\b_i]$ in $\widetilde\A$ are
equivalent so, by~\cite[Lemma~2.4.12]{BK}, we have the results
corresponding to (i) and (ii) in $\widetilde\A$, while (iii) is a
by-product of the proof (see also~\cite[Lemma~5.2]{BKcc} and its
proof). Intersecting with $\A$ gives the result here.

In the case of strict sequences, (iv) is given
by~\cite[Lemma~5.2]{BG}. Moreover, writing $\Ll$ for the strict
lattice sequence with the same image as $\La$, since
$\PP_0(\La)=\PP_0(\Ll)$ and $\PP_1(\La)=\PP_1(\Ll)$, the same proof
(using \emph{op.\ cit.}~Lemma~2.3.6) works in the general case to show
that the maximal unramified subextensions of $\E_i$ are conjugate in
$\U^1(\Ll)=\U^1(\La)$.
\end{proof}

\subsection{}
Now let $\V_\E$ be a simple left $\B$-module, let $\D_\E$ be the
algebra opposite to $\End_\B(\V_\E)$, and let  $\Ga=\Ga_\b$ be the
unique (up to translation) $\Oo_{\D_\E}$-lattice sequence $\V_\E$ such
that $\PP_k(\La)\cap\B = \PB_k(\Ga)$, for all $k\in\ZZ$.

\begin{defi}[{\cite[D\'efinition~3.21]{SS4}}] A \emph{derived stratum} of $[\La,n,m,\b]$ is a stratum of the form $[\Ga,m,m-1,s(b)]$, for some $b\in\PP_{-m}(\La)$ and some tame corestriction $s$ on $\A$ relative to $\E/\F$.
\end{defi}

The following result is a slight strengthening of~\cite[Proposition~3.30]{SS4}: 

\begin{prop}
\label{prop:refinement}
Let $[\La,n,m,\b]$ be a simple stratum and let $b\in\PP_{-m}(\La)$ be
such that the derived stratum $[\Ga,m,m-1,s(b)]$ is equivalent to a simple (or null) stratum $[\Ga,m,m-1,c]$. Then there is a simple stratum $[\La,n,m-1,\b']$ equivalent to $[\La,n,m-1,\b+b]$ and, moreover, for any such stratum, writing $\E'=\F[\b']$ and $\E_1=\F[\b,c]$, we have:
\begin{enumerate}
\item $e(\E'/\F)=e(\E_1/\F)$, $f(\E'/\F)=f(\E_1/\F)$ and
$k_0(\b',\La)=\begin{cases} k_0(\b,\La) &\hbox{ if }c\in\E, \\
-m &\hbox{ otherwise;} \end{cases}$

\medskip
\item $
\Oo_{\E'}+\PP_1(\La) = \Oo_{\E_1}+\PP_1(\La) = \Oo_{\E}[\y(s(b))] + \PP_1(\La).
$
\end{enumerate}
\end{prop}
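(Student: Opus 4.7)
The plan is to pass immediately to the sound case via the $\ddag$-construction of~\S\ref{SS:passtoddag}: since the statement involves no conjugation and all relevant objects (the orders, the radicals $\PP_k(\La)$, the tame corestriction $s$, the element $\y$, and the field $\E_1$) respect the block decomposition, it suffices to prove the result for $[\La^\ddag,n,m,\b]$ and then restrict to the $(1,1)$-block. The existence of a simple stratum $[\La,n,m-1,\b']$ equivalent to $[\La,n,m-1,\b+b]$, together with the whole of part~(i), is then exactly~\cite[Proposition~3.30]{SS4}; the new content is part~(ii).

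For the second equality in~(ii), I would apply Proposition~\ref{prop:simpleminpol} in the simple central $\E$-algebra $\B$, with $\Ga$ and a uniformizer $\w_\E$ of $\E$ in place of $\La$ and $\w_\F$. Since $[\Ga,m,m-1,s(b)]$ is equivalent to the simple (or null) stratum $[\Ga,m,m-1,c]$ and $\E_1=\E[c]$, Proposition~\ref{prop:simpleminpol} yields
$$
\Oo_{\E_1}+\PP_1(\Ga) = \Oo_\E[\y(s(b),\Ga)]+\PP_1(\Ga)
$$
(with the null case read as $\E_1=\E$). Adjoining $\PP_1(\La)$ to both sides inside $\A$, using $\PP_1(\Ga)=\PP_1(\La)\cap\B\subseteq\PP_1(\La)$, gives the second equality of~(ii).

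For the first equality, the strategy is to exhibit one specific simple stratum $[\La,n,m-1,\b'_0]$ equivalent to $[\La,n,m-1,\b+b]$ such that $\F[\b'_0]=\E_1$, and then to invoke Lemma~\ref{lem:cruxstrata}(i). The construction given in the proof of~\cite[Proposition~3.30]{SS4} is exactly of this form: one writes $\b'_0=\b+c'$ for a suitable lift $c'\in\B\cap\PP_{-m}(\La)$ of $c$, and since $c'$ commutes with $\b$ one has $\F[\b'_0]=\F[\b,c']=\E_1$. For this particular $\b'_0$ the first equality is trivial, and for any other simple stratum $[\La,n,m-1,\b']$ equivalent to $[\La,n,m-1,\b+b]$, the strata $[\La,n,m-1,\b']$ and $[\La,n,m-1,\b'_0]$ are then themselves equivalent simple strata at the same level, so Lemma~\ref{lem:cruxstrata}(i) gives
$$
\Oo_{\E'}+\PP_1(\La)=\Oo_{\F[\b'_0]}+\PP_1(\La)=\Oo_{\E_1}+\PP_1(\La).
$$

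The main obstacle I anticipate is verifying that the explicit candidate $\b'_0=\b+c'$ coming from~\cite[Proposition~3.30]{SS4} does produce a genuinely \emph{simple} stratum at level $m-1$ with field precisely $\E_1$: pureness and the identification of $\F[\b'_0]$ with $\E_1$ follow easily from the commutation $[c',\b]=0$, but controlling $k_0(\b'_0,\La)\leq -m$ requires the derived-stratum calculation verifying that the derived stratum of $[\La,n,m-1,\b'_0]$ at level $m$ is equivalent in $\B$ to $[\Ga,m,m-1,c]$, which is simple by hypothesis. Once this inductive check is in place, combining Proposition~\ref{prop:simpleminpol} in $\B$ with Lemma~\ref{lem:cruxstrata}(i) assembles the full chain of equalities in~(ii).
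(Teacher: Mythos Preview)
Your reductions for the existence assertion, for part~(i), and for the second equality in~(ii) match the paper's (though part~(i) is actually \cite[Proposition~9.5]{G} after the $\ddag$-reduction, not \cite[Proposition~3.30]{SS4}). The gap is in the first equality of~(ii).

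You assume that \cite[Proposition~3.30]{SS4} hands you a simple stratum $[\La,n,m-1,\b'_0]$ equivalent to $[\La,n,m-1,\b+b]$ with $\F[\b'_0]$ literally equal to $\E_1$. It does not. The natural candidate $\b'_0=\b+c$ has field $\E_1$, but the stratum $[\La,n,m-1,\b+c]$ is not in general equivalent to $[\La,n,m-1,\b+b]$: the hypothesis gives only $s(b)\equiv c\pmod{\PB_{1-m}(\Ga)}$, not $b\equiv c\pmod{\PP_{1-m}(\La)}$. Nor is it immediate that $[\La,n,m-1,\b+c]$ is itself simple; that would require an analogue of \cite[Corollary~2.2.8]{BK} in the $\D$-setting, which you neither cite nor prove. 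What \cite[Proposition~3.30]{SS4} actually produces is a $\b'$ obtained by $\U_1(\La)$-conjugation of an auxiliary element, and relating its field to $\E_1$ modulo $\PP_1(\La)$ is exactly the content you are trying to establish.

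The paper takes a different route and does \emph{not} try to realise $\E_1$ as $\F[\b']$. It passes to the block algebra $\A(\rS)=\End_\D(\rS)$ for $\rS$ a simple right $\E_1\otimes_\F\D$-module (not the $\ddag$-construction), builds there a simple stratum $[\SS,n_0,m_0-1,\b+b']$ with field $\E'$ possibly distinct from $\E_1$, and proves $\Oo_{\E'}+\PP_1(\SS)=\Oo_{\E_1}+\PP_1(\SS)$ directly by cases. If $c\in\E$ this is Lemma~\ref{lem:cruxstrata}(i) at level $m$. If $c\notin\E$, one takes $x\in\Oo_{\E'}$, writes $x=\g+y$ with $\g\in\PB_0(\SS_\b)$ and $y$ small via \cite[Proposition~2.29]{SS4}, and uses the vanishing $a_{\b+b'}(x)=0$ together with $s_1(b')\equiv c$ to force $a_c(\g)\in\PB_{1-m_0}(\SS_\b)$, whence $\g\in\Oo_{\D_1}+\PB_1(\SS_\b)$; equality of residue degrees from~(i) then closes the argument. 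The result is finally transported back to $\A$ via the embedding $\ii:\A(\rS)\to\A$ and a $\U_1(\La)$-conjugation. This commutator computation in $\A(\rS)$ is precisely the missing ingredient in your proposal.
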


\begin{proof} The first assertion is proved in~\cite[Proposition~3.30]{SS4}, under the hypothesis that the derived stratum $[\Ga,m,m-1,s(b)]$ is itself
simple. If it is only equivalent to the simple stratum
$[\Ga,m,m-1,c]$ then $c-s(b)\in\PB_{1-m}(\Ga)$ so,
by~\cite[Proposition~2.29]{SS4}, there is $d\in\PP_{1-m}(\La)$ such
that $s(d)=c-s(b)$. Replacing $b$ by $b+d$ we reduce to the case that
the derived stratum is simple and the result here follows.

For (i), we may pass first to $\La^\ddag$, where the result follows from~\cite[Proposition~9.5]{G}.

For (ii), the second equality follows from Proposition~\ref{prop:simpleminpol}, while the independence of $\Oo_{\E'}+\PP_1(\La)$ on the choice of $\beta'$ comes from Lemma~\ref{lem:cruxstrata}. In particular, we need only find a single $\b'$ for which the first equality holds. 

We fix a simple right $\E_1\otimes_\F\D$-module $\rS$ and put
$\A(\rS)=\End_\D(\rS)$. Let $\C$ be the commutant of $\E_1$ in $\A$,
and let $\D_1$ be the commutant of $\E_1$ in $\A(\rS)$. We also fix a
decomposition $\V=\V^1\oplus\cdots\oplus\V^l$ into simple right
$\E_1\otimes_\F\D$-modules (which are all copies of $\rS$) such that
the lattice sequence $\La$ decomposes into the direct sum of the
$\La^j=\La\cap\V^j$, for $j\in\{1,\ldots,l\}$. From~\cite[\S1.3]{VS3},
after choosing identifications $\V^i\simeq \rS$, we have an $\F$-algebra
embedding $\ii:\A(\rS)\to\A$ and an isomorphism of
$(\A(\rS),\C)$-bimodules
$$
\A(\rS)\otimes_{\D_1}\C \to \A.
$$
We denote by $\B(\rS)$ the commutant of $\E$ in $\A(\rS)$ and let $\rS_\E$ be a simple left $\B(\rS)$-module. By~\cite[Lemme~3.31]{SS4}, the tame corestriction $s$ on $\A$ relative to $\E/\F$ takes the form $s_1\otimes\id_\C$, for $s_1$ a tame corestriction on $\A(\rS)$ relative to $\E/\F$.

Denote by $\SS$ the unique (up to translation) $\E_1$-pure strict $\Oo_\D$-lattice sequence on $\rS$, and by $\SS_\b$ the unique (up to translation) $\Oo_{\D_\E}$-lattice sequence on $\rS_\E$ compatible with the filtration from $\SS$. Set $n_0=-v_{\SS}(\b)$ and $m_0=-v_{\SS}(c)$ and pick $b_0\in\PP_{-m_0}(\SS)$ such that $s_1(b_0)=c$. By~\cite[Lemme~3.32]{SS4}, the stratum $[\SS,n_0,m_0-1,\b+b_0]$ is pure, so equivalent to a simple stratum $[\SS,n_0,m_0-1,\b+b']$, with $s_1(b')\in c+\PB_{1-m_0}(\SS_\b)$. We put $\E'=\F[\b+b']$.

Suppose first that $c\in\E$. Then, by (i), we have $k_0(\b+b',\SS)=k_0(\b,\SS)$ so that $[\SS,n_0,m_0,\b]$ and $[\SS,n_0,m_0,\b+b']$ are equivalent simple strata. Now Lemma~\ref{lem:cruxstrata}(i) implies
\begin{equation}\label{eqn:oEinAScase1}
\Oo_{\E'}+\PP_1(\SS) = \Oo_{\E_1}+\PP_1(\SS).
\end{equation}\addtocounter{equation}{-1}

Now suppose $c\not\in\E$. Let $x\in\Oo_{\E'}$ and put $r=-k_0(\b,\SS)$, which is strictly greater than $m_0$. Then certainly $a_\b(x)\in\PP_{-m_0}(\SS)$ so, by~\cite[Proposition~2.29]{SS4}, we can write $x=\g+y$, with $\g\in\PB_0(\SS)$ and $y\in\PP_{r-m_0}(\SS)\subseteq\PP_1(\SS)$. Thus
$$
0\ =\ (\b+b')(\g+y)-(\g+y)(\b+b')\ \equiv\ a_\b(y)+b'\g-\g b' \pmod{\PP_{1-m_0}(\SS)}.
$$
Applying $s_1$ and using $s_1(b')\in c+\PB_{1-m_0}(\SS)$, we see that $a_{c}(\g)\in\PB_{1-m_0}(\SS_\b)$.
Now $k_0(c,\SS_\b)=-m_0$ so we deduce that $\g\in\Oo_{\D_1}+\PB_1(\SS_\b)$, since $\D_1$ is the commutant of $\E_1=\E[c]$ in $\B(\rS)$. 
In particular, we see that $\Oo_{\E'}\subseteq\Oo_{\D_1}+\PP_1(\SS)$ so that the image of the residue field $k_{\E'}$ in $\PP_0(\SS)/\PP_1(\SS)$ is contained in the image of $k_{\D_1}$. Since (the images of) $k_{\E'}$ and $k_{\E_1}$ are then subfields of $k_{\D_1}$ of the same cardinality (by (i)), they must coincide and we deduce again that 
\begin{equation}\label{eqn:oEinAS}
\Oo_{\E'}+\PP_1(\SS) = \Oo_{\E_1}+\PP_1(\SS).
\end{equation}

Finally, we must translate this back to $\A$, using the embedding $\ii:\A(\rS)\to\A$; we will identify $\b$ and $c$ with their images under $\ii$. The image of the simple stratum $[\SS,n_0,m_0-1,\b+b']$  under $\ii$ is a simple stratum $[\La,n,m-1,\b+\ii(b')]$, and we have
$$
s(\ii(b')) = \ii(s_1(b'))\ \equiv c \equiv s(b) \pmod{\PB_{1-m}(\Ga)},
$$
since $\ii(\PP_{1-m_0}(\SS))\subseteq\PP_{1-m}(\La)$. Thus, as in the end of the proof of~\cite[Proposition~3.30]{SS4}, there exists $h\in\U_1(\La)$ such that, putting $\b'=h^{-1}(\b+\ii(b'))h$, we get a simple stratum $[\La,n,m-1,\b']$ equivalent to $[\La,n,m-1,\b+b]$. Then $\F[\b']=h^{-1}\ii(\E')h$ and
$$
\Oo_{\F[\b']} = h^{-1}\ii(\Oo_{\E'})h \equiv \ii(\Oo_{\E'}) \pmod{\PP_1(\La)}.
$$
Finally, by~\eqref{eqn:oEinAS}, we have $\ii(\Oo_{\E'})\equiv \ii(\Oo_{\E_1}) \pmod{\PP_1(\La)}$ and, since we have identified $\E_1$ with $\ii(\E_1)$, we deduce
$$
\Oo_{\F[\b']} + \PP_1(\La) = \Oo_{\E_1} + \PP_1(\La).
$$
This completes the proof of Proposition~\ref{prop:refinement}.
\end{proof}

\subsection{}We also have a converse to Proposition~\ref{prop:refinement}:

\begin{prop}[{cf.~\cite[Theorem~2.4.1]{BK},~\cite[Proposition~9.3]{G}}]
\label{prop:derived}
Let $[\La,n,m,\b]$ be a pure stratum equivalent to the simple stratum
$[\La,n,m,\g_1]$. Let $s_1$ be a tame corestriction on $\A$ relative to
$\F[\g_1]/\F$. Then the derived stratum $[\Ga_1,m,m-1,s_1(\b-\g_1)]$ is
equivalent to a simple (or null) stratum.
\end{prop}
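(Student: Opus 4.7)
The plan is to reduce to the sound case via the $\La^\ddag$ construction of paragraph~\ref{SS:passtoddag} and then invoke Grabitz's~\cite[Proposition~9.3]{G}. This reduction is legitimate because the statement involves no conjugation.

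Concretely, I would form the strata $[\La^\ddag,n,m,\b]$ and $[\La^\ddag,n,m,\g_1]$ in $\A^\ddag$, where $\b$ and $\g_1$ now denote the block-diagonal elements $\diag(\b,\ldots,\b)$ and $\diag(\g_1,\ldots,\g_1)$. The first is pure, equivalent to the second, which is sound. By~\cite[Proposition~2.26]{SS4} (tame corestrictions decompose by blocks), I would choose a tame corestriction $s_1^\ddag$ on $\A^\ddag$ relative to $\F[\g_1]/\F$ whose $(1,1)$-block is $s_1$. With $\Ga_1^\ddag$ the lattice sequence on the relevant simple $\B_{\g_1}^\ddag$-module satisfying $\PP_k(\La^\ddag)\cap\B_{\g_1}^\ddag=\PB_k(\Ga_1^\ddag)$, the $\ddag$-derived stratum $[\Ga_1^\ddag, m, m-1, s_1^\ddag(\b-\g_1)]$ restricts to $[\Ga_1, m, m-1, s_1(\b-\g_1)]$ on the $(1,1)$-block.

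Applying Grabitz's~\cite[Proposition~9.3]{G} in the sound case would then yield that $[\Ga_1^\ddag, m, m-1, s_1^\ddag(\b-\g_1)]$ is equivalent to a simple or null stratum. To transfer this back to $\La$, I would invoke Proposition~\ref{prop:simpleminpol}: since $s_1^\ddag(\b-\g_1)$ is block-diagonal with identical diagonal blocks equal to $s_1(\b-\g_1)$, and since $\La^\ddag$ has the same $\Oo_\F$-period as $\La$, the element $\overline \y$ attached to the $\ddag$-derived stratum is block-diagonal with identical blocks equal to the $\overline \y$ of $[\Ga_1, m, m-1, s_1(\b-\g_1)]$. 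Consequently the minimum polynomials of the two strata coincide; the $\ddag$-version has irreducible minimum polynomial (or equal to $X$) by Proposition~\ref{prop:simpleminpol}, so the same holds for the original derived stratum, which is therefore equivalent to a simple or null stratum.

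The hard part will be the bookkeeping of the restriction step — checking that the chosen $s_1^\ddag$, the lattice sequence $\Ga_1^\ddag$, and the derived stratum all intersect the $(1,1)$-block as the corresponding objects for the original pure stratum. But this is precisely the situation paragraph~\ref{SS:passtoddag} is designed to handle, and the absence of any conjugation in the statement means no orbit-matching is required at the descent stage.
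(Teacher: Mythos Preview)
There is a genuine gap: you are reading Grabitz's \cite[Proposition~9.3]{G} as a statement about the \emph{given} approximation $\g_1$, but in the sound case it only furnishes the conclusion for \emph{some} simple stratum $[\La,n,m,\g_2]$ equivalent to $[\La,n,m,\b]$ (this is exactly how the paper invokes it, via \cite[Propositions~3.8,~9.3]{G}). If Grabitz's result applied to an arbitrary equivalent simple stratum, the paper's proof would collapse to your one line; the extra work there is precisely the passage from $\g_2$ to $\g_1$.

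What the paper does after reducing to $\La^\ddag$: Grabitz produces $\g_2$ with $[\Ga_2,m,m-1,s_2(\b-\g_2)]$ equivalent to a simple (non-scalar) stratum. Lemma~\ref{lem:cruxstrata} then aligns the two corestrictions and uniformizers so that $s_1(x)\equiv s_2(x)\pmod{\PP_{k+1}(\La)}$ and $\w_1\U^1(\La)=\w_2\U^1(\La)$; hence the strata $[\Ga_i,m,m-1,s_i(\b-\g_2)]$ share the same minimum polynomial, and Proposition~\ref{prop:simpleminpol} makes $[\Ga_1,m,m-1,s_1(\b-\g_2)]$ equivalent to a simple stratum. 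Finally Lemma~\ref{lem:simplederived} (proved beforehand as a special case) shows $s_1(\g_2-\g_1)$ is scalar modulo $\PB_{1-m}(\Ga_1)$, so adding this scalar gives the desired conclusion for $s_1(\b-\g_1)$. Your descent step via the block-diagonal $\overline\y$ is fine in itself, but it does not rescue the argument: the missing ingredient is exactly this $\g_2\to\g_1$ transfer inside $\La^\ddag$.
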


We will need the following Lemma, which is in fact a special case of the Proposition.

\begin{lemm}[{cf.~\cite[(2.4.10)]{BK}}]\label{lem:simplederived}
Let $[\La,n,m,\b_i]$ be equivalent simple strata and let $s_1$ be a
tame corestriction on $\A$ relative to $\F[\b_1]/\F$. Then there
exists $\d\in\F[\b_1]$ such that $s_1(\b_1-\b_2)\equiv
\d\pmod{\PP_{1-m}(\La)}$.
\end{lemm}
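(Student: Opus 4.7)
The plan is to apply Proposition~\ref{prop:derived} to the pure stratum $[\La,n,m,\b_2]$, then invoke the refinement of Proposition~\ref{prop:refinement}, and finally close the argument by a degree count using Lemma~\ref{lem:cruxstrata}.

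First, since $[\La,n,m,\b_2]$ is pure and equivalent to the simple stratum $[\La,n,m,\b_1]$, Proposition~\ref{prop:derived} produces a simple or null stratum $[\Ga_1,m,m-1,c]$ equivalent to $[\Ga_1,m,m-1,s_1(\b_2-\b_1)]$. Granting $c\in\E_1:=\F[\b_1]$, we obtain
$$
s_1(\b_1-\b_2)\;\equiv\;-c\pmod{\PB_{1-m}(\Ga_1)}\;\subseteq\;\PP_{1-m}(\La),
$$
so we may take $\d=-c$ and be done. The heart of the proof is thus the containment $c\in\E_1$.

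To establish this, I apply Proposition~\ref{prop:refinement} with $\b=\b_1$ and $b=\b_2-\b_1$. This produces a simple stratum $[\La,n,m-1,\b']$ equivalent to $[\La,n,m-1,\b_1+b]=[\La,n,m-1,\b_2]$; the latter is itself simple since the simplicity of $[\La,n,m,\b_2]$ gives $k_0(\b_2,\La)\le-m-1\le -(m-1)-1$. Part~(i) of Proposition~\ref{prop:refinement} then records
$$
[\F[\b']:\F]\;=\;[\F[\b_1,c]:\F].
$$

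Finally, two applications of Lemma~\ref{lem:cruxstrata}, to the pairs of equivalent simple strata $\{[\La,n,m-1,\b'],[\La,n,m-1,\b_2]\}$ and $\{[\La,n,m,\b_1],[\La,n,m,\b_2]\}$, yield $[\F[\b']:\F]=[\E_2:\F]$ and $[\E_1:\F]=[\E_2:\F]$ respectively: parts~(i) and~(iii) of that Lemma match the residue fields and uniformizers, forcing equal residue class degrees $f$ and equal ramification indices $e$. Chaining these two equalities with the displayed identity above gives $[\F[\b_1,c]:\F]=[\E_1:\F]$, and since $\E_1\subseteq\F[\b_1,c]$ this forces $c\in\E_1$, as required. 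The main obstacle---that Proposition~\ref{prop:derived} by itself yields only the simplicity of the derived stratum and not the stronger scalarity modulo $\PB_{1-m}(\Ga_1)$---is overcome precisely by this degree-count bootstrap.
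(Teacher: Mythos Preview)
Your argument is circular. The very first step invokes Proposition~\ref{prop:derived}, but in the paper that Proposition is proved \emph{using} Lemma~\ref{lem:simplederived}: its proof begins by passing to an equivalent stratum so that $[\La,n,m-1,\b]$ is simple, and then reads ``If $[\La,n,m,\b]$ is also simple then the result follows from Lemma~\ref{lem:simplederived}''. Your application has $\b=\b_2$ with $[\La,n,m,\b_2]$ simple by hypothesis, so you are invoking Proposition~\ref{prop:derived} precisely in the case where its proof is nothing but an appeal to the Lemma you are trying to establish. The second use of Lemma~\ref{lem:simplederived} in the proof of Proposition~\ref{prop:derived} (to handle the difference $\g_2-\g_1$ of two approximations) confirms that the dependency is genuine and not an artefact of presentation.

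Setting the circularity aside, the rest of your bootstrap (Proposition~\ref{prop:refinement}(i) plus the degree count via Lemma~\ref{lem:cruxstrata}) is sound and would indeed force $c\in\E_1$. But you need an independent reason why the derived stratum $[\Ga_1,m,m-1,s_1(\b_2-\b_1)]$ is equivalent to a simple or null stratum in the first place. The paper obtains this by a direct argument: it passes to $\La^\ddag$ to reduce to the strict sound case, and then runs the proof of~\cite[(2.4.10)]{BK} verbatim, substituting~\cite[Proposition~4.3.3]{Br} for~\cite[Proposition~1.4.6]{BK} and~\cite[Proposition~4.1.1]{Br} for~\cite[Theorem~1.5.8]{BK}. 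That route computes the intertwining of the derived stratum directly and never touches Proposition~\ref{prop:derived}.
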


\begin{proof}By passing to $\La^\ddag$, we may assume we are in the
strict sound case. The proof is then identical to that
of~\cite[(2.4.10)]{BK}, replacing~\cite[Proposition~1.4.6]{BK}
by~\cite[Proposition~4.3.3]{Br} and~\cite[Theorem~1.5.8]{BK}
by~\cite[Proposition~4.1.1]{Br}.
\end{proof}

\begin{proof}[Proof of Proposition~\ref{prop:derived}] By passing to an
equivalent stratum, we may assume that the stratum $[\La,n,m-1,\b]$
is simple. If $[\La,n,m,\b]$ is also simple then the result follows from
Lemma~\ref{lem:simplederived}; thus we may assume $k_0(\b,\La)=-m$. By
passing to  
$\La^\ddag$, we may assume we are in the strict sound case. We write
  $\E_1=\F[\g_1]$ and $\B_1$ for the $\A$-centralizer of $\E_1$.

By~\cite[Propositions~3.8,~9.3]{G}, there exists a simple stratum
$[\La,n,m,\g_2]$ equivalent to $[\La,n,m,\b]$ such that the derived
stratum $[\Ga_2,m,m-1,s_2(\b-\g_2)]$ is equivalent to a simple
(or null) stratum, where $s_2$ is a tame corestriction on $\A$
relative to $\F[\g_2]/\F$. Moreover, this derived stratum is
non-scalar, by Proposition~\ref{prop:refinement}, since $k_0(\b,\La)
> k_0(\g_2,\La)$, and thus, by Proposition~\ref{prop:simpleminpol}, it
has irreducible minimum polynomial.

We write $\E_2=\F[\g_2]$ and $\B_2$ for the
$\A$-centralizer of $\E_2$. By Lemma~\ref{lem:cruxstrata}, we may
assume that the tame corestriction $s_2$ is chosen such that, for
all $k\in\ZZ$ and $x\in\PP_k(\La)$,
$$
s_1(x) \equiv s_2(x) \pmod{\PP_{k+1}(\La)}.
$$ 
We also use Lemma~\ref{lem:cruxstrata} to choose uniformizers
$\w_i$ for $\E_i$ such that $\w_1\U^1(\La)=\w_2\U^1(\La)$.
Again by Lemma~\ref{lem:cruxstrata}, the residue fields $k_{\E_i}$ have
a common image in $\PP_0(\La)/\PP_1(\La)$ so that we may identify
them. Moreover, $\PB_0(\Ga_i)/\PB_1(\Ga_i)$ have a common image in
$\PP_0(\La)/\PP_1(\La)$: by~\cite[Proposition~2.29]{SS4}, the maps
$s_i:\PP_k(\La)\to\PP_k(\Ga_i)$ are surjective, for all $k\in\ZZ$, so
$\PB_0(\Ga_i)/\PB_1(\Ga_i)$ are the common image of $s_i$ in
$\PP_0(\La)/\PP_1(\La)$.

Put $b_i=\b-\g_i$. By the choices of $\w_i$ and $s_i$, we have
$$
\y(s_1(b_2),\Ga_1) \equiv \y(s_2(b_2),\Ga_2) \pmod{\PP_1(\La)}.
$$
In particular (given the identifications we have made), we see that
the two strata $[\Ga_i,m,m-1,s_i(b_2)]$ have the same minimum and
characteristic polynomials. In particular, $[\Ga_1,m,m-1,s_1(b_2)]$ has
irreducible minimum polynomial so, by
Proposition~\ref{prop:simpleminpol}, it is equivalent to a simple
stratum $[\Ga_1,m,m-1,c]$.

Finally, by Lemma~\ref{lem:simplederived}, the stratum
$[\Ga_1,m,m-1,s_1(\g_2-\g_1)]$ is equivalent to a scalar stratum
$[\Ga_1,m,m-1,\d]$, whence $[\Ga_1,m,m-1,s_1(b_1)]$ is equivalent
to the simple stratum $[\Ga_1,m,m-1,c+\d]$, as required.
\end{proof}


\section{Simple characters and refinement}\label{S.characters}

\subsection{} Let $\A$ be a central simple $\F$-algebra and let $[\La,n,0,\b]$ be a simple stratum in $\A$. 
To this simple stratum, in~\cite[\S2.4]{SS4}, one attaches 
compact open subgroups $\H(\b,\La)\subseteq\J(\b,\La)$ of $\mult\A$, together with filtrations
$$
\H^{m+1}(\b,\La)=\H(\b,\La)\cap\U^{m+1}(\La),\quad
\J^{m+1}(\b,\La)=\J(\b,\La)\cap\U^{m+1}(\La),\qquad m\ge 0,
$$ 
and a finite set $\Cc(\La,0,\b)$ of characters of $\H^{1}(\b,\La)$,
called \emph{simple characters of level $0$}, depending on the choice of
an additive character
$$
\psi_\F:\F\to\mathbb{C}^{\times}
$$
which is trivial on $\p_{\F}$ but not on $\Oo_{\F}$, and which will now
be fixed once and for all.

By restriction to $\H^{m+1}(\b,\La)$, we get also a set $\Cc(\La,m,\b)$ of \emph{simple characters of level $m$}. If $\lfloor n/2\rfloor\<m$, then $\H^{m+1}(\b,\La)=\U^{m+1}(\La)$, 
and the set $\Cc(\La,m,\b)$ reduces to the single character 
$\psi_\b$ of $\U^{m+1}(\La)$ defined by
$$
\psi_\b:x\mapsto\psi_\F\circ\tr_{\A/\F}(\b(x-1)),
$$
where $\tr_{\A/\F}$ denotes the reduced trace of $\A$ over $\F$, which
depends only on the equivalence class of $[\La,n,m,\b]$. 
More generally, for any $m$, the subgroup
$\H^{m+1}(\b,\La)$ and the set $\Cc(\La,m,\b)$ depend only on the
equivalence class of $[\La,n,m,\b]$. 

Note that we will use the following common convention: the trivial
character of the group $\U^{t+1}(\La)$ will be considered as a simple
character for the trivial stratum $[\La,t,t,0]$.

\subsection{}
Various properties of simple characters can be found in~\cite{SS4,BSS}. For now we recall two of them, the first of which is a special case of the intertwining formula~\cite[Th\'eor\`eme~2.24]{SS4}:

\begin{prop} 
Let $\A$ be a central simple $\F$-algebra, let $[\La,n,0,\b]$ be a simple stratum in $\A$ and let $\t\in\Cc(\La,0,\b)$. Then, writing $\B$ for the $\A$-centralizer of $\b$ as usual, we have 
$$
\I_\G(\t)=\J^1(\b,\La)\mult\B\J^1(\b,\La).
$$
\end{prop}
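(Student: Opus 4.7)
The plan is to cite \cite[Th\'eor\`eme~2.24]{SS4} — the general two-sided intertwining formula for simple characters — and verify that the present statement is its diagonal specialization. Explicitly, [SS4, Th\'eor\`eme~2.24] computes $\I_\G(\t_1,\t_2)$ for a pair of simple characters $\t_i \in \Cc(\La_i,0,\b_i)$ attached to (possibly distinct) simple strata on (possibly distinct) $\Oo_\D$-lattice sequences; taking $\La_1 = \La_2 = \La$, $\b_1 = \b_2 = \b$ and $\t_1 = \t_2 = \t$, the general intertwining subgroup appearing there reduces to $\J^1(\b,\La)\mult\B\J^1(\b,\La)$, which is the content of the proposition.

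For the reader's orientation, the easy containment $\J^1(\b,\La)\mult\B\J^1(\b,\La) \subseteq \I_\G(\t)$ can be verified directly: elements of $\mult\B$ centralize $\b$ and hence preserve the top-layer datum $\psi_\b$ underlying $\t$, while $\J^1(\b,\La)$ intertwines $\t$ because the latter extends to a Heisenberg-type representation of $\J^1(\b,\La)$ — equivalently, $\J^1$ normalizes $\H^1$ and fixes $\t$ under conjugation. The reverse containment is the substance of the cited theorem; its proof runs by downward induction on the level $m$, using the derived-stratum and refinement machinery recalled in Section~\ref{S.pairs} (Propositions~\ref{prop:refinement} and~\ref{prop:derived}) to adjust any $g \in \I_\G(\t)$, modulo $\J^1\mult\B\J^1$, so that it intertwines a character attached to a deeper filtration step, until one reaches the shallow range $\lfloor n/2\rfloor \< m$, where $\H^{m+1} = \U^{m+1}(\La)$ and the intertwining can be computed by inspection from the formula for $\psi_\b$.

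Since \cite[Th\'eor\`eme~2.24]{SS4} is already at hand, no genuinely new argument is required: the only task is to identify the specialization, which is transparent. Were one to reprove the statement from scratch, the main obstacle would of course be the reverse containment — the step-by-step inductive transfer of intertwining across filtration layers — which is precisely what the derived-stratum formalism was designed to handle.
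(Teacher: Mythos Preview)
Your proposal is correct and matches the paper's own treatment: the paper does not give an independent proof but simply records this proposition as a special case of the intertwining formula \cite[Th\'eor\`eme~2.24]{SS4}, exactly as you do. Your additional commentary on the easy containment and the inductive structure of the reverse containment is accurate but goes beyond what the paper includes.
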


The following fundamental result is one of the main results of~\cite{BSS}:

\begin{prop}[{\cite[Theorem~1.12]{BSS}}]
\label{prop:IICwithK}
Let $\A$ be a simple central $\F$-algebra.
For $i=1,2$, let $[\La,n,m,\b_i]$ be a simple stratum in $\A$, 
and let $\t_{i}\in\Cc(\La,m,\b_i)$ be a simple character.
Write $\K_i$ for the maximal un\-ramified extension of $\F$ contained
in $\F(\b_i)$.
Assume that $\t_1$ and $\t_2$ inter\-twine in $\mult\A$ and that the $(\F[\b_i],\La)$ have the same embedding type.
Then there is an element $u\in\KK(\La)$ such that: 
\begin{enumerate}
\item 
$\K_1=u\K_2u^{-1}$; 
\item
$\Cc(\La,m,\b_1)=\Cc(\La,m,u\b_2u^{-1})$;
\item 
$\t_2(x)=\t_1(uxu^{-1})$, for all $x\in\H^{m+1}(\b_2,\La)$.
\end{enumerate}
\end{prop}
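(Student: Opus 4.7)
The plan is to proceed by downward induction on $m$ (equivalently, upward induction on $n - m$), after first reducing to the sound case via the doubling construction of paragraph~\ref{SS:passtoddag}. The reduction requires some care because the statement involves conjugation, but sound-case simple characters restrict to those on $\La$ via intersection with $\A$, and an appropriate block of an element $u^\ddag \in \KK(\La^\ddag)$ produced in $\A^\ddag$ gives the required $u \in \KK(\La)$; in particular, the maximal unramified subfields and the equalities at the level of $\Cc(\La,m,\cdot)$ are all compatible with this descent.

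For the base case $m \ge \lfloor n/2 \rfloor$, the subgroup $\H^{m+1}(\b_i,\La)$ collapses to $\U^{m+1}(\La)$ and $\t_i = \psi_{\b_i}$. Intertwining of $\psi_{\b_1}$ and $\psi_{\b_2}$ in $\mult\A$ translates, via the non-degeneracy of $\psi_\F \circ \tr_{\A/\F}$ on $\PP_{-n}(\La)/\PP_{-m}(\La)$, into the existence of $u \in \mult\A$ such that $[\La,n,m,\b_1]$ and $[\La,n,m,u\b_2u^{-1}]$ are equivalent simple strata. Proposition~\ref{prop:IICsimplepairs}, combined with the embedding type hypothesis, then promotes $u$ to an element of $\KK(\La)$ and yields all three conclusions. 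For the inductive step, restrict $\t_i$ to $\H^{m+2}(\b_i,\La)$ to obtain simple characters at level $m+1$ which still intertwine in $\mult\A$ and which satisfy the embedding type hypothesis; by induction, after replacing $\b_2$ by $u_0\b_2u_0^{-1}$ for some $u_0 \in \KK(\La)$, one has $\H^{m+2}(\b_1,\La) = \H^{m+2}(\b_2,\La)$ and $\t_1 = \t_2$ on this common subgroup.

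The main obstacle is the final adjustment at level $m$. The residual discrepancy $\t_2\t_1^{-1}$ is now a character of the abelian quotient $\H^{m+1}(\b_1,\La)/\H^{m+2}(\b_1,\La)$, and one seeks to absorb it by a further conjugation that preserves the matching already achieved one level up, i.e.\ an element congruent to $1$ modulo a high power of $\PP(\La)$ and lying in the centralizer of $\b_1$ up to high order. The essential technical input is Proposition~\ref{prop:derived}: for $b = \b_2 - \b_1$, the derived stratum $[\Ga_1, m, m-1, s_1(b)]$ (where $s_1$ is a tame corestriction relative to $\F[\b_1]/\F$) is equivalent to a simple or null stratum. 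Combined with Proposition~\ref{prop:refinement}, this produces $\b_2'$ with $\b_2' \equiv \b_2 \pmod{\PP_{1-m}(\La)}$ such that $[\La,n,m-1,\b_2']$ is simple with the same embedding type parameters as $[\La,n,m-1,\b_1]$, after which the inductive hypothesis applied one step further provides the conjugating element. Throughout, the bookkeeping on the maximal unramified subfield $\K_i$ is delicate: the embedding type hypothesis is exactly what permits the intermediate conjugating elements to be chosen from $\KK(\La)$ rather than merely from the full intertwining set, and the character identity $\t_2(x) = \t_1(uxu^{-1})$ must be verified on the whole of $\H^{m+1}(\b_2,\La)$, which requires showing that the composite conjugation produced by the induction does indeed normalize the relevant subgroup.
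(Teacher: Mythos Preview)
The paper does not give its own proof of this proposition: it is simply quoted as \cite[Theorem~1.12]{BSS} and used as a black box. There is therefore nothing in the present paper to compare your argument against.

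That said, two points in your proposal would need substantial repair before it could stand as an independent proof. First, the reduction to the sound case via $\La^\ddag$ is precisely the situation the paper warns against in paragraph~\ref{SS:passtoddag}: ``it is safe to do this provided there is no conjugation involved in the statement''. Your claim that an appropriate block of $u^\ddag\in\KK(\La^\ddag)$ yields the required $u\in\KK(\La)$ is not justified; elements of $\KK(\La^\ddag)$ have no reason to be block-diagonal with respect to the decomposition $\V^\ddag=\V\oplus\cdots\oplus\V$, so extracting a conjugating element in $\KK(\La)$ from one in $\KK(\La^\ddag)$ is a genuine problem, not a routine descent.

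Second, in the inductive step you restrict to level $m+1$ and invoke the inductive hypothesis, but the strata $[\La,n,m+1,\b_i]$ need not be simple (this fails exactly when $k_0(\b_i,\La)=-m-1$). You must pass to equivalent simple strata $[\La,n,m+1,\g_i]$, and then the embedding-type hypothesis must be re-established for $(\F[\g_i],\La)$; these fields are strictly smaller than $\F[\b_i]$ and their embedding types are not obviously inherited. This is not addressed in your sketch.
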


\subsection{} 
One of the technical difficulties with simple characters is that they
do not determine the simple stratum used to define them: that is, we
may have $\Cc(\La,m,\b_1)\cap\Cc(\La,m,\b_2)\ne\emptyset$, for
inequivalent strata $[\La,n,m,\b_i]$ (though certain invariants of the
strata are equal -- see later). In order to cope with this, we need
the following translation principle, which is the main result of this
section.

\begin{theo}[{cf.~\cite[Translation~Principle~2.11]{BKcc}}]
\label{thm:translation}
Let $[\La,n,m,\g_i]$ be simple strata with
$\Cc(\La,m,\g_1)\cap\Cc(\La,m,\g_2)\ne\emptyset$. Let $[\La,n,m-1,\b_1]$ be a simple stratum such that $[\La,n,m,\b_1]$ is equivalent to $[\La,n,m,\g_1]$. Then there is a simple stratum $[\La,n,m-1,\b_2]$
such that $[\La,n,m,\b_2]$ is equivalent to $[\La,n,m,\g_2]$ and
$\Cc(\La,m-1,\b_1)=\Cc(\La,m-1,\b_2)$. 
\end{theo}

The proof of this translation principle, which will take up most of
the remainder of this section, begins with the following
special case, in which $\b_1=\g_1$:

\begin{lemm}[{cf.~\cite[Theorem~3.5.9]{BK},~\cite[Proposition~9.10]{G}}]
\label{lem:translation}
Let $[\La,n,m,\g_i]$ be simple strata with
$\Cc(\La,m,\g_1)\cap\Cc(\La,m,\g_2)\ne\emptyset$. Then
$\H^m(\g_1,\La)=\H^m(\g_2,\La)$ and there is a simple stratum
$[\La,n,m,\b_2]$ equivalent to $[\La,n,m,\g_2]$ such that
$\Cc(\La,m-1,\b_2)=\Cc(\La,m-1,\g_1)$. 
\end{lemm}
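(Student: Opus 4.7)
\medskip

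My plan is to reduce to the sound strict case via the $\ddag$-construction of paragraph~\ref{SS:passtoddag}, after which the assertion becomes a version of~\cite[Theorem~3.5.9]{BK} and~\cite[Proposition~9.10]{G}. Fixing a common simple character $\t_0\in\Cc(\La,m,\g_1)\cap\Cc(\La,m,\g_2)$, the transfer property of simple characters yields the corresponding intersection for the sound strata $[\La^\ddag,n,m,\g_i]$ in $\A^\ddag$. Since the statement of the lemma contains no conjugation, the equality $\H^m(\g_1,\La)=\H^m(\g_2,\La)$ and the existence of $\b_2$ follow from their analogs in $\A^\ddag$ by intersection with $\A$ via the $(1,1)$-block.

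To construct $\b_2$ in the sound setting, I would pick an extension $\t_1\in\Cc(\La,m-1,\g_1)$ of $\t_0$ and an auxiliary $\t_1'\in\Cc(\La,m-1,\g_2)$ also extending $\t_0$ (available once the equality of the two $\H^m$'s is known). The ratio $\t_1/\t_1'$ is a character of $\H^m(\g_2,\La)/\H^{m+1}(\g_2,\La)$, which by the duality supplied by~\cite[Proposition~2.29]{SS4} can be written in the form $x\mapsto\psi_\F\circ\tr_{\A/\F}(c(x-1))$ for some $c\in\PP_{-m}(\La)$. The requirement that $\t_1'\cdot\psi_c$ lie in a simple-character set attached to a refinement of $\g_2$ translates exactly into the condition that the derived stratum $[\Ga_2,m,m-1,s_2(c)]$ be equivalent to a simple or null stratum, and this is precisely what is granted by the fact that $\t_1$ itself belongs to a simple-character set on the $\g_1$-side. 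Proposition~\ref{prop:refinement} then produces the desired simple stratum $[\La,n,m-1,\b_2]$ equivalent to $[\La,n,m-1,\g_2+c]$, and its design ensures $\t_1\in\Cc(\La,m-1,\b_2)$.

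The equality $\Cc(\La,m-1,\g_1)=\Cc(\La,m-1,\b_2)$ then follows for free: since $[\La,n,m,\b_2]$ is equivalent to $[\La,n,m,\g_2]$ we have $\Cc(\La,m,\g_1)=\Cc(\La,m,\g_2)=\Cc(\La,m,\b_2)$, so both refinement sets are principal homogeneous spaces for the same group of twisting characters and share the element $\t_1$.

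The main obstacle I expect is the identification step: transferring the simplicity condition on $\t_1$ (which is intrinsic to the $\g_1$-side) into the simplicity of the derived stratum $[\Ga_2,m,m-1,s_2(c)]$ on the $\g_2$-side. This requires carefully comparing the tame corestrictions, uniformizers and embedding types attached to $\g_1$ and $\g_2$, for which Lemma~\ref{lem:cruxstrata} and Proposition~\ref{prop:IICwithK} (whose embedding-type hypothesis is satisfied, by Lemma~\ref{lem:cruxstrata}(iv), from the existence of a common simple character) are the essential inputs.
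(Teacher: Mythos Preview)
Your outline has the right shape—extend $\t_0$ to $\t_1\in\Cc(\La,m-1,\g_1)$ and $\t_1'\in\Cc(\La,m-1,\g_2)$, write $\t_1=\t_1'\psi_c$, show the derived stratum $[\Ga_2,m,m-1,s_2(c)]$ is simple or null, then refine via Proposition~\ref{prop:refinement}—and the final equality of simple-character sets does follow once there is a common element (this is~\cite[Theorem~4.16]{BSS}, which is cleaner than your principal-homogeneous-space heuristic). But the mechanism you propose for the crucial step is circular.

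You plan to compare the tame corestrictions, uniformizers and embedding types of $\g_1$ and $\g_2$ via Lemma~\ref{lem:cruxstrata}. That lemma, however, applies only to \emph{equivalent} simple strata, and $[\La,n,m,\g_1]$, $[\La,n,m,\g_2]$ are not assumed equivalent—they merely share a simple character. The comparison result for strata sharing a simple character is Lemma~\ref{lem:crux}, which is proved \emph{after} the present lemma and uses it in its proof. For the same reason you cannot verify the embedding-type hypothesis of Proposition~\ref{prop:IICwithK} ``from the existence of a common simple character'': that implication is precisely Lemma~\ref{lem:crux}(ii), not Lemma~\ref{lem:cruxstrata}(iv). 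Your proposed $\ddag$-reduction does not rescue this either: even granting the sound case, ``intersection with $\A$ via the $(1,1)$-block'' does not recover a simple $\b_2\in\A$ from $\b_2^\ddag\in\A^\ddag$, since the $(1,1)$-block of $\b_2^\ddag$ need not define a simple stratum at level $m-1$.

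The paper proceeds differently. It does not pass to $\La^\ddag$ at all; it runs the proof of~\cite[Theorem~3.5.9]{BK} directly in $\A$ for an arbitrary lattice sequence, substituting the appropriate generalizations of the auxiliary results. The equality $\H^m(\g_1,\La)=\H^m(\g_2,\La)$ is taken from~\cite[Lemma~4.12]{BSS}, and the key step—simplicity of the derived stratum on the $\g_2$-side—is obtained by an \emph{intertwining} argument (the analogue of~\cite[Lemma~3.5.10]{BK}, with the intertwining formula~\cite[Proposition~2.24]{SS4} replacing~\cite[Corollary~3.3.17]{BK}), together with Lemma~\ref{lem:scalar} and Proposition~\ref{prop:refinement}. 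No direct comparison of $\g_1$- and $\g_2$-corestrictions is needed, and nothing from Lemma~\ref{lem:crux} is invoked.
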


\begin{proof} From~\cite[Lemma~4.12]{BSS}, we have
already that $\H^m(\g_1,\La)=\H^m(\g_2,\La)$. The remainder of the
proof is \emph{mutatis mutandis} that of~\cite[Theorem~3.5.9]{BK}: we
replace~\cite[Theorem~3.3.2]{BK} by~\cite[Th\'eor\`eme~2.23]{SS4},
\cite[Lemma~2.4.11]{BK} by Lemma~\ref{lem:scalar},
\cite[Theorem~2.2.8]{BK} by Proposition~\ref{prop:refinement}, 
\cite[3.3.20]{BK} by~\cite[Proposition~2.15]{SS4},
and~\cite[3.5.8]{BK} by~\cite[Theorem~4.16]{BSS}; 
for the proof of~\cite[Lemma~3.5.10]{BK} we
replace~\cite[Corollary~3.3.17]{BK} by~\cite[Proposition~2.24]{SS4}
and~\cite[Proposition~3.3.9]{BK} by~\cite[Lemme~2.30(2)]{SS4}.
\end{proof}

\subsection{}
The technical crux of the translation principle is contained in the
following lemma:

\begin{lemm}[{cf.~\cite[Lemma~5.2]{BKcc}}]
\label{lem:crux}
Let $[\La,n,m-1,\b_i]$ be simple strata with
$\Cc(\La,m-1,\b_1)\cap\Cc(\La,m-1,\b_2)\ne\emptyset$. Then, putting $\E_i=\F[\b_i]$, we
have:
\begin{enumerate}
\item $\Oo_{\E_1}+\PP_1(\La)=\Oo_{\E_2}+\PP_1(\La)$;
\item the pairs $(\E_i,\La)$ have the same embedding type;
\item there are prime elements $\varpi_i$ of $\E_i$ such that $\varpi_1\U^1(\La)=\varpi_2\U^1(\La)$;
\item there are tame corestrictions $s_i$ on $\A$ relative to
  $\E_i/\F$ such that, for all $k\in\ZZ$ and $x\in\PP_k(\La)$,
$$
s_1(x) \equiv s_2(x) \pmod{\PP_{k+1}(\La)}.
$$
\end{enumerate}
\end{lemm}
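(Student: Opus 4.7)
The plan is to proceed by induction on $n-m$, using the translation principle Lemma~\ref{lem:translation} and the corresponding result for equivalent simple strata, Lemma~\ref{lem:cruxstrata}, as the main tools. The argument parallels that of~\cite[Lemma~5.2]{BKcc} in the split case.

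For the base case, when $m-1\ge\lfloor n/2\rfloor$, we have $\H^m(\b_i,\La)=\U^m(\La)$, and $\Cc(\La,m-1,\b_i)$ reduces to the single character $\psi_{\b_i}$ restricted to $\U^m(\La)$. A common simple character then forces $\psi_{\b_1-\b_2}$ to be trivial on $\U^m(\La)$, and by the non-degeneracy of the trace pairing between $\PP_m(\La)/\PP_{m+1}(\La)$ and $\PP_{-m}(\La)/\PP_{1-m}(\La)$ we obtain $\b_1-\b_2\in\PP_{1-m}(\La)$. Hence $[\La,n,m-1,\b_1]$ and $[\La,n,m-1,\b_2]$ are equivalent simple strata and Lemma~\ref{lem:cruxstrata} delivers all four conclusions.

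For the inductive step, assume the lemma holds for all pairs of simple strata $[\La,n,m'-1,\b_i']$ with $m'>m$ (i.e.\ of strictly smaller depth $n-m'$). Restricting the common character to $\H^{m+1}(\b_i,\La)$ yields $\Cc(\La,m,\b_1)\cap\Cc(\La,m,\b_2)\ne\emptyset$. After, if necessary, replacing each $\b_i$ by a simple refinement at level $m$ furnished by Proposition~\ref{prop:refinement}, apply Lemma~\ref{lem:translation} to obtain a simple stratum $[\La,n,m,\tilde\b_2]$, equivalent to $[\La,n,m,\b_2]$, with $\Cc(\La,m-1,\tilde\b_2)=\Cc(\La,m-1,\b_1)$. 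Lemma~\ref{lem:cruxstrata} applied at level $m$ to the equivalent simple strata $[\La,n,m,\b_2]$ and $[\La,n,m,\tilde\b_2]$ provides (i)--(iv) for the pair $(\b_2,\tilde\b_2)$; by transitivity of the four conclusions, it suffices to establish them for $(\b_1,\tilde\b_2)$. These satisfy the stronger equality $\Cc(\La,m-1,\b_1)=\Cc(\La,m-1,\tilde\b_2)$, hence $\Cc(\La,m,\b_1)=\Cc(\La,m,\tilde\b_2)$ by restriction, and the inductive hypothesis applied at level $m$ (depth $n-m-1<n-m$) finishes the step.

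The principal obstacle is the boundary case $m=-k_0(\b_i,\La)$, in which $[\La,n,m,\b_i]$ is pure but not simple. Passing to a simple refinement $\g_i$ at level $m$ alters the field $\F[\b_i]$, so the four invariants (i)--(iv) do not transfer tautologically between $\b_i$ and $\g_i$; the essential tools for controlling this passage are the identity $\Oo_{\F[\b']}+\PP_1(\La)=\Oo_{\F[\b,c]}+\PP_1(\La)$ of Proposition~\ref{prop:refinement}(ii) and the tame-corestriction compatibility of Proposition~\ref{prop:derived}. The embedding-type statement (ii) requires in addition the descent argument from the sound case used in the proof of Lemma~\ref{lem:cruxstrata}(iv), appealing ultimately to~\cite[Lemma~5.2]{BG}.
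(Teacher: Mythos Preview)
Your inductive framework and base case match the paper's, and you have correctly located the only substantive case: $k_0(\b_i,\La)=-m$. However, your inductive step does not actually treat this case, and the closing paragraph is a list of ingredients rather than an argument.

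Concretely, when $k_0(\b_i,\La)=-m$ the stratum $[\La,n,m,\b_i]$ is \emph{not} simple, so Lemma~\ref{lem:translation} cannot be applied with $\g_i=\b_i$, and Lemma~\ref{lem:cruxstrata} cannot be applied to ``the equivalent simple strata $[\La,n,m,\b_2]$ and $[\La,n,m,\tilde\b_2]$''. Replacing $\b_i$ by an approximation $\g_i$ with $[\La,n,m,\g_i]$ simple lets you run the induction on $(\g_1,\g_2)$, but then you must relate $\Oo_{\E_i}$, $\varpi_i$, $s_i$ to $\Oo_{\F[\g_i]}$, $\varpi_{\g_i}$, $s_{\g_i}$. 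For (i) the paper does this by first adjusting $\g_1$ (via Lemma~\ref{lem:scalar} and Proposition~\ref{prop:refinement}) so that $s_{\g_1}(c_2-c_1)\in\PP_{1-m}(\La)$, where $c_i=\b_i-\g_i$, then using the inductive compatibility of $s_{\g_1},s_{\g_2}$ and $\varpi_{\g_1},\varpi_{\g_2}$ to compare $\y(s_{\g_1}(c_1))$ with $\y(s_{\g_2}(c_2))$ modulo $\PP_1(\La)$, and finally invoking Proposition~\ref{prop:refinement}(ii). This is the actual content you are gesturing at; it is not automatic from ``transitivity''.

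More seriously, you do not address (iii) and (iv) at all in the boundary case, and Proposition~\ref{prop:derived} does not supply them. The paper's argument here is of a quite different nature: one passes to $\La^\ddag$ to get sound strata, uses Proposition~\ref{prop:IICwithK} to conjugate so that the maximal unramified subextensions $\K_1,\K_2$ coincide, and then applies the \emph{interior lifting} and \emph{base change} maps of~\cite[\S\S5--7]{BSS} to reduce to the split case (where the result is~\cite[Lemma~5.2]{BKcc}), followed by a Galois descent to recover uniformizers and corestrictions in $\A$. None of this machinery appears in your proposal, and there is no evident way to obtain (iii),(iv) without it.
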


Note that (ii) in this lemma answers Conjecture~4.17 of~\cite{BSS} --
indeed, it is a generalization of that conjecture since here we do not
assume that the sequence $\La$ is strict. Also, the hypothesis
$\Cc(\La,m-1,\b_1)\cap\Cc(\La,m-1,\b_2)\ne\emptyset$ is equivalent to
$\Cc(\La,m-1,\b_1)=\Cc(\La,m-1,\b_2)$, by~\cite[Theorem~4.16]{BSS}.

\begin{proof} In the split case when $\La$ is strict, this
is~\cite[Lemma~5.2]{BKcc}, while the case of arbitrary $\La$ follows
by passing to $\La^\ddag$.

We proceed by induction on $m$. When $m=n$ the result is
immediate from Lemma~\ref{lem:cruxstrata}. Note that $k_0(\b_i,\La)$
is independent of $i$, by~\cite[Lemma~4.7]{BSS}. If 
$k_0(\b_i,\La)<-m$ then again the result is clear from the induction
hypothesis, since the conclusions (i)--(iv) are independent of $m$, so 
we assume $k_0(\b_i,\La)=-m>-n$. 
By replacing $[\La,n,m-1,\b_1]$ by an equivalent
stratum, Lemma~\ref{lem:translation} says we may (and do) assume
$\Cc(\La,0,\b_1)=\Cc(\La,0,\b_2)$ without affecting the conclusion of
the lemma, by Lemma~\ref{lem:cruxstrata}.

For $i=1,2$, let $[\La,n,m,\g_i]$ be a simple stratum equivalent to
$[\La,n,m,\b_i]$. Then
$$
\Cc(\La,m,\g_1)=\Cc(\La,m,\b_1)=\Cc(\La,m,\b_2)=\Cc(\La,m,\g_2)
$$
so, by induction applied to the simple strata $[\La,n,(m+1)-1,\g_i]$,
(i)--(iv) are satisfied with $\E_{\g_i}=\F[\g_i]$ in place of
$\E_i$, and we pick uniformizers $\w_{\g_i}$ and tame corestrictions
$s_{\g_i}$ satisfying (iii),~(iv). Moreover, by replacing
$[\La,n,m,\g_1]$ by an equivalent stratum, Lemma~\ref{lem:translation}
says we may assume 
$\Cc(\La,0,\g_1)=\Cc(\La,0,\g_2)$.

Put $c_i=\b_i-\g_i$ and let
$\t\in\Cc(\La,m-1,\b_1)=\Cc(\La,m-1,\b_2)$. By~\cite[Proposition~2.15]{SS4},
we have $\t=\vartheta_i\psi_{c_i}$, for some
$\vartheta_i\in\Cc(\La,m-1,\g_i)$. Hence 
$$
\vartheta_1 = \vartheta_2\psi_{c_2-c_1}
$$
and $\vartheta_1,\vartheta_2\in\Cc(\La,m-1,\g_1)$ both restrict to the
same character $\vartheta\in\Cc(\La,m,\g_1)$. Since
$\vartheta_1,\vartheta_2$ are both intertwined by $\B_{\g_1}^\times$,
the same is true of $\psi_{c_2-c_1}$. In particular, restricting to
$\H^m(\g_1,\La)\cap\B_{\g_1}^\times=\U^m(\Ga_{\g_1})$, we see that
$\psi_{s_{\g_1}(c_2-c_1)}|\U^m(\Ga_{\g_1})$ is intertwined by all of
$\B_{\g_1}^\times$ and, by Lemma~\ref{lem:scalar},
$(s_{\g_1}(c_2-c_1)+\PB_{1-m}(\Ga_{\g_1}))\cap
\E_{\g_1}\ne\emptyset$.
In particular, the stratum $[\Ga_{\g_1},m,m-1,s_{\g_1}(c_2-c_1)]$ is
equivalent to a simple (or null) scalar stratum.

By Proposition~\ref{prop:refinement}, there is a simple stratum
$[\La,n,m-1,\g_1']$ equivalent to the stratum
$[\La,n,m-1,\g_1+(c_2-c_1)]$.
Since $\vartheta_2\in\Cc(\La,m-1,\g_1)$,
by~\cite[Proposition~2.15]{SS4} we have
$\vartheta_1=\vartheta_2\psi_{c_2-c_1}=\vartheta_2\psi_{\g'_1-\g_1}\in\Cc(\La,m-1,\g'_1)$. Hence
$\Cc(\La,m-1,\g'_1)=\Cc(\La,m-1,\g_1)$. Moreover, putting
$c_1'=\b_1-\g_1'$ we see that $c_2-c'_1\in\PP_{1-m}(\La)$; in
particular, for any tame corestriction $s_{\g'_1}$ on $\A$ relative to
$\E_{\g_1'}/\F$, we have $s_{\g_1'}(c_2-c_1')\in\PP_{1-m}(\La)$.

Thus, replacing $\g_1$ by $\g_1'$ we may assume that $s_{\g_1}(c_2-c_1)\in\PP_{1-m}(\La)$. By (iv), we also have $s_{\g_2}(c_2-c_1)\in\PP_{1-m}(\La)$. In particular, looking at the derived strata $[\Ga_{\g_i},m,m-1,s_{\g_i}(c_j)]$, with $i,j\in\{1,2\}$, and using the inductive hypotheses (iii),~(iv), we get
$$
\y(s_{\g_1}(c_1)) \equiv 
\y(s_{\g_1}(c_2)) \equiv 
\y(s_{\g_2}(c_2)) \pmod{\PP_1(\La)}.
$$
(The elements $\y$ here are computed with respect to the uniformizers $\w_{\g_i}$ satisfying (iii).)
By Proposition~\ref{prop:derived} the derived strata
$[\Ga_{\g_i},m,m-1,s_{\g_i}(c_i)]$ are equivalent to simple or null
strata so, by Proposition~\ref{prop:refinement} (applied to the strata
$[\La,n,m,\g_i]$ and $\b'=\b_i$) and the inductive hypothesis (i), we have
$$
\Oo_{\E_1}+\PP_1(\La) = 
\Oo_{\E_{\g_1}}[\y(s_{\g_1}(c_1))]+\PP_1(\La) = 
\Oo_{\E_{\g_2}}[\y(s_{\g_2}(c_2))]+\PP_1(\La) = 
\Oo_{\E_2}+\PP_1(\La) 
$$
and we have proved (i). Now (ii) follows exactly as in the proof of Lemma~\ref{lem:cruxstrata} (see also~\cite[Lemma~5.2]{BG}); indeed the proof gives the existence of $u\in\U^1(\La)$ such that $u^{-1}\K_1 u=\K_2$, where $\K_i$ is the maximal unramified subextension of $\E_i/\F$.

For the remainder, we may pass to $\La^\ddag$ and assume we have
soundly embedded strict lattice sequences with $\PP_0(\La)$ principal,
as in the proof of
Lemma~\ref{lem:cruxstrata}. (By~\cite[Proposition~4.11]{BSS}, we have
$\Cc(\La^\ddag,m-1,\b_1)=\Cc(\La^\ddag,m-1,\b_2)$; cf.\ the proof
of~\emph{op. cit.} Theorem~4.16.)
Recall that we have $\t\in\Cc(\La,m-1,\b_1)=\Cc(\La,m-1,\b_2)$, which
we extend to a common simple character in
$\Cc(\La,0,\b_1)=\Cc(\La,0,\b_2)$. Then $\t^u\in\Cc(\La,0,u^{-1}\b_1
u)$ surely intertwines $\t\in\Cc(\La,0,\b_2)$ so, by
Proposition~\ref{prop:IICwithK}, there is $g\in\KK(\La)$ such that
$\t^{ug}=\t$ and $(ug)^{-1}\K_1 (ug)=\K_2$. Since $ug$ then normalizes
$\t$, we have $ug\in\KK(\Ga_{\b_2})\J^1(\b_b,\La)$.   

In particular, there is $x\in\KK(\Ga_{\b_2})$ such that
$h=ugx\in\J^1(\b_2,\La)$, $h^{-1}\K_1 h=\K_2$ and $\t^h=\t$. Thus,
replacing $\b_1$ by $h^{-1}\b_1 h$, we may assume that $\K_1=\K_2=\K$,
without affecting the conclusion of the Lemma (since $h\in
\U^1(\La)$).

Now we will utilise the \emph{interior lifting} and \emph{base change} processes of~\cite{BSS} to reduce to the split case. 

We suppose first that we are in the special case $\K=\F$, that is
$\E_i/\F$ is totally ramified. Fix an unramified extension $\L/\F$
which splits $\A$, so that $\L_i=\E_i\otimes_\F\L$ is a field, for
$i=1,2$. The algebra $\overline\A=\A\otimes_\F\L$ is then a split
simple $\L$-central algebra and we choose a simple left
$\overline\A$-module $\overline\V$. There is a unique (up to
translation) strict $\Oo_\L$-lattice sequence $\overline\La$ on
$\overline\A$ such that
$\PP_k(\overline\La)=\PP_k(\La)\otimes_{\Oo_\F}\Oo_\L$, for all
$k\in\ZZ$ (see~\cite[\S2.2]{VS1}). Identifying $\A$ with $\A\otimes_\F
1 \subseteq \overline\A$, we get strata $[\overline\La,n,m-1,\b_i]$,
which are simple.

Denote by $\Cc(\overline\La,m-1,\b_i)$ the set of simple characters
with respect to the character $\psi_\F\circ\tr_{\L/\F}$. The base
change process from~\cite[\S7.2]{BSS} gives rise to injective
$\KK(\La)$-equivariant maps
$$
\bb^i = \bb^i_{\L/\F}:\Cc(\La,m-1,\b_i) \to \Cc(\overline\La,m-1,\b_i).
$$
Moreover, by~\cite[Proposition~7.6]{BSS}, we have
$\bb^1(\t)=\bb^2(\t)$ so
$\Cc(\overline\La,m-1,\b_1)\cap\Cc(\overline\La,m-1,\b_2)\ne\emptyset$. In
particular, by the split case we get uniformizers $\w^\L_i$ of $\L_i$
such that 
\begin{equation}
\label{eqn:wL}
\w^\L_1 + \PP_{e+1}(\overline\La) = \w^\L_2 + \PP_{e+1}(\overline\La),
\end{equation}
with $e=e(\overline\La|\Oo_\L)=e(\La|\Oo_\F)$,
and tame corestrictions $s_i^\L$ on $\overline\A$ relative to
$\L_i/\L$ for the character $\psi_\F\circ\tr_{\L/\F}$ such that, for
all $k\in\ZZ$ and $x\in\PP_k(\overline\La)$,
$$
s_1^\L(x) \equiv s_2^\L(x) \pmod{\PP_{k+1}(\overline\La)}.
$$
Multiplying through~\eqref{eqn:wL} by a unit, we see that we may
assume $\w^\L_1=\w_1$ is a uniformizer of $\E_1$ and $\w^\L_2=\w_2\z$,
for some uniformizer $\w_2$ of $\E_2$ and $\z\in\Oo_\L^\times$ a root
of unity of order coprime to $p$. Thus we have
$$
\w_1\w_2^{-1}\ \equiv\ \z \pmod{\PP_1(\overline\La)}.
$$
Now the Galois group $\Gal(\L/\F)$ acts on $\overline A$, fixing
$\w_1\w_2^{-1}$, so the image of $\z$ in $k_\L$ is fixed by
$\Gal(k_\L/k_\F)$. In particular, $\z\in\Oo_\F^\times$ so, replacing
$\w_2$ by $\w_2\z$, we get
$$
\w_1 + \PP_{e+1}(\overline\La) = \w_2 + \PP_{e+1}(\overline\La),
$$
and intersecting with $\A$ completes the proof of (iii).

The argument for the tame corestrictions is similar: We check that, if
$s_i$ is an arbitrary tame corestriction on $\A$ relative to
$\E_i/\F$, then $s_i\otimes 1$ is a tame corestriction on
$\overline\A$ relative to $\L_i/\L$. By~\cite[Proposition~2.26]{SS4},
$s_i^\L$ and $s_i\otimes 1$ differ by a unit $u_i$ in $\Oo_{\L_i}$
and, changing by a root of unity, we can assume $u_1\in 1+\p_{\L_1}$. 
We have $u_2\equiv\z\pmod{\p_{\L_i}}$, for some root of unity
$\z\in\Oo_\L^\times$. Then, for all $k\in\ZZ$ and $a\in\PP_k(\La)$
$$
s_1(a)\otimes 1\equiv s_1^\L(a\otimes 1) \equiv s_2^\L(a\otimes 1) \equiv s_2(a)\otimes \z \pmod{\PP_{k+1}(\overline\La)}.
$$
Again, the Galois group $\Gal(\L/\F)$ acts on $\overline A$, fixing
$s_1(a)\otimes 1$, so 
$$
s_2(a)\otimes \z \equiv s_2(a)\otimes \z^\s
\pmod{\PP_{1}(\overline\La)}, \hbox{\qquad for all
}a\in\PP_0(\La),\ \s\in\Gal(\L/\F).
$$
By~\cite[Proposition~2.29]{SS4}, the map
$s_2:\PP_0(\overline\La)\to\PP_0(\overline\Ga_2)$ is surjective so,
as above, we deduce that $\z\in\Oo_\F^\times$ and, after replacing
$s_2$ by $\z s_2$, we may assume $\z=1$. Finally, intersecting with
$\A$ completes with proof of (iv).

\medskip

Finally we consider the case where $\K\ne\F$. Denote by $\C$ the $\A$-centralizer of $\K$, fix a simple left $\C$-module $\W$, and let $\D_\K$ be the algebra opposite to $\End_\C(\W)$. Let $\Ga_\K$ be the unique (up to translation) $\Oo_{\D_\K}$-lattice sequence on $\W$ such that
$$
\PP_k(\La)\cap\C = \PP_k(\Ga_\K), \qquad k\in\ZZ.
$$
Then $[\Ga_\K,n,m-1,\b_i]$ is a simple stratum in $\C$, for $i=1,2$, by~\cite[Proposition~5.2]{BSS}. From~\cite[Theorem~5.8,~Proposition~6.12]{BSS}, we get \emph{interior lifting} maps
$$
\ll^i = \ll^i_{\K/\F}:\Cc(\La,m-1,\b_i) \to \Cc(\Ga_\K,m-1,\b_i),
$$
which are injective and $\KK(\Ga_\K)$-equivariant. Moreover, by~\cite[Proposition~6.13]{BSS}, we have $\ll^1(\t)=\ll^2(\t)$ so that $\Cc(\Ga_\K,m-1,\b_1)\cap\Cc(\Ga_\K,m-1,\b_2)\ne\emptyset$. From the case $\K=\F$ above, we find uniformizers $\w_i$ of $\E_i$ with $\w_1\U^1(\Ga_\K)=\w_2\U^1(\Ga_\K)$; in particular, $\w_1\U^1(\La)=\w_2\U^1(\La)$ which proves (iii). 
We also get tame corestrictions $s^\K_i$ on $\C$ relative to $\E_i/\K$ satisfying (iv): for all $k\in\ZZ$ and $x\in\PP_k(\Ga_\K)$,
$$
s_1^\K(x) \equiv s_2^\K(x) \pmod{\PP_{k+1}(\Ga_\K)}.
$$
Finally, if $s_\K$ is any tame corestriction on $\A$ relative to $\K/\F$ then $s_i=s_i^\K\circ s_\K$ are tame corestrictions on $\A$ relative to $\E_i/\F$, which satisfy (iv) since $\PP_{k+1}(\Ga_\K)\subseteq\PP_{k+1}(\La)$.
\end{proof}

\subsection{}
Now we are ready to complete the proof of the translation principle. 

\begin{proof}[Proof of Theorem~\ref{thm:translation}]
For $i=1,2$, we have simple strata $[\La,n,m,\g_i]$ such that
$\Cc(\La,m,\g_1)\cap\Cc(\La,m,\g_2)\ne\emptyset$; these sets of simple
characters are then equal, by~\cite[Theorem~4.16]{BSS}. Moreover, by
Lemma~\ref{lem:translation}, we may replace $[\La,n,m,\g_1]$ by an
equivalent stratum so that $\Cc(\La,m-1,\g_1)=\Cc(\La,m-1,\g_2)$.

Let $\B_i$ denote the $\A$-centralizer of $\F[\g_i]$, let $\V_i$ be a
simple left $\B_i$-module, and let $\D_i$ be the algebra opposite to
$\End_{\B_i}(\V_{\F[\g_i]})$. Denote by $\Ga_i$ the unique (up to
translation) $\Oo_{\D_i}$-lattice sequence in $\V_i$ such that 
$$
\PP_k(\La)\cap\B_i = \PB_k(\Ga_i),\quad k\in\ZZ.
$$

We use Lemma~\ref{lem:crux} to choose uniformizers $\w_i$ for $\F[\g_i]$ such that $\w_1\U^1(\La)=\w_2\U^1(\La)$, and tame corestrictions $s_i$ on $\A$ relative to $\F[\g_i]/\F$ such that, for all $k\in\ZZ$ and $x\in\PP_k(\La)$,
$$
s_1(x) \equiv s_2(x) \pmod{\PP_{k+1}(\La)}.
$$
Again by Lemma~\ref{lem:crux}, the residue fields $k_{\F[\g_i]}$ have
a common image in $\PP_0(\La)/\PP_1(\La)$ so that we may identify
them. Moreover, $\PB_0(\Ga_i)/\PB_1(\Ga_i)$ have a common image in
$\PP_0(\La)/\PP_1(\La)$, as in the proof of Proposition~\ref{prop:derived}.

\medskip

Now let $[\La,n,m-1,\b_1]$ be a simple stratum such that
$[\La,n,m,\b_1]$ is equivalent to $[\La,n,m,\g_1]$. If the stratum
$[\La,n,m,\b_1]$ is itself simple then the result follows from
Lemma~\ref{lem:translation} (applied with $\b_1$ in place of $\g_1$),
so we assume this is not the case. We write
$\b_1=\g_1+b$, with $b\in\PP_{-m}(\La)$, and pick a simple character
$\vartheta\in\Cc(\La,m-1,\g_1)$, so that $\vartheta\psi_b$ is a simple
character in $\Cc(\La,m-1,\b_1)$.

By Proposition~\ref{prop:derived}, the derived stratum
$[\Ga_1,m,m-1,s_1(b)]$ is equivalent to a simple stratum, which is
non-scalar by Proposition~\ref{prop:refinement}, since $k_0(\b_1,\La)
> k_0(\g_1,\La)$. Thus, by Proposition~\ref{prop:simpleminpol},
$[\Ga_1,m,m-1,s_1(b)]$ has irreducible minimum polynomial. However, by
the choices of $\w_i$ and $s_i$, we have
$$
\y(s_1(b),\Ga_1) \equiv \y(s_2(b),\Ga_2) \pmod{\PP_1(\La)}.
$$
In particular (given the identifications we have made), we see that
the strata $[\Ga_i,m,m-1,s_i(b)]$ have the same minimum and
characteristic polynomials. In particular, $[\Ga_2,m,m-1,s_2(b)]$ has
irreducible minimum polynomial so, by
Proposition~\ref{prop:simpleminpol}, it is equivalent to a simple
stratum.  

Finally, by Proposition~\ref{prop:refinement}, there is a simple
stratum $[\La,n,m-1,\b_2]$ equivalent to $[\La,n,m-1,\g_2+b]$ and,
by~\cite[Proposition~2.15]{SS4}, we have
$\vartheta\psi_b\in\Cc(\La,m-1,\b_2)$. In particular,
$\Cc(\La,m-1,\b_1)\cap\Cc(\La,m-1,\b_2)\ne\emptyset$ so,
by~\cite[Theorem~4.16]{BSS}, we have
$\Cc(\La,m-1,\b_1)=\Cc(\La,m-1,\b_2)$ as required.
\end{proof}

\subsection{}
We will need one corollary of the translation principle, which is in fact
a generalization of Proposition~\ref{prop:derived}:

\begin{coro}\label{cor:derived}
Let $[\La,n,m,\g]$ be a simple stratum and let
$[\La,n,0,\b]$ be a simple stratum such that
$\Cc(\La,m,\b)\cap\Cc(\La,m,\g)\ne\emptyset$. Suppose
$\vartheta\in\Cc(\La,m-1,\g)$ and $\theta\in\Cc(\La,m-1,\b)$ coincide on
$\H^{m+1}(\g,\La)=\H^{m+1}(\b,\La)$. Then there is $c\in\PP_{-m}(\La)$
such that $\theta=\vartheta\psi_c$ and, for any such $c$, the derived
stratum $[\Ga_\g,m,m-1,s_\g(c)]$ is equivalent to a simple (or null)
stratum. 
\end{coro}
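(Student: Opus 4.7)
The plan is to first establish the existence of $c$ by Pontryagin duality, then use the translation principle to reduce the derived stratum claim to Proposition~\ref{prop:derived}, and finally compare the resulting parameters.

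For the existence of $c$: since $\Cc(\La,m,\b)\cap\Cc(\La,m,\g)\ne\emptyset$, \cite[Theorem~4.16]{BSS} gives $\Cc(\La,m,\b)=\Cc(\La,m,\g)$ and $\H^{m+1}(\b,\La)=\H^{m+1}(\g,\La)$. Using Lemma~\ref{lem:translation} after refining $[\La,n,m,\b]$ to an equivalent simple stratum, we also identify $\H^m(\b,\La)$ with $\H^m(\g,\La)$. The quotient $\theta\vartheta^{-1}$ is then a character of $\H^m(\g,\La)$ trivial on $\H^{m+1}(\g,\La)$, so it lifts via the Pontryagin duality $\PP_{-m}(\La)/\PP_{1-m}(\La)\times\U^m(\La)/\U^{m+1}(\La)\to\CC^\times$, $(x,1+y)\mapsto\psi_\F\circ\tr_{\A/\F}(xy)$, to a character $\psi_c|_{\H^m(\g,\La)}$ for some $c\in\PP_{-m}(\La)$. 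Any two such $c$ differ by an element $d$ with $\psi_d$ trivial on $\U^m(\Ga_\g)\subseteq\H^m(\g,\La)$, forcing $s_\g(d)\in\PB_{1-m}(\Ga_\g)$; hence the equivalence class of $[\Ga_\g,m,m-1,s_\g(c)]$ is well-defined.

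For the derived stratum claim, apply the translation principle: iterating Proposition~\ref{prop:refinement} from the largest level at which $[\La,n,k,\b]$ is simple down to level $m-1$ produces a simple stratum $[\La,n,m-1,\b_1]$ equivalent to $[\La,n,m-1,\b]$, so $\Cc(\La,m-1,\b_1)=\Cc(\La,m-1,\b)$. Let $[\La,n,m,\g_1]$ be a simple stratum equivalent to $[\La,n,m,\b_1]$; then $\Cc(\La,m,\g_1)=\Cc(\La,m,\g)$. Theorem~\ref{thm:translation}, applied to simple strata $[\La,n,m,\g_1]$, $[\La,n,m,\g]$ with refinement $[\La,n,m-1,\b_1]$, yields a simple stratum $[\La,n,m-1,\b_2]$ with $[\La,n,m,\b_2]$ equivalent to $[\La,n,m,\g]$ and $\Cc(\La,m-1,\b_2)=\Cc(\La,m-1,\b)$. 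Writing $\b_2=\g+c''$ with $c''\in\PP_{-m}(\La)$, Proposition~\ref{prop:derived} applied to the pure stratum $[\La,n,m,\b_2]$ equivalent to the simple stratum $[\La,n,m,\g]$ shows that $[\Ga_\g,m,m-1,s_\g(c'')]$ is equivalent to a simple or null stratum.

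Finally, we must relate $c$ to $c''$. Since $[\La,n,m,\g]$ is simple with $m\ge 1$, we have $k_0(\g,\La)\le-m-1$, so $[\La,n,m-1,\g]$ is also simple. Then \cite[Proposition~2.15]{SS4}, applied to $[\La,n,m-1,\g]$ and $[\La,n,m-1,\b_2]=[\La,n,m-1,\g+c'']$, gives $\theta=\widetilde\vartheta\psi_{c''}$ for some $\widetilde\vartheta\in\Cc(\La,m-1,\g)$. Comparing with $\theta=\vartheta\psi_c$ yields $\widetilde\vartheta\vartheta^{-1}=\psi_{c-c''}$ on $\H^m(\g,\La)$, with both $\vartheta,\widetilde\vartheta\in\Cc(\La,m-1,\g)$. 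The main obstacle is to invoke the inductive structure of simple characters to show that $s_\g(c-c'')$ lies in $\E_\g+\PB_{1-m}(\Ga_\g)$; this says the corestricted difference of two simple characters attached to the same simple stratum is a central scalar modulo the next filtration. Granting this, $[\Ga_\g,m,m-1,s_\g(c)]$ is equivalent to $[\Ga_\g,m,m-1,s_\g(c'')+\lambda]$ for some $\lambda\in\E_\g$, and since adding a central scalar preserves the property of being equivalent to a simple or null stratum (the underlying field $\E_\g[\alpha+\lambda]=\E_\g[\alpha]$ and the critical exponent $k_0$ is invariant under such shifts), we conclude.
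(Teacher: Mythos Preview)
Your argument follows the paper's approach closely: reduce via the translation principle to a simple stratum $[\La,n,m-1,\b_2]$ with $[\La,n,m,\b_2]$ equivalent to $[\La,n,m,\g]$, apply Proposition~\ref{prop:derived} to see that $[\Ga_\g,m,m-1,s_\g(c'')]$ is simple or null (where $c''=\b_2-\g$), and then compare $c$ with $c''$. Your treatment of the existence of $c$ and of the well-definedness of the equivalence class of the derived stratum is more explicit than the paper's, but correct.

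However, there is a genuine gap in your final step. You write that ``the main obstacle is to invoke the inductive structure of simple characters to show that $s_\g(c-c'')$ lies in $\E_\g+\PB_{1-m}(\Ga_\g)$ \ldots\ Granting this, \ldots''. This is precisely the crux, and you have not proved it; the vague appeal to ``the inductive structure of simple characters'' is not the right tool here. The paper's argument is short and direct: both $\vartheta$ and $\widetilde\vartheta$ lie in $\Cc(\La,m-1,\g)$, hence each is intertwined by every element of $\B_\g^\times$ (intertwining formula for simple characters). Therefore their ratio $\psi_{c-c''}$ is also intertwined by all of $\B_\g^\times$. Restricting to $\U^m(\Ga_\g)\subseteq\H^m(\g,\La)$, the character $\psi_{s_\g(c-c'')}$ of $\U^m(\Ga_\g)$ is intertwined by all of $\B_\g^\times$, and Lemma~\ref{lem:scalar}, applied inside the algebra $\B_\g$, yields $\d\in\F[\g]$ with $s_\g(c-c'')\equiv\d\pmod{\PB_{1-m}(\Ga_\g)}$. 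With this in hand, the conclusion follows exactly as you indicate.
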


\begin{proof} The entire statement depends only on the equivalence
class of the stratum $[\La,n,m-1,\b]$ so, by replacing with an
equivalent stratum, we may assume this stratum is simple.
Moreover, that such $c\in\PP_{-m}(\La)$ exists is clear so we need
only prove that the derived stratum is simple.

By the translation principle (Theorem~\ref{thm:translation}),
there is a simple stratum $[\La,n,m-1,\b']$ such that $[\La,n,m,\b']$
is equivalent to $[\La,n,m,\g]$ and 
$\Cc(\La,m-1,\b')=\Cc(\La,m-1,\b)$. 
Then, by~\cite[Proposition~2.15]{SS4}, we have
$\t|\H^m(\b',\La)=\vartheta'\psi_{c'}$, for some
$\vartheta'\in\Cc(\La,m-1,\g)$ and $c'=\b'-\g$. Moreover, the
derived stratum $[\Ga_\g,m,m-1,s_\g(c')]$ is simple (or null),
by Proposition~\ref{prop:derived}. 

Now we have two simple characters,
$\vartheta',\vartheta|\H^m(\g,\La)$ in $\Cc(\La,m-1,\g)$, which
differ by the character $\psi_{c-c'}$. Since the simple characters
are intertwined by $\B_\g^\times$, so is $\psi_{c-c'}$ and, in
particular, its restriction to $\U^m(\Ga_\g)$. Then,
by Lemma~\ref{lem:scalar}, there is a $\d\in \F[\g]$ such that
$s_\g(c-c')-\d\in\PB_{1-m}(\Ga)$. In particular,
$[\Ga_\g,m,m-1,s_\g(c)]$ is equivalent to
$[\Ga_\g,m,m-1,s_\g(c')+\d]$, which is simple (or null).
\end{proof}


\section{Endo-classes and common approximations}\label{S.common}

In this section, we collect together some results concerning
endo-classes of ps-characters and their relationship with common
approximations (see~\cite[\S8]{BKsemi}). Much of this is implicit
in~\cite{BKsemi} in the split case.

\subsection{} Let $(k,\b)$ be a simple pair and, for $i=1,2$, let $[\La_i,n_i,m_i,\h_i(\b)]$ be a realization in a simple central $\F$-algebra $\A_i$. According to~\cite[\S3.3]{VS1}, there is a canonical \emph{transfer} map
$$
\tau:\Cc(\La_1,m_1,\h_1(\b)) \to \Cc(\La_2,m_2,\h_2(\b)).
$$
Denote by $\boldsymbol{\EuScript{C}}_{(k,\b)}$ the set of pairs 
$([\La,n,m,\h(\b)],\t)$ made of a realization $[\La,n,m,\h(\b)]$ 
of $(k,\b)$ in a simple central $\F$-algebra and a simple character
$\t\in\Cc(\La,m,\h(\b))$. Then the transfer maps $\tau$ induce an equivalence relation on $\boldsymbol{\EuScript{C}}_{(k,\b)}$.

\begin{defi} 
A \emph{potential simple character} over $\F$ (or \emph{ps-character}
for short) is a triple $(\Theta,k,\b)$ made of a simple pair $(k,\b)$
over $\F$ and an equivalence class $\Theta$ in 
$\boldsymbol{\EuScript{C}}_{(k,\b)}$.  
\end{defi}

When the context is clear, we will often denote by $\Theta$ the 
ps-character $(\Theta,k,\b)$.
Given a realization $[\La,n,m,\h(\b)]$ of $(k,\b)$, we will denote 
by $\Theta(\La,m,\h)$ 
the simple character $\t$ such that the pair 
$([\La,n,m,\h(\b)],\t)$ belongs to $\Theta$.

\begin{defi} 
For $i=1,2$, let $(\Theta_{i},k_{i},\b_{i})$ be a ps-character over $\F$. We say that these ps-characters are \emph{endo-equivalent}, denoted:
$$
\Theta_1\thickapprox\Theta_2,
$$
if $k_1=k_2$ and 
$[\F(\b_1):\F]=[\F(\b_2):\F]$, and if there exist a 
simple central $\F$-algebra $\A$ and realizations
$[\La,n_i,m_i,\h_i(\b_i)]$ of $(k_i,\b_i)$ in $\A$, 
for $i=1,2$, such that the simple characters 
$\Theta_1(\La,m_1,\h_1)$ and $\Theta_2(\La,m_2,\h_2)$ intertwine in 
$\mult\A$.
\end{defi}

That this defines an equivalence relation on ps-characters follows from a major result in~\cite{BSS}:

\begin{prop}[{\cite[Theorem~1.11]{BSS}}]
\label{prop:IICpschars}
For $i=1,2$, let $(\Theta_{i},k_{i},\b_{i})$ be a ps-character over $\F$, 
and suppose that $\Theta_{1}\thickapprox\Theta_{2}$. 
Let $\A$ be a simple central $\F$-algebra and let 
$[\La,n_i,m_i,\h_i(\b_i)]$ be realizations of 
$(k_{i},\b_i)$ in $\A$, for $i=1,2$. 
Then $\t_1=\Theta_1(\La,m_1,\h_1)$ and 
$\t_2=\Theta_2(\La,m_2,\h_2)$ intertwine in $\A^{\times}$.
\end{prop}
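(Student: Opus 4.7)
The plan is to combine the intertwining-of-simple-pairs statement (Proposition~\ref{prop:IICsimplepairs}) with the rigidity afforded by Proposition~\ref{prop:IICwithK} and the canonical transfer structure of ps-characters. From the endo-equivalence $\Theta_1\thickapprox\Theta_2$ we extract in particular that $k_1=k_2=k$, that $[\F(\b_1):\F]=[\F(\b_2):\F]$, and that the underlying simple pairs $(k,\b_i)$ are endo-equivalent. Proposition~\ref{prop:IICsimplepairs} applied to the realizations $[\La,n_i,m_i,\h_i(\b_i)]$ in $\A$ thus provides an intertwining of the underlying simple strata in $\A^\times$.

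To upgrade this to an intertwining of the simple characters, I would first invoke Lemma~\ref{lem:embedtype} (noting that $e(\F(\b_1)/\F)=e(\F(\b_2)/\F)$ and $f(\F(\b_1)/\F)=f(\F(\b_2)/\F)$ are forced by endo-equivalence of the simple pairs) to replace $\h_2$ by an $\F$-algebra embedding $\h_2^\sharp$ of $\F(\b_2)$ into $\A$ such that $(\h_1(\F(\b_1)),\La)$ and $(\h_2^\sharp(\F(\b_2)),\La)$ have the same embedding type. Since $\h_2^\sharp$ is an $\A^\times$-conjugate of $\h_2$ (up to a $\La$-normalizer adjustment), the character $\Theta_2(\La,m_2,\h_2^\sharp)$ differs from $\t_2$ by an $\A^\times$-conjugation via the canonical transfer, so it is enough to intertwine $\t_1$ with $\Theta_2(\La,m_2,\h_2^\sharp)$ in $\A^\times$. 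With matching embedding types, the second assertion of Proposition~\ref{prop:IICsimplepairs} produces $u\in\KK(\La)$ conjugating the stratum of $\h_1$ onto that of $\h_2^\sharp$, and this reduces us to showing that two specific simple characters lying in the common set $\Cc(\La,m,\b)$ of a single simple stratum $[\La,n,m,\b]$ are intertwined.

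At this point the question is how to pin down these two specific characters, given that their compatibility is only directly known through their intertwining in the auxiliary algebra $\A'$ furnished by the definition of endo-equivalence. I would exploit this hypothesis by applying Proposition~\ref{prop:IICwithK} in $\A'$ (after arranging matching embedding types there via Lemma~\ref{lem:embedtype}), which strengthens intertwining of $\Theta_1(\La',m_1',\h_1')$ and $\Theta_2(\La',m_2',\h_2')$ into actual $\KK(\La')$-conjugacy. This conjugacy is a rigid identification of $\Theta_1$ and $\Theta_2$ as ps-characters (essentially encoding a $\Gal$-twist), and, by the canonical nature of the transfer maps $\tau$, it propagates to every other realization in every other $\A$. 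The main obstacle is precisely this propagation: establishing that the $\KK(\La')$-conjugacy relation in $\A'$ transports faithfully into an $\A^\times$-intertwining relation in $\A$. I would handle this by first passing to $\La^\ddag$ (cf.~\S\ref{SS:passtoddag}) to reduce to the sound case, then using interior lifting and base change as in the proof of Lemma~\ref{lem:crux} to reduce to the split central simple $\F$-algebra $\End_\F(\V)$, where the analogue of the statement is~\cite[Theorem~8.7]{BKsemi}, and finally descending back through the base-change maps, which by~\cite[Proposition~7.6]{BSS} are compatible with transfer of ps-characters. This yields the desired intertwining $g\in\A^\times$ of $\t_1$ and $\t_2$.
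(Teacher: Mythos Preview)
The paper does not give its own proof of this proposition: it is quoted verbatim as \cite[Theorem~1.11]{BSS}, introduced as ``a major result in~\cite{BSS}'', and no argument is supplied. So there is nothing in this paper to compare your proposal against.

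That said, your sketch is broadly in the spirit of how the result is actually proved in \cite{BSS}: one does reduce (via the $\La^\ddag$ construction) to sound strata and then uses interior lifting and unramified base change to reach the split case, where the statement goes back to Bushnell--Henniart~\cite{BH}. A few cautions, though. First, be wary of circularity: you invoke Lemma~\ref{lem:crux} and \cite[Proposition~7.6]{BSS}, but in \cite{BSS} the base-change and interior-lifting compatibilities are developed alongside Theorem~1.11 itself, so a self-contained argument must track carefully which of those inputs are logically prior. Second, your ``propagation'' step---carrying a $\KK(\La')$-conjugacy in the auxiliary algebra $\A'$ over to an $\A^\times$-intertwining in $\A$ via transfer---is exactly the hard content of the theorem, and in \cite{BSS} it occupies several sections (their \S\S5--8); it is not something one can dispatch by citing compatibility of transfer. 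Third, the split-case reference you want is \cite{BH} rather than \cite{BKsemi}.
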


In the situation of Proposition~\ref{prop:IICpschars}, if $(F[\h_i(\b_i)],\La)$ have the same embedding type then we can apply Proposition~\ref{prop:IICwithK} to conclude that the realizations $\t_1$, $\t_2$ are actually conjugate.

\medskip

We will use the common convention that, for each $t\ge 0$, there is the
\emph{trivial ps-character $\Theta^{(t)}_0$}, whose realization on any lattice
sequence $\La$ is the trivial character of the group $\U^{t+1}(\La)$;
then $\{\Theta^{(t)}_0\}$ forms a singleton equivalence class under
endo-equivalence.

\subsection{}
Let $[\La,n,0,\b]$ be a simple stratum in $\A=\End_\D(\V)$. 
Let $\t\in\Cc(\La,0,\b)$ be a simple stratum and denote by $(\T,0,\b)$
the ps-character that it determines -- that is,
$\t$ is a realization of $\T$.

For $t\ge 0$, let $[\La,n,t,\b^{(t)}]$ be a simple stratum equivalent
to the pure stratum $[\La,n,t,\b]$ and write
$\E^{(t)}=\F[\b^{(t)}]$. Then the restriction 
$\t|{\H^{t+1}(\b,\La)}$ is a simple character in
$\Cc(\La,t,\b^{(t)})$ and we denote by $(\T^{(t)},k^{(t)},\b^{(t)})$ the ps-character
determined by this restriction,  
with $k^{(t)}=\lfloor t/e(\La|\Oo_{\E^{(t)}})\rfloor$.

\medskip

{\bf Remark.} We allow the case $t\ge n$, in which case we interpret
the stratum $[\La,n,t,\b]$ to be equivalent to a null stratum
$[\La,t,t,0]$ and $\T^{(t)}$ is the trivial ps-character.

\begin{lemm}\label{lem:PSembed} 
With notation as above let $(\T_\g,k,\g)$ be a ps-character which is endo-equivalent to $\T^{(t)}$. Then
there is an embedding $\iota_\g:\F[\g]\hookrightarrow \A$ such that 
$$
\t|{\H^{t+1}(\b,\La)} = \T_\g(\La,t,\iota_\g).
$$
\end{lemm}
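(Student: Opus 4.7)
The plan is to produce a first candidate embedding $\iota_\g:\F[\g]\hookrightarrow\A$ using Lemma~\ref{lem:embedtype}, and then correct it by a $\KK(\La)$-conjugation provided by the intertwining-implies-conjugacy result Proposition~\ref{prop:IICwithK}.

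First, I would extract numerical invariants from the endo-equivalence $\T_\g\thickapprox\T^{(t)}$: this forces $k=k^{(t)}$ and $[\F[\g]:\F]=[\F[\b^{(t)}]:\F]$, and moreover the ramification and residue class degrees of $\F[\g]/\F$ and $\F[\b^{(t)}]/\F$ agree (this last point should fall out of Propositions~\ref{prop:IICsimplepairs} and~\ref{prop:IICpschars} applied to realizations on a common lattice sequence, the $e,f$ being invariants that can be read off any such realization). Applying Lemma~\ref{lem:embedtype} to the embedding $(\F[\b^{(t)}],\La)$ in $\A$, I obtain an $\F$-algebra embedding $\iota_\g:\F[\g]\hookrightarrow\A$ such that $(\iota_\g(\F[\g]),\La)$ has the same embedding type as $(\F[\b^{(t)}],\La)$. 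Setting $n_\g=-\v_\La(\iota_\g(\g))$, the quadruple $[\La,n_\g,t,\iota_\g(\g)]$ is a realization of the simple pair $(k,\g)$ in $\A$, and $\T_\g(\La,t,\iota_\g)\in\Cc(\La,t,\iota_\g(\g))$ is a first candidate for the desired simple character.

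Next, I would match this candidate to $\t|{\H^{t+1}(\b,\La)}$. By Proposition~\ref{prop:IICpschars}, the two simple characters $\T_\g(\La,t,\iota_\g)$ and $\t|{\H^{t+1}(\b,\La)}\in\Cc(\La,t,\b^{(t)})$ intertwine in $\mult\A$. Since their underlying embeddings have the same embedding type by construction, Proposition~\ref{prop:IICwithK} furnishes $u\in\KK(\La)$ such that $\Cc(\La,t,\b^{(t)})=\Cc(\La,t,u\iota_\g(\g)u^{-1})$ and $\T_\g(\La,t,\iota_\g)(x)=\t(uxu^{-1})$ for all $x\in\H^{t+1}(\iota_\g(\g),\La)$.

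Finally, I would twist the embedding by $u$, setting $\iota_\g':=u\iota_\g(\cdot)u^{-1}$, which is still an embedding of the same type since $u\in\KK(\La)$. The key observation is that the transfer map between two realizations of a ps-character on a common lattice sequence related by conjugation is precisely pullback under that conjugation (by the naturality of the construction of simple characters in~\cite{SS4}), hence $\T_\g(\La,t,\iota_\g')(y)=\T_\g(\La,t,\iota_\g)(u^{-1}yu)=\t(y)$ for all $y\in\H^{t+1}(\iota_\g'(\g),\La)=\H^{t+1}(\b,\La)$, as required. The hard part will be the first step: confirming that endo-equivalence forces matching $e,f$ invariants so that Lemma~\ref{lem:embedtype} applies; once this is in place, the rest is a clean assembly of results cited above.
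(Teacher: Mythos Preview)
Your proposal is correct and follows essentially the same route as the paper: produce an embedding of $\F[\g]$ with the same embedding type as $(\F[\b^{(t)}],\La)$ via Lemma~\ref{lem:embedtype}, then use Propositions~\ref{prop:IICpschars} and~\ref{prop:IICwithK} to conjugate the realization onto $\t|\H^{t+1}(\b,\La)$. The only difference is bibliographic: for the equality of $e,f$ (and of $k_\F$) the paper invokes \cite[Lemma~4.8]{BSS} directly, whereas you propose to extract them from intertwining realizations; either works, and the rest of your argument matches the paper's proof line for line.
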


\begin{proof} Put $\E_\g=\F[\g]$. Since the ps-characters $\T_\g$,
$\T^{(t)}$ are endo-equivalent, the fields $\E_\g$, $\E^{(t)}$
have the same invariants by~\cite[Lemma~4.8]{BSS}: 
$$
e(\E_\g/\F)=e(\E^{(t)}/\F), \ \ 
f(\E_\g/\F)=f(\E^{(t)}/\F),
 \hbox{ \ and \ } 
k_\F(\g)=k_\F(\b^{(t)}).
$$
Then, by Lemma~\ref{lem:embedtype}, there is an embedding
$\iota_\g:\K_\g\hookrightarrow\A$ such that
$[\La,n,t,\iota_\g(\g)]$ is a pure stratum with 
the same embedding type as $[\La,n,t,\b^{(t)}]$, which is simple since
$k_\F(\g)=k_\F(\b^{(t)})$ and $[\La,n,t,\b^{(t)}]$ is simple. 

Finally, since the ps-characters are endo-equivalent,
by Propositions~\ref{prop:IICpschars} and~\ref{prop:IICwithK}, the realization
$\T_\g(\La,t,\iota_\g)$ is conjugate to
$\t|{\H^{t+1}(\b,\La)}$ by some $u\in\KK(\La)$. 
Conjugating our embedding $\iota_\g$ by $u$ gives
the desired embedding. 
\end{proof}

For $j=1,\ldots,r$, let $\A^j=\End_\D(\V^j)$, let $[\La^j,n_j,0,\b_j]$
be a simple stratum in $\A^j$, write $\E_j=\F[\b_j]$, and let 
$\t_j\in\Cc(\La^j,0,\b_j)$. We normalize so that the lattice sequences
$\La^j$ have the same $\Oo_\F$-period $e$.  

As above, for $t \ge 0 $, let $[\La^j,n_j,t,\b_j^{(t)}]$ be a simple
stratum equivalent to the pure stratum $[\La^j,n_j,t,\b_j]$, and write
$\E_j^{(t)}=\F[\b_j^{(t)}]$. The restriction
$\t_j|{\H^{t+1}(\b_j,\La^j)}$ is a simple character in
$\Cc(\La^j,t,\b_j^{(t)})$ and we denote by $(\T_j^{(t)},k_j^{(t)},\b_j^{(t)})$ the
ps-character determined by this restriction, 
with $k_j^{(t)}=\lfloor t/e(\La|\Oo_{\E_j^{(t)}})\rfloor$. 

\medskip

We put $\V=\V^1\oplus\cdots\oplus\V^r$, and set
$\La=\La^1\oplus\cdots\oplus\La^r$, a lattice sequence in $\V$ of
$\Oo_\F$-period $e$. Write $\A=\End_\D(\V)$ and denote by $\e^j$ the
idempotents in $\PP_0(\La)$ corresponding to the decomposition of
$\V$. We put $\b=\sum_{j=1}^r \e^j\b_j\e^j$. 
Then $[\La,n,0,\b]$ is a stratum in $\A$, with
$n=\max_j n_j$. We write $\LL$ for the stabilizer in
$\G=\Aut_\D(\V)$ of the decomposition $\V=\V^1\oplus\cdots\oplus\V^r$,
and $\A_\LL$ for its stabilizer in $\A$.

\begin{defi} A \emph{common approximation of $(\t_j)$ of
level $t$ on $\La$} is a pair $([\La,n,t,\g],\vartheta)$ consisting of: a simple stratum
$[\La,n,t,\g]$ with $\g\in\A_\LL$ and $0\le t\le n$, such that
$$
\H^{t+1}(\g,\La)\cap\LL  =\prod_{j=1}^r\H^{t+1}(\b_j,\La^j);
$$
and a simple character $\vartheta\in\Cc(\La,0,\g)$ such that
$$
\vartheta|\H^{t+1}(\g,\La)\cap\LL  = \t_1\otimes\cdots\otimes\t_r.
$$
\end{defi}

When we have such a common approximation, we will identify $\g$ with
its images $\e^j\g\e^j$ in $\A^j$.

\begin{lemm}\label{lem:commonPS} 
Let $0\le t\le n$. Then the following are equivalent:
\begin{itemize}
\item[(i)] There is a common approximation of $(\t_j)$ of
level $t$ on $\La$.
\item[(ii)] The ps-characters $\T_j^{(t)}$ are endo-equivalent.
\end{itemize} 
\end{lemm}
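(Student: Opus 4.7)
The plan is to prove the two directions separately: (i) $\Rightarrow$ (ii) is a direct verification, while (ii) $\Rightarrow$ (i) is a construction based on Lemma~\ref{lem:PSembed} together with the block-diagonal compatibility of simple characters under the decomposition $\V=\bigoplus_j\V^j$.

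For (i) $\Rightarrow$ (ii), I would start from a common approximation $([\La,n,t,\g],\vartheta)$. Since $\g\in\A_\LL$ is block-diagonal with components $\g^j=\e^j\g\e^j$, and $\F[\g]$ is a field (by simplicity of the stratum), each block stratum $[\La^j,n_j,t,\g^j]$ is a realization of the same simple pair as $\g$. The character $\vartheta|\H^{t+1}(\g,\La)$ is then a simple character whose restriction to the block-diagonal subgroup $\H^{t+1}(\g,\La)\cap\LL=\prod_j\H^{t+1}(\g^j,\La^j)$ breaks up blockwise into transfers of one another. The hypothesis identifies these blockwise restrictions with $\t_j|\H^{t+1}(\b_j,\La^j)$, so every $\T_j^{(t)}$ represents the same transfer class of ps-characters and the $\T_j^{(t)}$ are therefore endo-equivalent.

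For (ii) $\Rightarrow$ (i), pick a representative $(k,\d)$ of the common endo-class of the $\T_j^{(t)}$ together with a ps-character $\T$ lifting it. Applying Lemma~\ref{lem:PSembed} in each $\A^j$ with $\T_\g=\T$, I would obtain embeddings $\iota_j:\F[\d]\hookrightarrow\A^j$ such that $[\La^j,n^*,t,\iota_j(\d)]$ is a simple realization of $(k,\d)$ and
$$
\t_j|\H^{t+1}(\b_j,\La^j) = \T(\La^j,t,\iota_j),
$$
where $n^*$ is the common $\La^j$-valuation of $\iota_j(\d)$, determined by $\d$ and the common $\Oo_\F$-period $e$. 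Assembling into a block-diagonal embedding $\iota=\bigoplus_j\iota_j:\F[\d]\hookrightarrow\A_\LL$ gives an element $\g=\iota(\d)\in\A_\LL$ generating a field isomorphic to $\F[\d]$ and normalizing $\La$. I would then verify that $[\La,n^*,t,\g]$ is a simple stratum by a block decomposition of the modules $\mathfrak{n}_k(\g,\La)$, using the critical-exponent inequality that holds in each block. Finally, taking $\vartheta_t=\T(\La,t,\iota)\in\Cc(\La,t,\g)$ and extending to a simple character $\vartheta\in\Cc(\La,0,\g)$, the block compatibility of realizations of $\T$ yields
$$
\vartheta_t|\H^{t+1}(\g,\La)\cap\LL = \bigotimes_j \T(\La^j,t,\iota_j) = \bigotimes_j \t_j|\H^{t+1}(\b_j,\La^j),
$$
so that $([\La,n^*,t,\g],\vartheta)$ is the desired common approximation.

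The main obstacle is the block-diagonal compatibility that underpins both directions: showing that simplicity of $[\La,n^*,t,\g]$ descends from simplicity of each block stratum; that $\H^{t+1}(\g,\La)\cap\LL=\prod_j\H^{t+1}(\iota_j(\d),\La^j)$; and that the realization of the common ps-character on the direct-sum lattice sequence restricts blockwise to the block realizations. All of these are implicit in the block-decomposition formalism of~\cite[\S2]{SS4} for strata on direct sums of lattice sequences, but the statement requires assembling them carefully. A minor bookkeeping point is the clash between the $n=\max_j n_j$ of the preceding paragraph and the $n^*$ appearing in the resulting common approximation, which need not coincide but is accommodated by the definition.
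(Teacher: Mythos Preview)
Your proposal is correct and follows essentially the same route as the paper: for (i)$\Rightarrow$(ii) you use the block decomposition of $\H^{t+1}(\g,\La)\cap\LL$ and the transfer compatibility (the paper cites \cite[Th\'eor\`eme~2.17]{SS4} for this), and for (ii)$\Rightarrow$(i) you apply Lemma~\ref{lem:PSembed} in each block and assemble diagonally, exactly as the paper does.

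Two small remarks. First, your verification of simplicity of $[\La,n^*,t,\g]$ via the modules $\mathfrak{n}_k$ is unnecessary: the diagonal map $\iota=\bigoplus_j\iota_j$ is an $\F$-algebra embedding of $\F[\d]$ into $\A$, and $\La$ is normalized by its image, so $[\La,n^*,t,\iota(\d)]$ is a \emph{realization} of the simple pair $(k,\d)$ and hence automatically simple. Second, your worry about $n$ versus $n^*$ dissolves on inspection: if the $\T_j^{(t)}$ are endo-equivalent and non-trivial then the $\b_j^{(t)}$ share the same normalized valuation, and since the $\La^j$ have equal $\Oo_\F$-period this forces all $n_j$ to coincide, whence $n^*=n=\max_j n_j$; in the remaining case all $\T_j^{(t)}$ are trivial, so $t\ge n_j$ for every $j$, hence $t=n$ and one uses the null-stratum convention.
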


\begin{proof} (ii)$\Rightarrow$(i) Let $(\T_\g,k,\g)$ be a
ps-character which is
endo-equivalent to all $\T_j^{(t)}$. Then, by
Lemma~\ref{lem:PSembed}, for each $j$ there is an embedding 
$\ii_j:\F[\g]\hookrightarrow \A^j$ such that 
$\T_\g(\La^j,t,\ii_j)=\t_j|\H^{t+1}(\b_j,\La^j)$. Denote by $\ii$ the 
diagonal embedding $\ii:\F[\g]\hookrightarrow \bigoplus_{j=1}^r \A^j
\subseteq \A$ and let $\vartheta$ be any simple character in
$\Cc(\La,0,\ii(\g))$ which restricts to $\T_\g(\La,t,\ii)$ on
$\H^{t+1}(\g,\La)$. Then $([\La,n,t,\ii(\g)],\vartheta)$ is a common
approximation as required.

(i)$\Rightarrow$(ii) Suppose $([\La,n,t,\g],\vartheta)$ is a common
approximation of $(\t_j)$ of level $t$ on $\La$.
Then the characters $\t_j|\H^{t+1}(\b_j,\La^j)$ are
simple characters in $\Cc(\La^j,t,\g)$ and,
by~\cite[Th\'eor\`eme~2.17]{SS4}, these characters are all
transfers of each other relative to $\g$; hence the corresponding
ps-characters (which are 
supported by the simple pair $(k,\g)$, with $k=\lfloor
t/e(\La|\Oo_{\F[\g]})\rfloor$) are endo-equivalent.
\end{proof}

We suppose now that there is a common approximation
$([\La,n,t,\g],\vartheta)$ of $(\t_j)$. Denote by $\B_\g$ the
$\A$-centralizer of $\E_\g=\F[\g]$, by $\V_\g$ a simple left $\B_\g$-module,
by $\D_\g$ the opposite algebra to $\End_{\B_\g}(\V_\g)$, and by
$s_\g$ a tame corestriction on $\A$. Note that, since $\g\in\A_\LL$,
the restriction of $s_\g$ to $\A^j$ is also a tame corestriction
(see~\cite[Proposition~2.26]{SS4}). Also, the idempotents $\e^j$
lie in $\B_\g$ so correspond to a decomposition
$\V_\g=\V^1_\g\oplus\cdots\oplus\V^r_\g$.  
Let $\Ga_\g$ be an $\Oo_{\D_\g}$-lattice sequence in $\V_\g$ such that
$\PP_n(\La)\cap\B_\g=\PB_n(\Ga_\g)$, for all $n\in\Z$, and put
$\Ga_\g^j=\Ga_\g\cap\V_\g^j$, for $1\le j\le r$.

Since $\t_j$ and $\vartheta$ coincide on $\H^{t+1}(\g,\La)$,
Corollary~\ref{cor:derived} says that there is 
$c_j\in\PP_{-t}(\La^j)$ 
such that $\t_j|\H^{t}(\beta_j,\La^j) = \vartheta\psi_{c_j}$, and that
the derived stratum $[\Ga^j_\g,t,t-1,s_\g(c_j)]$ is equivalent to a
simple (or null) stratum. The following result is a generalization of
Corollary~\ref{cor:minpolpair}.

\begin{coro}\label{cor:endocommon}
In the situation above, the derived strata
$[\Ga^j_\g,t,t-1,s_\g(c_j)]$ have the same minimum polynomial
if and only if the ps-characters $\T_j^{(t-1)}$ are endo-equivalent.
\end{coro}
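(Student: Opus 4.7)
The bridge between the two conditions is Lemma~\ref{lem:commonPS}, which identifies endo-equivalence of the $\T_j^{(t-1)}$ with the existence of a common approximation of $(\theta_j)$ at level $t-1$. The other key tools are Corollary~\ref{cor:minpol}, relating minimum polynomials of block-diagonal strata to those of their blocks, and Proposition~\ref{prop:refinement}, which passes from an equivalent-to-simple derived stratum to a refinement at the next level.

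For $(\Rightarrow)$, I would assemble $c := \sum_{j=1}^r \e^j c_j \e^j \in \A_\LL \cap \PP_{-t}(\Lambda)$. Because $\gamma \in \A_\LL$, the tame corestriction $s_\gamma$ respects the block decomposition (cf.~\cite[Proposition~2.26]{SS4}), so $[\Gamma_\gamma, t, t-1, s_\gamma(c)]$ is block-diagonal with blocks $[\Gamma^j_\gamma, t, t-1, s_\gamma(c_j)]$. A common minimum polynomial forces, via Corollary~\ref{cor:minpol}, the stratum $[\Gamma_\gamma, t, t-1, s_\gamma(c)]$ to be equivalent to a simple (or null) stratum; Proposition~\ref{prop:refinement} then produces a simple stratum $[\Lambda, n, t-1, \gamma']$ equivalent to $[\Lambda, n, t-1, \gamma + c]$, which may be arranged to lie in $\A_\LL$. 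By~\cite[Proposition~2.15]{SS4}, $\vartheta \psi_c \in \Cc(\Lambda, t-1, \gamma')$; extending it to a level-$0$ character $\vartheta' \in \Cc(\Lambda, 0, \gamma')$ yields a common approximation at level $t-1$, and Lemma~\ref{lem:commonPS} concludes.

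For $(\Leftarrow)$, Lemma~\ref{lem:commonPS} first yields a common approximation $([\Lambda, n, t-1, \gamma_1], \vartheta_1)$ at level $t-1$. Block by block, both $\vartheta|{\A^j}$ and $\vartheta_1|{\A^j}$ restrict to $\theta_j|{H^{t+1}(\beta_j, \Lambda^j)}$, so letting $\tilde\gamma_1^j$ give a simple stratum at level $t$ equivalent to $[\Lambda^j, n^j, t, \gamma_1^j]$, the translation principle (Theorem~\ref{thm:translation}) applied in $\A^j$ furnishes a simple stratum $[\Lambda^j, n^j, t-1, \beta_j^*]$ with $[\Lambda^j, n^j, t, \beta_j^*]$ equivalent to $[\Lambda^j, n^j, t, \gamma^j]$ and $\Cc(\Lambda^j, t-1, \beta_j^*) = \Cc(\Lambda^j, t-1, \gamma_1^j)$. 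Writing $\beta_j^* = \gamma^j + b_j$, Proposition~\ref{prop:derived} shows each $[\Gamma^j_\gamma, t, t-1, s_\gamma(b_j)]$ is equivalent to a simple (or null) stratum, while the identification $\vartheta_1|{\A^j} = \vartheta|{\A^j} \cdot \psi_{b_j}$ on $H^t(\beta_j, \Lambda^j)$ (by~\cite[Proposition~2.15]{SS4}) gives $b_j \equiv c_j \pmod{\PP_{1-t}(\Lambda^j)}$. Assembling $\beta^* := \gamma + \sum_j b_j \in \A_\LL$, the task reduces to showing that $[\Lambda, n, t-1, \beta^*]$ is equivalent to a simple stratum, since the reverse direction of Corollary~\ref{cor:minpol} then forces the block minimum polynomials to coincide.

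The main obstacle is this final local-to-global assembly in $(\Leftarrow)$: the block-wise simple stratum data must aggregate into a global simple stratum at level $t-1$. I expect to handle this either by applying Theorem~\ref{thm:translation} globally -- after verifying $\Cc(\Lambda, t, \gamma) \cap \Cc(\Lambda, t, \tilde\gamma_1) \ne \emptyset$ from the compatibility of $\vartheta$ and $\vartheta_1$ on $H^{t+1}(\gamma, \Lambda)$, possibly after adjusting $\vartheta_1$ using the intertwining properties of simple characters -- or by first reducing to the sound strict case via $\Lambda^\ddag$ as in~\S\ref{SS:passtoddag}, where the block decomposition is more transparent.
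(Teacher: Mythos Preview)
Your $(\Rightarrow)$ direction matches the paper's argument essentially verbatim.

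For $(\Leftarrow)$, however, you are taking an unnecessary detour, and the ``main obstacle'' you identify is an artefact of that detour. The paper's route is direct: once Lemma~\ref{lem:commonPS} provides a level-$(t-1)$ common approximation $([\La,n,t-1,\g'],\vartheta')$, one applies Corollary~\ref{cor:derived} \emph{globally in $\A$}, with $[\La,n,t,\g]$ in the role of the simple stratum there, $\g'$ in the role of $\beta$, and the restrictions of $\vartheta$, $\vartheta'$ to level $t-1$ as the two characters. This yields $c\in\PP_{-t}(\La)$ with $\vartheta'=\vartheta\psi_c$ on $\H^t(\g,\La)$, and shows that the global derived stratum $[\Ga_\g,t,t-1,s_\g(c)]$ is equivalent to a simple or null stratum. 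Since both $\vartheta'$ and $\vartheta\psi_{c_j}$ restrict to $\t_j$ on $\H^t(\b_j,\La^j)$, one has $\e^j c\,\e^j\equiv c_j\pmod{\PP_{1-t}(\La^j)}$, so the $j$th block of this global derived stratum is equivalent to $[\Ga^j_\g,t,t-1,s_\g(c_j)]$; Corollary~\ref{cor:minpol} then finishes.

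Your local-to-global assembly problem never arises, because the global simple stratum $[\La,n,t-1,\g']$ is already handed to you by Lemma~\ref{lem:commonPS}. There is no need to invoke the translation principle block by block and then reassemble: that work is already packaged inside Corollary~\ref{cor:derived}, which should be applied once, in $\A$.
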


\begin{proof} Suppose first that the minimum polynomials of the derived strata
$[\Ga^j_\g,t,t-1,s_\g(c_j)]$ are all the same. Note that they are
irreducible since these strata are equivalent to simple strata. 
Then the derived stratum $[\Ga_\g,t,t-1,s_\g(c)]$ is equivalent to
a simple stratum by Corollary~\ref{cor:minpol} and,
by Proposition~\ref{prop:refinement}, there is a simple stratum
$[\La,n,t-1,\g']$ equivalent to $[\La,n,t-1,\g+c]$, 
so that $\vartheta\psi_c\in\Cc(\La,t-1,\g')$. Then, for any
$\vartheta'\in\Cc(\La,0,\g')$ extending $\vartheta\psi_c$, the pair
$([\La,n,t-1,\g'],\vartheta')$ is a common
approximation of level $t-1$. Hence, by Lemma~\ref{lem:commonPS}, the
ps-characters $\T_j^{(t-1)}$ are endo-equivalent.

Conversely, suppose the ps-characters $\T_j^{(t-1)}$ are
endo-equivalent so, by Lemma~\ref{lem:commonPS}, there is a common
approximation $([\La,n,t-1,\g'],\vartheta')$ of level $t-1$. We then
have $\vartheta'=\vartheta\psi_c$ and, by Corollary~\ref{cor:derived},
the derived stratum
$[\Ga_\g,t,t-1,s_\g(c)]$ is equivalent to a simple (or null)
stratum. Hence, by Corollary~\ref{cor:minpol}, the derived strata
$[\Ga^j_\g,t,t-1,s_\g(c_j)]$ have the same minimum polynomial. 
\end{proof}


\section{Simple types}\label{S.simple}

In this section we recall some results from ~\cite{VS3} concerning
simple types. In later sections we will need these in slightly more generality than in \emph{op.\ cit.} -- in
particular, in the case where we have a non-strict lattice
sequence. Already in the case of $\GL(n,\F)$, simple types on non-strict lattice sequence are required in \cite{BKsemi}, although this is not immediately apparent. The proofs are mostly identical to those in~\cite{VS2,VS3}.

\subsection{}

Let $[\La,n,0,\b]$ be a simple stratum in $\A=\End_\D(\V)$, and use all
the usual notation from the previous sections. Since $\b$ is fixed, we
will omit it from the notations; when $\La$ is fixed, we will omit that
also. 

\begin{lemm}\label{eta}
 Let $\t\in\Cc(\La,0,\b)$ be a simple character. Then
there is a unique irreducible representation $\eta$ of $\J^1$
which contains $\t$; moreover, $\eta|\H^1$ is a multiple of
$\t$, the dimension $\dim(\eta)=(\J^1:\H^1)^{1/2}$
and 
$$
\dim \I_g(\eta)=\begin{cases} 1 
&\hbox{ if }g\in\J^1\B^\times\J^1, \\
0 &\hbox{ otherwise.} \end{cases}
$$
\end{lemm}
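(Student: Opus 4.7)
The plan is to establish this as a Heisenberg-type result. The key structural fact we need first is that $\H^1$ is normal in $\J^1$ with $\J^1/\H^1$ a finite abelian $p$-group on which $\t$ induces a non-degenerate alternating pairing; these facts should follow from the commutator calculations already developed for simple characters (see e.g.\ \cite[Corollaire~3.12,~Proposition~3.28]{SS4} and the analogous statements in~\cite{BK,VS2}). Specifically, for $x,y\in\J^1$ the commutator $[x,y]$ lies in $\H^1$, and the map
$$
h_\t:\J^1/\H^1\times\J^1/\H^1\to\CC^\times,\qquad h_\t(\bar x,\bar y)=\t([x,y]),
$$
is a well-defined non-degenerate alternating bicharacter. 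In particular $\t$ is $\J^1$-stable and extends to the centre of $\J^1/\ker\t$.

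Granted this structure, existence, uniqueness and the dimension formula follow from an application of a standard Heisenberg/Stone--von~Neumann lemma (as in~\cite[(5.1.1)]{BK} and~\cite[Proposition~2.1]{VS3}): because $h_\t$ is non-degenerate on the $\mathbb F_p$-vector space $\J^1/\H^1$, there is a unique irreducible representation $\eta$ of $\J^1$ whose restriction to $\H^1$ is a multiple of $\t$, and then $\dim\eta=[\J^1:\H^1]^{1/2}$. The uniqueness of $\eta$ also implies that any irreducible of $\J^1$ containing $\t$ must be $\eta$, since Frobenius reciprocity forces it to occur in $\Ind_{\H^1}^{\J^1}\t$ and a Mackey/character calculation shows $\eta$ is the only such constituent.

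For the intertwining, the forward containment is immediate: if $g\in\J^1\mult\B\J^1$ then $g$ intertwines $\t$ by the intertwining formula $\I_\G(\t)=\J^1\mult\B\J^1$ recalled above, hence intertwines $\eta$ (any operator in $\I_g(\eta)$ restricts to an element of $\I_g(\t)$ on $\H^1$, and conversely an intertwiner of $\t$ on $\H^1\cap g^{-1}\H^1g$ generates one of $\eta$ by the Heisenberg property). Conversely, if $g\notin\J^1\mult\B\J^1$ then $g$ does not intertwine $\t$, so $\Hom_{\H^1\cap g^{-1}\H^1g}(\t,{}^g\t)=0$ and hence $\I_g(\eta)=0$ by Frobenius reciprocity and Mackey.

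The main technical point, and the part where one must be careful in the non-sound setting, is showing $\dim\I_g(\eta)=1$ for every $g\in\J^1\mult\B\J^1$. By the bi-invariance of the intertwining it suffices to treat $g\in\mult\B$. One then uses the decomposition $\J^1=\H^1(\J^1\cap\B^\times\cdot\text{unipotent radical stuff})$, more precisely the Iwahori-type decomposition of $\J^1$ coming from the $\E$-pure structure of $\La$, to reduce the computation of $\I_g(\eta)$ to the intertwining of $\t$ on $\H^1\cap g^{-1}\H^1g$, which is one-dimensional because $\t$ extends $\psi_\b$ on a subgroup containing $\U^{\lfloor n/2\rfloor+1}(\La)$ and $g\in\mult\B$ centralizes $\b$. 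The hard part will be to carry this through uniformly for non-strict $\La$; the strategy will be to pass to $\La^\ddag$ (following paragraph~\ref{SS:passtoddag}) where the sound case gives the result via~\cite[Th\'eor\`eme~3.50]{SS4} or~\cite[Lemme~3.11]{VS3}, and then intersect with $\A$ to recover the statement for $\La$, noting that the intertwining computation involves no $\G$-conjugation beyond the element $g$ itself, which can be taken in the diagonal block $\mult\B\subseteq\mult{\B^\ddag}$.
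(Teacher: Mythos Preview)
Your Heisenberg argument for existence, uniqueness, the dimension formula, and the vanishing of $\I_g(\eta)$ for $g\notin\J^1\mult\B\J^1$ is the same route as the paper's: one simply transports~\cite[Proposition~5.1.1]{BK} using the non-degeneracy of the pairing, for which the paper cites~\cite[Proposition~2.31]{SS4} (your references to~\cite{SS4,VS2,VS3} are in the same spirit).

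Where you diverge is in the computation of $\dim\I_g(\eta)=1$ for $g\in\mult\B$. The paper does \emph{not} reduce to the sound or strict case via $\La^\ddag$; instead it runs the proof of~\cite[Proposition~5.1.8]{BK} verbatim, replacing the exact sequences of~\cite[\S3.1]{BK} (which relate $\J^1/\H^1$ at successive levels of the defining sequence of $\b$) by their analogues in~\cite[Proposition~2.27]{SS4}. Those exact sequences are already stated and proved for arbitrary $\Oo_\D$-lattice sequences, so the argument goes through uniformly and no special care is needed for non-strict $\La$.

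Your proposed fallback through $\La^\ddag$ is exactly the situation the paper warns against in paragraph~\ref{SS:passtoddag}: the statement involves conjugation by $g$, so ``intersecting with $\A$'' has no obvious meaning for the intertwining space $\I_g(\eta)$, which is a space of linear maps between representation spaces, not a subset of the algebra. Knowing $\dim\I_{\diag(g)}(\eta^\ddag)=1$ does not immediately yield $\dim\I_g(\eta)=1$, because $\eta^\ddag$ restricted to the $(1,1)$ block is not $\eta$ and there is no evident functorial map between the two intertwining spaces. One could perhaps make such a reduction work with extra effort (e.g.\ via the Jacquet-module description of $\eta$ inside $\eta^\ddag$), but it is substantially more delicate than the direct transport of~\cite[5.1.8]{BK} via~\cite[Proposition~2.27]{SS4}, and your sketch does not address this difficulty.
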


\begin{proof} The proof of all but the final assertion is identical to
  that of~\cite[Proposition~5.1.1)]{BK},
  replacing~\cite[Theorem~3.4.1)]{BK}
  by~\cite[Proposition~2.31]{SS4}. The proof of the final assertion is
  identical to that of~\cite[Proposition~5.1.8)]{BK}, replacing the
  exact sequences there by those of~\cite[Proposition~2.27]{SS4}.
\end{proof}

\begin{lemm} 
For $i=1,2$, let $[\La^i,n_i,0,\b]$ be a simple stratum in $\A$, let
$\t_i\in\Cc(\La^i,0,\b)$, and let $\eta_i$ be the unique
irreducible representation of $\J^1(\b,\La^i)$ which contains
$\t_i$. Then
$$
\frac{\dim(\eta_1)}{\dim(\eta_2)} =
\frac{(\J^1(\b,\La^1):\J^1(\b,\La^2))}{(\U^1(\La^1)\cap\B:\U^1(\La^2)\cap\B)}.
$$
\end{lemm}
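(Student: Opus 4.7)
By Lemma~\ref{eta}, $\dim(\eta_i)=(\J^1(\b,\La^i):\H^1(\b,\La^i))^{1/2}$, so after squaring, the target identity becomes
$$
\frac{(\J^1(\b,\La^1):\H^1(\b,\La^1))}{(\J^1(\b,\La^2):\H^1(\b,\La^2))} = \left(\frac{(\J^1(\b,\La^1):\J^1(\b,\La^2))}{(\U^1(\La^1)\cap\B:\U^1(\La^2)\cap\B)}\right)^{\!2},
$$
where on the right-hand side $(A:B)=(A:A\cap B)/(B:A\cap B)$ is the generalized index. The plan is to establish this multiplicative identity between indices of compact open subgroups.

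First, I would use the $\ddag$-construction of paragraph~\ref{SS:passtoddag} to reduce to the case of sound simple strata on strict lattice sequences. Each of the groups $\J^1(\b,\La^i)$, $\H^1(\b,\La^i)$ and $\U^1(\La^i)\cap\B$ is recovered from its counterpart on $\La^{i,\ddag}$ by intersection with $\A$, and the corresponding (generalized) indices are preserved.

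Next I would bring $\La^1$ and $\La^2$ to a common refinement: an $\Oo_\D$-lattice sequence $\La$ whose attached groups $\J^1(\b,\La)$, $\H^1(\b,\La)$ and $\U^1(\La)\cap\B$ are each contained in their $\La^i$-counterparts ($i=1,2$). This turns every generalized index in the displayed identity into an honest subgroup index, and by multiplicativity of indices the identity for $(\La^1,\La^2)$ reduces to the two identities for $(\La^1,\La)$ and $(\La^2,\La)$.

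In this refined setting, each index is to be computed explicitly from the $\Oo_\F$-lattice descriptions $\J^1(\b,\La')=1+\JJ(\b,\La')$ and $\H^1(\b,\La')=1+\HH(\b,\La')$. Given a tame corestriction $s:\A\to\B$ relative to $\F[\b]/\F$, the exact sequences of~\cite[Proposition~2.27]{SS4} split each of these $\Oo_\F$-lattices into its intersection with $\B$ and its image under $s$. The $\B$-components produce exactly the factors $(\U^1(\La^i)\cap\B:\U^1(\La)\cap\B)$ appearing on the right, while the images in $\A/\B$ carry the symplectic Heisenberg structure of $\J^1/\H^1$: this quotient is a symplectic $\mathbb{F}_p$-space under a form derived from $\t_i$, so its order is a perfect square equal to $(\J^1:\H^1)$, and this is precisely what produces the square on the right-hand side.

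The main obstacle will be the bookkeeping in this last step: one must choose the tame corestriction (and the common refinement $\La$) consistently, and verify that the splitting induced by $s$ respects the filtrations of $\J^1$ and $\H^1$ level-by-level. This compatibility is the content of the structural results on simple characters recalled in~\S\ref{S.characters}, and is what ultimately causes the ``off-$\B$'' contributions to match between numerator and denominator, leaving only the $\B$-contributions to give the desired ratio.
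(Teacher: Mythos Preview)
Your core idea---computing the indices via the exact sequences of~\cite[Proposition~2.27]{SS4}, which split each filtration quotient into a $\B$-part and a complementary part using a tame corestriction---is exactly the paper's approach. The paper's proof consists of a single sentence: it is identical to that of~\cite[Proposition~5.1.2]{BK} with the exact sequences there replaced by those of~\cite[Proposition~2.27]{SS4}. Your final two paragraphs are essentially a sketch of that argument.

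The $\ddag$-reduction step, however, is both unnecessary and incorrectly justified. It is unnecessary because the exact sequences of~\cite[Proposition~2.27]{SS4} are valid for arbitrary $\Oo_\D$-lattice sequences, not only strict or sound ones; there is nothing to be gained by passing to the sound case. It is incorrectly justified because the claim that ``the corresponding (generalized) indices are preserved'' is false as stated: passing from $\A$ to $\A^{\ddag}=\M_l(\A)$ replaces each $\Oo_\F$-lattice by a block lattice in a much larger space, and the indices scale by powers of $l$ rather than remaining fixed. What you would actually need is that both sides of the squared identity scale by the \emph{same} factor under $\ddag$, and checking this homogeneity is at least as much work as the direct computation. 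There is also a coherence issue you do not address: the integer $l$ in the $\ddag$-construction depends on the periods of $\La^i$ and of the lattice sequence in $\B$, and you would need a single $l$ working simultaneously for $\La^1$ and $\La^2$ in order to compare their $\ddag$-versions inside the same ambient algebra.

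If you simply delete the $\ddag$ paragraph, what remains (reduction to a containment situation via a common refinement, then the exact-sequence computation) is the Bushnell--Kutzko argument transported verbatim, which is all the paper claims.
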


\begin{proof} Again, the proof is identical to that
  of~\cite[Proposition~5.1.2]{BK}, replacing the exact sequence there
  with~\cite[Proposition~2.27]{SS4}. 
\end{proof}

\subsection{}
Recall that a \emph{$\b$-extension of $\t$} is a representation
$\k$ of $\J$ which extends the representation $\eta$ given by
Lemma~\ref{eta} and such that $I_G(\k)\supset \B^\times$. In the case
that $\La$ is strict, the existence of $\b$-extensions is given
by~\cite[Th\'eor\`eme~2.28]{VS2}. Using this, we proceed here via a
simplified version of the compatibility argument used there.

\begin{defi}\label{def:compat}
Let $[\La,n,0,\b]$ be a simple stratum in $\A$ and let
$\La'$ be an $E$-pure lattice sequence in $\V$ such that
$\PP_0(\La)=\PP_0(\La')$. Let $\t,\t'$ be simple characters
which are realizations of the same ps-character on $\La,\La'$
respectively, and let $\k,\k'$ be extensions of the representations
$\eta,\eta'$ given by Lemma~\ref{eta} respectively. We say that
$\k,\k'$ are \emph{compatible} (or \emph{mutually coherent}) if
$$
\Ind_{\J}^{(\U(\La)\cap\B)\U^1(\La)} \k \simeq 
\Ind_{\J(\b,\La')}^{(\U(\La)\cap\B)\U^1(\La)} \k'.
$$
\end{defi}

\begin{prop} With the notations of Definition~\ref{def:compat}, the
  notion of compatibility induces a bijection
$$
\left\{\hbox{$\b$-extensions of $\t$}\right\} \longleftrightarrow
\left\{\hbox{$\b$-extensions of $\t'$}\right\}.
$$
In particular, there is a $\b$-extension $\k$ of $\t$, and then
the set of $\b$-extension of $\t$ is given by
$$
\left\{\k\otimes\left(\chi\circ\N_{\B/\E}\right) : \chi\in
\widehat{\U_\E/\U^1_\E}\right\}.
$$
\end{prop}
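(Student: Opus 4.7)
The natural strategy is to reduce to the strict case, where the existence of $\b$-extensions is available by~\cite[Th\'eor\`eme~2.28]{VS2}, and to use the notion of compatibility as the transport mechanism. Concretely, let $\Ll$ denote the unique strict $\Oo_\D$-lattice sequence on $\V$ with the same image as $\La$, so that $\PP_0(\Ll)=\PP_0(\La)$, $\U(\Ll)\cap\B=\U(\La)\cap\B$ and $\U^1(\Ll)=\U^1(\La)$; since $\La$ is $\E$-pure, so is $\Ll$. The strict case of~\cite[Th\'eor\`eme~2.28]{VS2} produces a $\b$-extension $\k_\Ll$ of the transfer $\t_\Ll\in\Cc(\Ll,0,\b)$ of $\t$, and the statement of the proposition will then be obtained by bootstrapping from $(\Ll,\t_\Ll)$ to the pair $(\La',\t')$ in two steps, via the intermediate pair $(\La,\t)$.

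The main work is to show that compatibility sets up a bijection between $\b$-extensions of $\t$ and $\b$-extensions of $\t'$. First I would verify that both subgroups $\J=\J(\b,\La)$ and $\J'=\J(\b,\La')$ sit inside the common overgroup $\HH=(\U(\La)\cap\B)\U^1(\La)=(\U(\La')\cap\B)\U^1(\La')$, using the factorizations $\J=(\U(\La)\cap\B)\J^1$, $\J'=(\U(\La')\cap\B)\J^1(\b,\La')$ and the inclusions $\J^1\subseteq\U^1(\La)$, $\J^1(\b,\La')\subseteq\U^1(\La')$. Then, given a $\b$-extension $\k$ of $\t$, I would analyse the induced representation $\pi=\Ind_\J^\HH\k$: its restriction to the common pro-$p$ normal subgroup $\U^1(\La)\cap\Ker\t$ is controlled by $\eta$ and $\eta'$ (which are the unique irreducible representations of $\J^1,\J^1(\b,\La')$ containing $\t,\t'$ given by Lemma~\ref{eta}), and an intertwining computation in the spirit of~\cite[Th\'eor\`eme~2.28]{VS2}, using the intertwining formula $\I_G(\eta)=\J^1\B^\times\J^1$ (Lemma~\ref{eta}) and its analogue for $\eta'$, shows that $\pi$ is irreducible. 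By Frobenius reciprocity applied to $\J'$, $\pi$ then contains a unique irreducible representation $\k'$ of $\J'$ such that $\Ind_{\J'}^\HH\k'\simeq\pi$; checking that $\k'|\H^1(\b,\La')$ is a multiple of $\t'$ and that $\B^\times\subseteq\I_G(\k')$ (both properties descend from $\k$ via the induction) shows that $\k'$ is a $\b$-extension of $\t'$. The construction is manifestly symmetric in $\La,\La'$, so it produces a bijection.

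Once the bijection is established, existence in general follows by transporting $\k_\Ll$ from $\Ll$ to $\La$ and then to $\La'$. For the parametrization of $\b$-extensions of $\t$, let $\k,\k_1$ be two such. Since both restrict to $\eta$ on $\J^1$, Schur's lemma gives $\k_1\simeq\k\otimes\xi$ for a character $\xi$ of $\J/\J^1\simeq(\U(\La)\cap\B)/(\U^1(\La)\cap\B)\simeq\U(\Ga)/\U^1(\Ga)$. The condition $\B^\times\subseteq\I_G(\k_1)$ means that $\xi$ is stable under conjugation by $\B^\times$; using that the conjugation action of $\B^\times$ on $\U(\Ga)/\U^1(\Ga)\simeq\GL_{m_\E}(k_{\D_\E})$ is by inner automorphisms of the general linear group, such a character must factor through the determinant, i.e.\ through the reduced norm $\Nrd_{\B/\E}$, giving exactly a character of $\U_\E/\U^1_\E$ composed with $\N_{\B/\E}$.

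The main obstacle is the irreducibility of $\Ind_\J^\HH\k$ and the proof that the intertwining property $\B^\times\subseteq\I_G(\cdot)$ is preserved under the correspondence, since these rely on a careful intertwining computation involving $\J^1$ and $\J^1(\b,\La')$ which are in general distinct pro-$p$ groups inside $\U^1(\La)$; this is the non-strict analogue of the argument in~\cite[\S2.4]{VS2} and is exactly where one uses that $\PP_0(\La)=\PP_0(\La')$ so that the ambient group $\HH$ is the same on both sides.
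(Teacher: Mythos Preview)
Your proposal is essentially correct and follows the same route as the paper: the paper's proof simply cites~\cite[Lemmes~2.23,~2.24]{VS2} (and~\cite[Proposition~5.25]{BK}) for the compatibility bijection and then specializes $\La'$ to the strict sequence $\Ll$ with the same image as $\La$ to import existence and the parametrization from~\cite[Th\'eor\`eme~2.28]{VS2}. Your sketch of the irreducibility of $\Ind_\J^\HH\k$ via $\I_\HH(\k)=\J$ and the transport by Frobenius reciprocity is exactly the content of those cited lemmas.

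One small inaccuracy: in the parametrization step you write $\U(\Ga)/\U^1(\Ga)\simeq\GL_{m_\E}(k_{\D_\E})$, but for a general $\E$-pure $\La$ the order $\PB_0(\Ga)$ need not be maximal, so this quotient is in general only a \emph{product} of groups $\GL_{s_i}(k_{\D_\E})$. This does not affect your argument, since a character of such a product which is intertwined by all of $\B^\times$ still factors through the reduced norm, but you should phrase it accordingly.
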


\begin{proof}
The first assertion follows as in~\cite[Lemmes~2.23,2.24]{VS2}
(cf. also~\cite[Proposition~5.25]{BK}). Now taking $\La'$ to be the
strict lattice sequence in $\V$ with the same image as $\La$, the final
assertion follows from~\cite[Th\'eor\`eme~2.28]{VS2}.
\end{proof}

\subsection{}\label{SS.etaP}
We continue with a simple stratum $[\La,n,0,\b]$ and a simple character
$\t\in\Cc(\La,0,\b)$, together with the unique irreducible
representation $\eta$ of $\J^1$ containing $\t$. Let $\V_\E$  be a
simple left $\B$-module, let $\D_\E$ be the opposite algebra to
$\End_\B(\V_\E)$, and set $m_\E=\dim_{\D_\E}\V_\E$. We write $\Ga$ for the unique (up to translation) $\Oo_{\D_\E}$-lattice sequence on $\V_\E$ such that $\PP_k(\La)\cap\B=\PP_k(\Ga)$, for all $k\in\ZZ$.
 
We suppose given a decomposition $\V=\V^1\oplus\cdots\oplus\V^r$
which is \emph{subordinate to $\PP_0(\Ga)$} in the sense
of~\cite[D\'efinition~5.1]{SS4}: that is, it is a decomposition of
$\E\otimes\D$-bimodules and, writing $\e^j$ for the
idempotents of $\PP_0(\Ga)$ defined by the decomposition
and $m_j=\dim_{\D_\E}\e^j\V_\E$, there is an isomorphism of
$E$-algebras $\Psi:\B\to\M_{m_\E}(\D_\E)$ such that:
\begin{enumerate}
\item[(i)] for $1\le j\le r$, the idempotent $\Psi(\e^j)$ is
  $\I^j=\diag(0,\ldots,\Id_{m_j},\ldots,0)$:
\item[(ii)] The hereditary order $\Psi(\PB_0(\Ga))$ is the
  $\Oo_\E$-subalgebra of $\M_{m_\E}(\Oo_{\D_\E})$ consisting of
  matrices whose reduction modulo $\p_{\D_\E}$ is upper
  triangular by blocks of size $(m_1,\ldots,m_r)$.
\end{enumerate} 
Note then that $\La^j=\La\cap\V^j$ is in the affine class of a strict lattice
sequence of $\Oo_{\D_\E}$-period $1$ in $\V^j$. 

Let $\P$ be the parabolic subgroup of $\G$ stabilizing the flag
$$
\{0\}\subset \V^1\subset \V^1\oplus \V^2 \subset \cdots \subset \V,
$$
and write $\P=\LL\N$, where $\LL$ is the stabilizer of the decomposition
$\V=\bigoplus_{j=1}^r \V^j$ and $\N$ is the unipotent radical. Write
$\P_-=\LL\N_-$ for the opposite parabolic relative to $\LL$.

We define the groups
$$
\J_\P=\H^1\left(\J\cap\P\right),\quad
\J^1_\P=\H^1\left(\J^1\cap\P\right),\quad
\H^1_\P=\H^1\left(\J^1\cap\N\right),
$$
and define the character $\t_\P$ of $\H^1_\P$ by
$\t_\P(hu)=\t(h)$, for $h\in\H^1$ and
$u\in\J^1\cap\N$. We also put $\J_\LL=\J\cap\LL$ and
$\J^1_\LL=\J^1\cap\LL$ and notice that, since the decomposition is
subordinate to $\PP_0(\La)\cap\B$, we have
$$
\J_\P/\J^1_\P \simeq \J_\LL/\J^1_\LL \simeq
\U(\Ga)/\U^1(\Ga) \simeq \J/\J^1.
$$
In particular, given a representation of $\U(\Ga)$ trivial on
$\U^1(\Ga)$, we can also regard it as a
representation of $\J_\P$ (respectively $\J_\LL$, $\J$) trivial on $\J^1_\P$
respectively $\J^1_\LL$, $\J^1$).

The following Proposition summarizes the results
of~\cite[Propositions~5.3--5]{SS4} (see also \emph{op. cit.} \S5.8);
the results there are in the case 
that $\La$ is strict but, given our preliminary results above, identical
proofs apply in the general case. 

\begin{prop}
Let $\eta_\P$ denote the natural representation of $\J^1_\P$ on on the
$\J\cap\N$-invariants of $\eta$. Then $\eta_\P$ is the unique
irreducible representation of $\J^1_\P$ which contains
$\t_\P$. Moreover, $\Ind_{\J_\P}^{\J^1}\eta_\P$ is isomorphic
to $\eta$ and 
$$
\dim \I_g(\eta_\P)=\begin{cases} 1 
&\hbox{ if }g\in\J_\P^1\B^\times\J_\P^1, \\
0 &\hbox{ otherwise.} \end{cases}
$$
\end{prop}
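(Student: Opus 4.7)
The plan is to adapt the proofs of \cite[Propositions~5.3--5]{SS4} to our (possibly non-strict) setting; the essential new ingredient, namely Lemma~\ref{eta} on the Heisenberg representation $\eta$ for arbitrary $\La$, has already been established. First I would set up the three Iwahori decompositions afforded by subordinacy of $\V=\bigoplus_j \V^j$ to $\PP_0(\Ga)$:
$$
\H^1 = (\H^1\cap\N_-)(\H^1\cap\LL)(\H^1\cap\N),
$$
and identically for $\J^1$ and $\J$ in place of $\H^1$. Note that $\J\cap\N=\J^1\cap\N$, since $\J/\J^1\simeq\U(\Ga)/\U^1(\Ga)$ is Levi in nature and meets $\N$ trivially. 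Consequently
$$
\H^1_\P = (\H^1\cap\N_-)(\H^1\cap\LL)(\J^1\cap\N),\qquad \J^1_\P = (\H^1\cap\N_-)(\J^1\cap\LL)(\J^1\cap\N).
$$
To see that $\t_\P$ is a well-defined character one checks $\t|\H^1\cap\N=1$, which holds because $\b\in\B\cap\LL$ forces $\psi_\b$ to vanish on $\U^1(\La)\cap\N$, combined with the compatibility of $\Cc(\La,0,\b)$ with the block decomposition via \cite[Proposition~2.27]{SS4}.

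Next, the commutator pairing $(x,y)\mapsto \t([x,y])$ endows $\J^1/\H^1$ with the structure of a non-degenerate symplectic $k_\F$-space (this non-degeneracy is equivalent to the existence and irreducibility of $\eta$). The Iwahori decomposition above exhibits $(\J^1\cap\N)\H^1/\H^1$ and $(\J^1\cap\N_-)\H^1/\H^1$ as complementary Lagrangian subspaces, and $\H^1_\P/\H^1$ is the orthogonal of the former Lagrangian. Standard Heisenberg theory therefore implies that the $(\J^1\cap\N)$-invariant subspace in $\eta$ is an irreducible representation $\eta_\P$ of its stabilizer $\J^1_\P$, has dimension $(\J^1_\P:\H^1_\P)^{1/2}$, restricts to a multiple of $\t_\P$ on $\H^1_\P$, and is the unique irreducible representation of $\J^1_\P$ with these properties. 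A dimension count combined with Frobenius reciprocity (non-vanishing of $(\J^1\cap\N)$-invariants in $\eta$) then yields $\Ind_{\J^1_\P}^{\J^1}\eta_\P\simeq\eta$.

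For the intertwining formula I would exploit Lemma~\ref{eta} via Mackey. Fix $g\in\G$. From $\eta\simeq\Ind_{\J^1_\P}^{\J^1}\eta_\P$ one obtains
$$
\dim\I_g(\eta)=\sum_{x,y\in\J^1/\J^1_\P}\dim\I_{x^{-1}gy}(\eta_\P),
$$
so in particular $\dim\I_g(\eta_\P)\le 1$ always, and $\dim\I_g(\eta_\P)\ne 0$ forces $g\in\J^1\B^\times\J^1$. Conjugating reduces one to $g\in\B^\times$, where one must show that exactly one pair $(x,y)$ contributes. This comes down to the fact that $\B^\times$ normalizes $\P\cap\B^\times$, which, combined with the Iwahori decomposition, forces the $\N$- and $\N_-$-components of $x$ and $y$ to lie in trivial intersections modulo the stabilizer of $\eta_\P$; cf.\ the argument of \cite[\S5.8]{SS4}.

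The main obstacle will be this last step: sharpening the support from $\J^1\B^\times\J^1$ down to $\J^1_\P\B^\times\J^1_\P$. For this I would follow \cite[Proposition~5.5]{SS4} but keep careful track of the Iwahori decomposition throughout, using only the isomorphism $\J_\P/\J^1_\P\simeq \U(\Ga)/\U^1(\Ga)$ afforded by subordinacy rather than any property specific to strict $\La$.
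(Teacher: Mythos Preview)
Your approach is the same as the paper's---which simply remarks that the proofs of \cite[Propositions~5.3--5]{SS4} (see also \emph{op.\ cit.}~\S5.8) go through verbatim once Lemma~\ref{eta} is available for general $\La$---and your outline of those arguments is essentially correct.

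One correction is needed, however: the subspaces $(\J^1\cap\N)\H^1/\H^1$ and $(\J^1\cap\N_-)\H^1/\H^1$ of the symplectic space $\J^1/\H^1$ are in general \emph{not} Lagrangian, because the Levi piece $(\J^1\cap\LL)\H^1/\H^1\simeq\prod_j \J^1(\b,\La^j)/\H^1(\b,\La^j)$ is typically nontrivial. The correct statement is that $\H^1_\P/\H^1=(\J^1\cap\N)\H^1/\H^1$ is totally isotropic with orthogonal complement $\J^1_\P/\H^1$; the symplectic reduction $\J^1_\P/\H^1_\P$ is then non-degenerate, and it is to this quotient that one applies Heisenberg theory to obtain the existence, uniqueness and dimension of $\eta_\P$. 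With this adjustment your argument, including the Mackey computation for the intertwining formula, goes through and matches \cite{SS4}.
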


\subsection{}
Now let $\k$ be a $\b$-extension of $\t$ and let $\k_\P$ denote
the natural representation of $\J_\P$ on on the $\J\cap\N$-invariants
of $\k$. The proof of~\cite[Proposition~5.8]{SS4} (see also
\emph{op. cit.} \S5.8) again generalizes to
the non-strict case and gives:

\begin{prop}\label{prop:kP}
$\k_\P$ is an irreducible representation of $\J_\P$ with
the following properties:
\begin{enumerate}
\item[(i)] $\k_\P|\H^1(\b,\La)$ is a multiple of $\t$;
\item[(ii)] $\k_\P$ is trivial on $\J_\P\cap\N$ and $\J_\P\cap\N_-$;
\item[(iii)] $\k_\P|\J_\LL \simeq \k_1\otimes\cdots\otimes\k_r$, for
some $\b$-extensions $\k_j$ containing $\t_j$;
\item[(iv)] $\I_\G(\k_\P)=\J_\P\B^\times\J_\P$;
\item[(v)] if $m_j=m_k$ then $\k_j\simeq\k_k$. 
\item[(vi)] if $\xi$ is an irreducible representation of
$\U(\Ga)$ trivial on $\U^1(\Ga)$, then 
$$
\Ind_{\J_\P}^\J (\k_\P\otimes\xi) \simeq \k\otimes\xi,
\qquad\hbox{and}\qquad
\I_\G(\k_\P\otimes\xi)=\J_\P\I_{\B^\times}(\xi)\J_\P.
$$
\end{enumerate}
\end{prop}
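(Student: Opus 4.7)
The plan is to adapt the proof of~\cite[Proposition~5.8]{SS4} to our slightly more general setting in which $\La$ need not be strict. All preliminary facts needed there -- in particular, the analogous properties of $\eta_\P$ stated in the preceding proposition, the existence and uniqueness (up to unramified twist) of $\b$-extensions, and the intertwining formula for $\t$ -- have by now been established without the strictness assumption, so one may proceed \emph{mutatis mutandis}.

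First I would establish irreducibility together with~(i) and~(ii). Since $\k|\J^1\simeq\eta$ contains $\t$ with multiplicity one, the $\J\cap\N$-invariants of $\k$ restricted to $\J^1_\P$ form precisely the space on which $\eta_\P$ is realized; hence $\k_\P|\J^1_\P\simeq\eta_\P$, which is irreducible, so $\k_\P$ itself is irreducible and~(i) follows. Triviality of $\k_\P$ on $\J_\P\cap\N$ is immediate from taking $\N$-invariants, while triviality on $\J_\P\cap\N_-$ is deduced from the intertwining of $\k$ by a suitable uniformizer of $\E$, which conjugates $\J_\P\cap\N_-$ into a pro-$p$ subgroup on which $\k$ is already known to be trivial; this is the standard argument of~\cite[\S5]{BK} and does not require strictness.

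Property~(iii) then follows by writing $\J_\P=(\J_\P\cap\N_-)\J_\LL(\J_\P\cap\N)$ (an Iwahori-type decomposition coming from the subordination of $\V=\bigoplus\V^j$ to $\PP_0(\Ga)$) and noting that, by~(ii), $\k_\P$ is determined by its restriction to $\J_\LL$; that this restriction is a tensor product of $\b$-extensions $\k_j$ of the transferred simple characters $\t_j$ follows from the block-diagonal structure given by the idempotents $\e^j$ combined with the compatibility of $\b$-extensions on $\La$ and on its block components $\La^j$. Property~(v) is then immediate: whenever $m_j=m_k$, the pairs $(\J_j,\k_j)$ and $(\J_k,\k_k)$ are transfers of one another via a Weyl-type element of $\B^\times$ which permutes the two blocks. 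Property~(iv) follows from the intertwining formula $\I_\G(\k)=\J\B^\times\J$, combined with the support formula for $\eta_\P$ and the Iwahori decomposition of $\J_\P$. Finally, for~(vi), induction in stages shows that $\Ind_{\J_\P}^\J(\k_\P\otimes\xi)$ restricts on $\J^1$ to $\Ind_{\J^1_\P}^{\J^1}\eta_\P\simeq\eta$, so it extends $\eta$; since $\xi$ is inflated from $\U(\Ga)/\U^1(\Ga)\simeq\J/\J^1$ and $\k_\P\otimes\xi$ agrees with $\k\otimes\xi$ on the block-diagonal part, Frobenius reciprocity gives the claimed isomorphism, and the intertwining formula follows because any intertwiner of $\k_\P\otimes\xi$ must simultaneously intertwine the $\eta_\P$-isotypic component (forcing it into $\J_\P\B^\times\J_\P$) and the $\xi$-component on the Levi quotient.

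The main obstacle is the tensor-product decomposition in~(iii), where one must verify that the block-by-block decomposition of $\k_\P|\J_\LL$ into $\b$-extensions is coherent with the original construction of $\k$. In the strict case this is handled by the compatibility machinery of~\cite[\S2]{VS2}; in the present setting one needs the non-strict analogue of compatibility (Definition~\ref{def:compat} and the following proposition) applied to each block $\La^j$. Because the subordination hypothesis forces each $\La^j$ to lie in the affine class of a strict $\Oo_{\D_\E}$-lattice sequence of period~$1$, the block-wise theory of $\b$-extensions is available and the passage from the strict case to the general case goes through without further difficulty.
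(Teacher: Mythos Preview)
Your proposal is correct and follows essentially the same approach as the paper. The paper's own proof is terser: it simply notes that properties (i)--(iv) and (vi) are already given by~\cite[Proposition~5.8 or~\S5.8]{SS4} (the preliminary results having been extended to the non-strict case), and then singles out (v) as the only assertion requiring a separate argument, which it gives via the permutation-matrix/Weyl-element trick you also invoke --- namely, the element $w\in\B^\times$ swapping $\V^j$ and $\V^k$ intertwines $\k_\P$ by (iv) and normalizes $\J_\P\cap\LL$, hence induces $\k_j\simeq\k_k$ (imitating~\cite[Corollary~7.2.6]{BK}). Your sketch of (v) as ``immediate'' via a Weyl-type element is exactly this, though the paper makes explicit that the crucial input is the intertwining formula (iv) rather than any abstract transfer property.
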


\begin{proof} The only property not given
  by~\cite[Proposition~5.8~or~\S5.8]{SS4} is (v), where
imitate the proof of~\cite[Corollary~7.2.6]{BK}. There is a
permutation matrix in $w\in\M_{m_\E}(\D_\E)$ (which we have identified with
$\B$ via $\Psi$ above) which swaps $\V^j$ with $\V^k$, and leaves all
other $\V^i$ fixed. In particular, it lies in $\B$ 
and normalizes $\J_\P\cap\LL$. 
Then $w$ intertwines $\k_\P$ so normalizes $\k_\P|\J_\LL$ and
hence induces an isomorphism between $\k_j$ and $\k_k$. 
\end{proof}

\subsection{}
Finally, we consider the case where all $m_j$ are equal to some
integer $s$, so that $\U(\Ga)/\U^1(\Ga)\simeq
\GL_s(k_{\D_\E})^r$ and let $\xi$ be the inflation to $\U(\Ga)$
of the representation $\sigma^{\otimes r}$, for $\sigma$ an
irreducible cuspidal representation of $\GL_s(k_{\D_\E})$. 

We put $\l=\k\otimes\xi$, $\l_\P=\k_\P\otimes\xi$ and
$\l_\LL=\l_P|\J_\LL$. 

\begin{prop}\label{simplecase}
The pair $(\J_\P,\l_\P)$ is a cover of
$(\J_\LL,\l_\LL)$
and
$$
\Hh(\G,\l_\P)\cong \Hh(r,q_{\D_\E}^{s}).
$$
\end{prop}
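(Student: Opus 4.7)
The plan is to verify that $(\J_\P,\l_\P)$ satisfies the axioms of a cover with respect to $\P=\LL\N$, and then to identify the Hecke algebra with the level-zero one attached to $\xi$, which is already known.

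First I would check that $(\J_\P,\l_\P)$ is a decomposed pair with respect to $\P$, i.e.\ that $\J_\P$ admits an Iwahori decomposition
$$
\J_\P=(\J_\P\cap\N_-)(\J_\P\cap\LL)(\J_\P\cap\N)
$$
and that $\l_\P$ is trivial on $\J_\P\cap\N_-$ and $\J_\P\cap\N$. The Iwahori decomposition is forced by the fact that the decomposition $\V=\bigoplus\V^j$ is subordinate to $\PP_0(\Ga)$: this pushes through to compatible block decompositions of $\H^1$, $\J^1$ and $\J$, as in~\cite[\S5]{SS4}. The triviality of $\k_\P$ on the two unipotent parts is Proposition~\ref{prop:kP}(ii). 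For $\xi$, the subordination condition places the part of $\U(\Ga)$ below the block diagonal inside $\U^1(\Ga)$, on which $\xi$ is trivial by construction, and the part above is already in $\LL$, so $\xi$ is trivial on $\J_\P\cap\N_-$ and the restriction to $\J_\P\cap\N$ factors through the diagonal. Combining, $\l_\P$ is trivial on both unipotent intersections and restricts to $\l_\LL$ on $\J_\LL$.

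Next I would translate the intertwining of $\l_\P$ in $\G$ into a statement inside $\B^\times$. By Proposition~\ref{prop:kP}(vi) we have the support formula
$$
\I_\G(\l_\P)=\J_\P\,\I_{\B^\times}(\xi)\,\J_\P.
$$
Now $\B^\times\cong\GL_{m_\E}(\D_\E)$ with $m_\E=rs$, and $(\U(\Ga),\xi)$ is precisely the level-zero pair attached to the Levi subgroup $\LL_\B\cong\prod_{j=1}^r\GL_s(\D_\E)$ and the cuspidal representation $\sigma^{\otimes r}$, i.e.\ the standard maximal parahoric plus inflation of a cuspidal. By the level-zero theory (as in~\cite{GSZ} or the classical case~\cite[\S5]{BKsemi}), $(\U(\Ga),\xi)$ is an $[\LL_\B,\sigma^{\otimes r}]_{\B^\times}$-type in $\B^\times$ with Hecke algebra isomorphic to $\Hh(r,q_{\D_\E}^s)$, and this algebra contains the strongly positive translation operators one needs to satisfy the cover axiom.

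The decisive step is then to transfer this Hecke algebra computation from $\B^\times$ to $\G$. Here I would follow the standard Bushnell--Kutzko scheme (the $\GL_n(\F)$-version is~\cite[\S7.2]{BK}, and the sound case in our setting is~\cite[\S4]{VS3}): using the support formula above together with the Iwahori decomposition and the triviality of $\l_\P$ on the unipotent parts, one produces a support-preserving algebra isomorphism
$$
\Hh(\B^\times,\xi)\ \longrightarrow\ \Hh(\G,\l_\P),
$$
by sending a function $\phi$ supported on a double coset $\U(\Ga)b\U(\Ga)$ in $\B^\times$ to a function on $\G$ supported on $\J_\P b\J_\P$, and extending the intertwining datum of $\xi$ to the one of $\l_\P$ via the identification of the $\xi$-isotypic and $\l_\P$-isotypic pieces coming from Proposition~\ref{prop:kP}. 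Under this isomorphism the invertible positive element witnessing that $(\U(\Ga),\xi)$ is a cover in $\B^\times$ is sent to an invertible positive element in $\Hh(\G,\l_\P)$, which is exactly what is required for $(\J_\P,\l_\P)$ to be a cover of $(\J_\LL,\l_\LL)$ by Bushnell--Kutzko's criterion.

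The main obstacle is this last step: verifying that the support-preserving bijection is genuinely an algebra isomorphism. It amounts to a careful product computation using the Iwahori decomposition of $\J_\P$ to rewrite a convolution on $\G$ as a convolution on $\B^\times$, and checking that the obstruction to doing so (an integral over $\J_\P\cap\N$ of a translate of $\l_\P$) is trivial thanks to Proposition~\ref{prop:kP}(ii),~(vi). Once this bookkeeping is done the isomorphism $\Hh(\G,\l_\P)\cong\Hh(r,q_{\D_\E}^s)$ and the cover property both follow simultaneously.
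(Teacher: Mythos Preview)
Your approach is correct but follows a different route from the paper. You propose a direct verification: check the Iwahori decomposition and triviality on unipotent parts from Proposition~\ref{prop:kP}, then build a support-preserving isomorphism $\Hh(\B^\times,\xi)\to\Hh(\G,\l_\P)$ by hand (as in~\cite[\S7.2]{BK} and~\cite[\S4]{VS3}) and invoke the level-zero theory of~\cite{GSZ} inside $\B^\times$. This is essentially a rerun, for non-strict $\La$, of the argument that~\cite{VS3} gives in the strict case.

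The paper instead \emph{reduces} to the strict case already proved in~\cite{VS3}: take the strict lattice sequence $\Ll$ with the same image as $\La$, choose $\b$-extensions $\k$ and $\k^\Ll$ which are compatible in the sense of Definition~\ref{def:compat}, and use this compatibility (as in~\cite[Proposition~4.5]{VS3}) to produce a support-preserving isomorphism $\Hh(\G,\l)\cong\Hh(\G,\l^\Ll)$. Since $\l=\Ind_{\J_\P}^{\J}\l_\P$ and likewise for $\Ll$, this yields $\Hh(\G,\l_\P)\cong\Hh(\G,\l^\Ll_\P)$, and then both the cover property and the Hecke algebra description are imported directly from~\cite[Proposition~5.5, Th\'eor\`eme~4.6]{VS3}.

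Your route is more self-contained but requires redoing the delicate convolution computation establishing that the support-preserving bijection is multiplicative (which you correctly flag as the main obstacle); the paper's route is shorter because it leverages the compatibility machinery set up earlier in~\S\ref{S.simple} to transport everything from the non-strict sequence to the strict one where the result is already known.
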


We remark that the parameter $q_{\D_\E}^{s}$ here is the same as that
in Theorem~\ref{thm:simplecover} of the introduction, by~\cite[Theorem~4.6]{VSU0}.

\begin{proof}
In the case that $\La$ is strict, this is given
by~\cite[Proposition~5.5, Th\'eor\`eme~4.6]{VS3}. The idea of the
proof is to reduce to this case, as in
the proof of~\cite[Th\'eor\`eme~5.6]{VS3}. Let $\Ll$
be the strict lattice sequence with the same image as $\La$ and we make the same
constructions for $\Ll$, which we denote with the superscript
$^\Ll$. In particular, we choose a $\b$-extension $\k^\Ll$ compatible
with $\k$. Hence (as in~\cite[Proposition~4.5]{VS3}) we have a
support-preserving isomorphism
$$
\Hh(\G,\l) \cong \Hh(\G,\l^\Ll).
$$
Moreover, we have $\l=\Ind_{\J_\P}^\J \l_\P$ and 
$\l^\Ll=\Ind_{\J^\Ll_\P}^{\J^\Ll} \l^\Ll_\P$, so we get a
support-preserving isomorphism
$$
\Hh(\G,\l_\P) \cong \Hh(\G,\l^\Ll_\P).
$$
Then the assertions follow from (the proof of)~\cite[Proposition~5.5]{VS3}.
\end{proof}

\begin{defi} A pair $(\J,\l)$ as in this paragraph is called a \emph{simple type}; if $r=1$ then it is called a \emph{maximal simple type}.
\end{defi}

Recall that the main result of~\cite{SS4} (Th\'eor\`emes~5.21,~5.23) is
that every irreducible cuspidal representation of $\G$ contains a
maximal simple type.


\section{Intertwining and conjugacy}\label{S.IIC}

In this section we consider the unicity of the simple type contained in an irreducible representation $\pi$ of $\G=\GL_m(\D)$. That is, we suppose the inertial class $\ss(\pi)$ of $\pi$ is homogeneous: there are a positive integer $r$ dividing $m$, an irreducible cuspidal representation $\rho$ of the group $\G_0=\GL_{m/r}(\D)$ and unramified characters $\chi_i$ of $\G_0$, with $i\in\{1,\dots,r\}$, such that $\pi$ is isomorphic to a quotient of the normalized parabolically induced  representation $\rho\chi_1\times\dots\times\rho\chi_r$.
Unlike the situation for $\D=\F$, the simple type is \emph{not} uniquely determined up to conjugacy in general, as there is a galois action we must take into account.

We consider the set $\SS$ of \emph{sound simple types}
$$
\SS=\left\{\,\hbox{\begin{minipage}{0.6\textwidth}simple types $(\J(\b,\La),\l)$ such that $\La$ is strict, $\PP_0(\La)$ is principal and $(\F[\b],\La)$ is soundly embedded\end{minipage}}\,\right\}.
$$
For $(\J,\l)\in\SS$, we use all the associated notation of~\S\ref{S.simple}: that is, there are a (sound) simple stratum $[\La,n,0,\b]$, a simple character $\t\in\Cc(\La,0,\b)$, a $\b$-extension $\k$ and a representation $\xi$ which is the inflation to $\U(\Ga)$ of the representation $\overline\xi=\s^{\otimes r}$ of
$$
\U(\Ga)/\U^1(\Ga) \simeq \GL_s(k_{\D_\E})^r,
$$
for $\s$ an irreducible cuspidal representation of $\GL_s(k_{\D_\E})$.
Note that, implicit in the isomorphism above are the choice of a decomposition $\V=\V^1\oplus\cdots\oplus\V^r$ subordinate to $\PP_0(\Ga)$ and the choice of an $\E$-algebra isomorphism $\Psi:\B\to\M_{m_\E}(\D_\E)$ as in paragraph~\ref{SS.etaP}.

We fix a  
uniformizer $\w$ of $\D_\E$. The Galois group $\Gg=\Gal(k_{\D_\E}/k_\E)$ identifies, via reduction, with the group generated by ${\rm Ad}(\w)$, the inner automorphism given by conjugation by $\w$.
The Galois group $\Gg$ acts on the representations of $\GL_s(k_{\D_\E})$. Moreover, a different choice of $\E$-algebra isomorphism $\Psi$ could result in a change in the identification $\U(\Ga)/\U^1(\Ga) \simeq \GL_s(k_{\D_\E})^r$ by conjugating each factor by an element of $\Gg$, rather than just by an inner automorphism. Thus we define $[\s]$ to be the orbit of $\s$ under the action of $\Gg$ and set
$$
[\l] = \left\{\,\hbox{\begin{minipage}{0.58\textwidth}equivalence classes of representations $\k\otimes\xi'$, for $\xi'$ the inflation to $\U(\Ga)$ of $\s_1\otimes\cdots\otimes\s_r$, with $\s_i\in[\s]$\end{minipage}}\,\right\}.
$$

We also define an equivalence relation on $\SS$ by: $(\J,\l) \sim (\J,\l')$ if and only if there is an irreducible representation $\pi$ of $\G$ such that $\pi$ contains both $\l$ and $\l'$.

\begin{theo}[{cf.~\cite[Theorem~5.7.1]{BK}}]\label{thm:simpleconj} 
Let $(\J,\l)$ and $(\J',\l')$ be sound simple types. Then $(\J,\l) \sim (\J',\l')$ if and only if there exists $g\in\G$ such that $^g\J'=\J$ and $\left[{}^g\l'\right]=[\l]$.
\end{theo}

\begin{proof} The proof follows that of~\cite[Theorem~5.7.1]{BK}. Suppose first that $(\J,\l) \sim (\J',\l')$ and use all notation as above, with a prime $'$ to indicate the corresponding objects for $\J',\l')$, in particular writing $\E'=\F[\b']$. We also write $\Theta$ for the ps-character defined by $\t$. Then~\cite[Theorems~9.2,~9.3]{BSS} imply that:
\begin{itemize}
\item[$\bullet$] $\Theta'$ is endo-equivalent to $\Theta$;
\item[$\bullet$] $(\E,\La)$ and $(\E',\La')$ have the same embedding type.
\end{itemize}
In particular, by Propositions~\ref{prop:IICpschars} and~\ref{prop:IICwithK}, and the definition of embedding type, there is $g\in\G$ such that $g\La'$ is in the translation class of $\La$, $^g\H^1(\b',\La')=\H^1(\b,\La)$ and $^g\t'=\t$. Replacing $\l'$ by $^g\l'$ we may assume that $g=1$, so that $\t'=\t$; moreover, since changing $\La$ in its translation class affects nothing, we may assume $\La'=\La$.

Now the $\U(\La)$ intertwining of $\t$ is $\J(\b,\La)$ so we get $\J'=\J$ and $\J^1(\b',\La')=\J^1(\b,\La)$. By unicity in Lemma~\ref{eta}, we get $\eta'=\eta$. Moreover, since the intertwining of $\t$ is $\J\B^\times\J=\J(\B')^\times\J$, the $\b$-extension $\k$ is also a $\b'$-extension and we may assume $\k'=\k$.

As in the proof of~\cite[Theorem~5.7.1]{BK}, the cuspidality of $\overline\xi$ can be interpreted in purely group-theoretic terms. In particular, if we identify $\J/\J^1$ with $\GL_s(k_{\D_\E})^r$, then $\overline{\xi'}$ decomposes as $\s'_1\otimes\cdots\otimes\s'_r$ with all $\s'_i$ cuspidal.

Now $\l$, $\l'$ are contained in some irreducible representation $\pi$ of $\G$ and are therefore intertwined by some $x\in\G$. Since $\l$, $\l'$ both restrict to a multiple of $\t$ on $\H^1$, we have also that $x$ intertwines $\t$ and thus $x\in\J^1\B^\times\J^1$. In particular, we may assume $x\in\B^\times$. Then, arguing as in~\cite[Proposition~5.3.2]{BK}, we see that $x$ intertwines $\xi$ with $\xi'$, when we interpret them as representations of $\U(\Ga)$.

To finish, we argue again as in the proof of~\cite[Theorem~5.7.1]{BK}, using results from~\cite{GSZ}. In particular, we will use some notation from~\cite[\S0.8]{GSZ}, writing $\tilde\W_\B$ for the generalized affine Weyl group in $\B^\times$, which we have identified with $\GL_{m_\E}(\D_\E)$ via $\Psi$. By the affine Bruhat decomposition, we may assume $x\in\tilde\W_\B$.

Since $\overline\xi$ and $\overline{\xi'}$ are cuspidal, the same proof as that of~\cite[Proposition~1.2]{GSZ} shows that $x$ normalizes $\LL\cap\U(\Ga)$, where $\LL$ is the Levi subgroup of $\G$ which is the stabilizer of the decomposition $\V=\V^1\oplus\cdots\oplus\V^r$. Likewise,~\cite[Lemma~1.5]{GSZ} implies that 
$$
\Hom_{\U(\Ga)\cap x^{-1}\U(\Ga)x}(\xi',\xi^x) = \Hom_{\U(\Ga)/\U^1(\Ga)}(\overline{\xi'},\overline\xi^x).
$$
Now $\overline\xi^x=\s_1\otimes\cdots\otimes\s_r$, with each $\s_i\in[\s]$ and, since $\overline{\xi'}$, $\overline\xi^x$ are irreducible, we deduce $\s'_i\simeq\s_i\in[\s]$. Hence the equivalence class of $\l'$ is in $[\l]$, so $[\l']=[\l]$, as required.

The converse is given by~\cite[Proposition~5.19]{SS4}. 
\end{proof}

Although, in general, equivalent sound simple types are not conjugate, they are in the special case of cuspidal representations:

\begin{coro}\label{cor:maxsimpleconj}
Suppose $(\J,\l)$, $(\J',\l')$ are maximal simple types. Then $(\J,\l) \sim (\J',\l')$ if and only if there exists $g\in\G$ such that $^g\J'=\J$ and $^g\l'\simeq\l$.
\end{coro}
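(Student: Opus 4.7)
The plan is to apply Theorem~\ref{thm:simpleconj} to reduce to the case $\J'=\J$ with $[\l']=[\l]$, and then to realize the residual Galois ambiguity by conjugating by an explicit element of the normalizer of $\J$ lying in $\B^\times$. The converse direction---that $\G$-conjugacy of the data implies equivalence---is immediate, since both $\l$ and ${}^g\l'$ are then contained in the same cuspidal representation $\cInd_{\tilde\J}^\G\tilde\l$, so we focus on the forward direction.

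From $(\J,\l)\sim(\J',\l')$, Theorem~\ref{thm:simpleconj} (which applies because maximal simple types are sound) provides $g_0\in\G$ with ${}^{g_0}\J'=\J$ and $[{}^{g_0}\l']=[\l]$. Replacing $(\J',\l')$ by its $g_0$-conjugate, we reduce to the case $\J'=\J$ and $[\l']=[\l]$. Since $r=1$ in the maximal case, $[\l]$ consists of isomorphism classes of representations of the form $\k\otimes\xi'$, where $\xi'$ is the inflation to $\U(\Ga)$ of some $\s^\sigma\in[\s]$. We may thus write $\l'\simeq\k\otimes\xi'$ with $\xi'$ inflated from $\s^\sigma$ for some $\sigma\in\Gg$.

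Fix a uniformizer $\w$ of $\D_\E$ and regard $\w\cdot\Id_{m_\E}$ as an element of $\B^\times$ via $\Psi^{-1}$; this element lies in $\KK(\Ga)\cap\B^\times$, hence normalizes $\J$, $\J^1$ and $\H^1$, and fixes $\t$. Its conjugation action on $\J/\J^1\simeq\GL_{m_\E}(k_{\D_\E})$ is the Frobenius generator of $\Gg$. Choosing $i$ with $\sigma={\rm Ad}(\w)^i$ and setting $h=\w^i$, we have ${}^h\xi$ inflated from $\s^\sigma$, so ${}^h\xi\simeq\xi'$. The remaining and main step is to show ${}^h\k\simeq\k$: since $h\in\B^\times$, the representation ${}^h\k$ is again a $\b$-extension of $\t$, so differs from $\k$ by a character of the form $\chi\circ\N_{\B/\E}$, and we must show this character is trivial. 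For this we invoke the construction of the extension $\tilde\l$ of $\l$ to $\tilde\J$ in~\cite{VS3}: the $\b$-extension $\k$ may be chosen to extend to a representation $\tilde\k$ of $\tilde\J$, with $\tilde\l=\tilde\k\otimes\tilde\xi$ for some extension $\tilde\xi$ of $\xi$, which is tantamount to ${}^y\k\simeq\k$ for all $y\in\tilde\J$. Since $h\in\tilde\J$, we conclude ${}^h\l\simeq\k\otimes\xi'\simeq\l'$, as required.
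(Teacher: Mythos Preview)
Your overall strategy matches the paper's exactly: reduce via Theorem~\ref{thm:simpleconj} to the case $\J'=\J$ and $[\l']=[\l]$, then realize the residual Galois twist by conjugating by a power of the uniformizer $\w\in\KK(\Ga)$. The only divergence is in how you justify ${}^h\k\simeq\k$, and here your argument is both unnecessarily elaborate and not quite sound as written.

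You appeal to the construction of $\tilde\l$ on $\tilde\J$ and claim that $\k$ ``may be chosen'' to extend to $\tilde\J$, whence $h\in\tilde\J$ gives ${}^h\k\simeq\k$. There are two problems. First, $\k$ is already fixed by the decomposition $\l=\k\otimes\xi$ used to describe $[\l]$; you cannot re-choose it. Second, the claim $h\in\tilde\J$ depends on what $\tilde\J$ is: if $\tilde\J=\E^\times\J$ then $h=\w^i\cdot\Id$ lies in $\tilde\J$ only when $d_\E\mid i$ (where $d_\E$ is the reduced degree of $\D_\E$ over $\E$), which need not hold; if $\tilde\J$ is the normalizer of $\l$, then membership $h\in\tilde\J$ presupposes ${}^h\l\simeq\l$, which is essentially what you are trying to establish.

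The paper's route is direct and avoids all of this: since $h\in\KK(\Ga)\subseteq\B^\times$ and $\B^\times\subseteq\I_\G(\k)$ by the very definition of a $\b$-extension, $h$ intertwines $\k$; since $h\in\KK(\Ga)$ also normalizes $\J$, this intertwining is an isomorphism ${}^h\k\simeq\k$. That is the whole content of the paper's phrase ``since $y$ also normalizes $\k$''. With this one-line replacement, your proof is correct and coincides with the paper's.
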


\begin{proof} Suppose $(\J,\l)$ and $(\J',\l')$ are equivalent maximal simple types. By Theorem~\ref{thm:simpleconj}, there there exists $g\in\G$ such that $^g\J'=\J$ and $\left[{}^g\l'\right]=[\l]$. That is, as in the proof of Theorem~\ref{thm:simpleconj}, we can write $^g\l'\simeq \k\otimes\xi'$, with $\overline{\xi'}\simeq\overline\xi^\g$, for some $\g\in\Gg$. But the action of $\g$ can be realized as conjugation by a power of $\w$, which normalizes $\Ga$, so there is $y\in\KK(\Ga)$ such that $^y\xi'\simeq\xi$. Since $y$ also normalizes $\k$, we deduce that $^{yg}\l'\simeq\l$.
\end{proof}

This also completes the proof of Theorem~\ref{thm:cuspidal} of the introduction.


\section{Semisimple types}\label{S.Semisimple}

Suppose we have cuspidal representations $\pi_j$ of
$\G^j=\Aut_\D(\V^j)$, for $1\le j\le r$, and we think of
$\LL=\prod_{j=1}^r\G_j$ as a Levi subgroup in $\G=\Aut_\D(\V)$, where 
$\V=\bigoplus_{j=1}^r \V^j$. The aim of this section is to prove
Theorem~\ref{thm:coverhecke}: a maximal simple type for
$(\LL,\pi_1\otimes\cdots\otimes\pi_r)$ admits a cover, with an
explicitly computable Hecke algebra.

For $j=1,\ldots,r$, the cuspidal representation $\pi_j$ contains a
(maximal) simple type $(\J(\b_j,\La^j),\l_j)$, where  
$[\La^j,n_j,0,\b_j]$ is a simple stratum in $\A^j=\End_\D(\V^j)$, 
$\t_j$ is a simple character of $\H^1(\b_j,\La^j)$, $\k_j$ is a
$\b_j$-extension containing $\t_j$, $\sigma_j$ is a cuspidal
representation of $\J(\b_j,\La^j)/\J^1(\b_j,\La^j)$ and
$\l_j=\k_j\otimes\sigma_j$. We write $(\T_j,0,\b_j)$ for the ps-character
defined by $\t_j$. Then our type in $\LL$ is $(\J_\LL,\l_\LL)$,
given by 
$$
\J_\LL = \prod_{j=1}^r \J(\b_j,\La^j),\quad
\l_\LL = \l_1\otimes\cdots\otimes\l_r.
$$

For $j=1,\ldots,r$, we write $\B^j$ for the $\A_j$-centralizer of
$\b_j$. Then $\B_j$ has the form $\End_{\D_{\E_j}}(\W^j)$, for some
right $\D_{\E_j}$-vector space $\W^j$. 
We write $\Ga^j$ for the unique strict $\Oo_{\D_{\E_j}}$-lattice
sequence in $\W^j$ such that $\PB_0(\Ga^j)=\PP_0(\La^j)\cap\B^j$. Since
$\pi_j$ is cuspidal, $\Ga^j$ is a sequence of $\Oo_{\D_{\E_j}}$-period $1$ and
then the normalizer in $\G^j\cap\B^j$ of $\Ga^j$ is just
$\KK(\La^j)\cap\B^j$; moreover, $\La^j$ is a strict
lattice sequence and $\PP_0(\La^j)$ is a
principal order in $\A^j$.


\subsection{The homogeneous case}

We suppose first that the $(\T_j,0,\b_j)$ are all endo-equivalent to
some fixed ps-character $(\T,0,\b)$. 

By Lemma~\ref{lem:PSembed}, for each $j$ there is a realization
$\T(\La^j,0,\ii_j)$ equal to $\t_j$. Hence we may (and do) assume that
all $\t_j$ are defined relative to the same simple pair $(0,\b)$
and are realizations of the same ps-character $\T$.

\medskip

We have $\V=\bigoplus_{j=1}^r \V^j$ and put $\W=\bigoplus_{j=1}^r
\W^j$, so that $\B=\End_{\D_\E}(\W)$. Write $\e^j$ for the idempotent
in $\B$ with image $\W^j$ and kernel $\bigoplus_{i\ne j}\W^i$. As an
element of $\A$ it has image $\V^j$ and kernel $\bigoplus_{i\ne
  j}\V^i$. 

Let $\Ga$ be an $\Oo_{\D_{\E}}$-lattice sequence in $\W$ such that:
\begin{enumerate}
\item[(i)] $\Ga\cap\W^j$ is in the affine class of $\Ga^j$, and 
\item[(ii)] $\Ga$ is subordinate to the decomposition
  $\W=\bigoplus_{j=1}^r \W^j$. 
\end{enumerate}
Note that condition (ii) is generically satisfied: that is, amongst
all lattice sequences satisfying (i), those also satisfying (ii) are
dense (in the building of $\B^\times$). A particular example of such a
sequence is given by
$$
\Ga(k)\ =\ \bigoplus_{j=1}^{r} 
\Ga^j\left(\left\lfloor\frac{k+j}{r}\right\rfloor\right),\qquad k\in\ZZ,
$$
which is strict of $\Oo_{\D_{\E}}$-period $r$ but not principal in
general. 

We fix $\Ga$ satisfying (i),(ii) and let $\La$ be the corresponding
$\Oo_\D$-lattice sequence in $\V$ given
by~\cite[Th\'eor\`eme~1.7]{SS4}. Then 
$\e^j\in\PB_0(\Ga)\subseteq\PP_0(\La)$ so $\La$ is decomposed by (indeed
subordinate to) the decomposition
$\V=\bigoplus_{j=1}^r \V^j$ and the lattice sequence $r\mapsto
\La(r)\cap \V^j$ is in the affine class of $\La^j$. In fact,
replacing $\La^j$ by this sequence changes nothing in the construction
of the type $(\J(\b_j,\La^j),\l_j)$ so we may (and do) assume this
done, in which case $\La=\bigoplus_{j=1}^r\La^j$.

Let $\P$ be the parabolic subgroup of $\G$ stabilizing the flag
$$
\{0\}\subset \V^1\subset \V^1\oplus \V^2 \subset \cdots \subset \V,
$$
and write $\P=\LL\N$, where $\LL$ is the stabilizer of the decomposition
$\V=\bigoplus_{j=1}^r \V^j$ and $\N$ is the unipotent radical. Write
$\P_-=\LL\N_-$ for the opposite parabolic relative to $\LL$.

Now $[\La,n,0,\b]$ is a simple stratum in $\A=\End_\D(\V)$, for a
suitable integer $n$, and we have 
$\H^1(\b,\La)\cap\LL\cong \prod_{j=1}^r \H^1(\b,\La^j)$, with similar
decompositions for $\J^1$ and $\J$. Moreover,
$\t=\T(V,0,\La)$ is a simple character such that
$$
\t|\H^1(\b,\La)\cap\LL = \t_1\otimes\cdots\otimes\t_r.
$$
As in~\S\ref{S.simple}, we define the groups
$$
\J_\P=\H^1(\b,\La)\left(\J(\b,\La)\cap\P\right),\qquad
\J^1_\P=\H^1(\b,\La)\left(\J^1(\b,\La)\cap\P\right),
$$
noting that $\J_\LL=\J_\P\cap\LL$.

Let $\k_\P$ be the irreducible representation of $\J_\P$ given by
Proposition~\ref{prop:kP}, so that $\k_\P|\J_\LL \simeq
\bigotimes_{j=1}^r \k'_j$, for some $\b$-extensions $\k'_j$ containing
$\t_j$. Then we can choose the decompositions
$\l_j=\k_j\otimes\sigma_j$ of the maximal simple types above so that
$\k_j=\k'_j$, which we assume done.

We define an equivalence relation $\sim$ on $\{1,\ldots,r\}$
by 
$$
j\sim k\ \iff\ \sigma_j\simeq\sigma_k^\gamma,\hbox{ for some
}\gamma\in\Gal(k_{\D_\e}/k_\E),
$$
and denote by $I_1,\ldots,I_l$ the equivalence classes. Put $r_i=\#
I_i$ and define $s_i$ by
$\J(\b,\La^j)/\J^1(\b,\La^j)\simeq\GL_{s_i}(k_{\D_\E})$, for any $j\in
I_i$. Note also that, by conjugating the types $\l_j$ by a suitable
element of $\D_\E^\times$, we may (and do)
assume that $\sigma_j\simeq\sigma_k$ whenever $j\sim k$.
Put $\Y^i=\bigoplus_{j\in I_i}\V^j$ and denote by $\M$ the Levi subgroup
which stabilizes the decomposition $\V=\bigoplus_{i=1}^l \Y^i$. Note
that this is the Levi subgroup $\M$ of the introduction.

\medskip

Now $\J_\P/\J^1_\P\cong\prod_{j=1}^r \J(\b,\La^j)/\J^1(\b,\La^j)$ so we
can define a representation $\sigma$ of $\J_\P$ inflated from
$\bigotimes_{j=1}^r \sigma_j$. Then we put $\l_\P=\k_\P\otimes\sigma$.
We put $\J_\M=\J_\P\cap \M$ and $\l_\M=\l_\P|\J_\M$. 

\begin{prop}\label{prop:hom}
The pair $(\J_\P,\l_\P)$ is a cover of
$(\J_\M,\l_\M)$, which is a cover of $(\J_\LL,\l_\LL)$. Moreover, we
have a support-preserving Hecke algebra isomorphism 
$$
\Hh(\M,\l_\M)\cong \Hh(\G,\l_\P)
$$
and
$$
\Hh(\M,\l_\M)\cong\bigotimes_{i=1}^l \Hh(r_i,q_{\D_\E}^{s_i}).
$$
\end{prop}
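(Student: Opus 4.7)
The plan is to verify the two cover relations separately, then combine them using transitivity of covers \cite[(8.5)]{BK1}, deducing the Hecke algebra isomorphisms along the way.

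First, for $(\J_\M,\l_\M)$ being a cover of $(\J_\LL,\l_\LL)$ in $\M$: we have $\M=\prod_{i=1}^l \G^{(i)}$ with $\G^{(i)}=\Aut_\D(\Y^i)$, and within each factor $\G^{(i)}$ we are precisely in the situation of Proposition~\ref{simplecase}. Indeed, by the conjugation step preceding the statement, all $\s_j$ with $j\in I_i$ have been arranged to be isomorphic, and the $\V^j$ with $j\in I_i$ share the same $\D_\E$-dimension $s_i$. Applying Proposition~\ref{simplecase} to each factor gives a cover within $\G^{(i)}$ with Hecke algebra $\Hh(r_i,q_{\D_\E}^{s_i})$; taking products yields the stated cover relation and the decomposition $\Hh(\M,\l_\M)\cong\bigotimes_{i=1}^l\Hh(r_i,q_{\D_\E}^{s_i})$.

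Second, for $(\J_\P,\l_\P)$ being a cover of $(\J_\M,\l_\M)$ together with the support-preserving isomorphism $\Hh(\G,\l_\P)\cong\Hh(\M,\l_\M)$: the $\M$-Iwahori decomposition of $\J_\P$ with respect to the parabolic of $\G$ with Levi $\M$ containing $\P$ follows from the construction of $\J_\P$ via the idempotents $\e^j$, refining the $\LL$-Iwahori decomposition, while $\l_\P$ is trivial on the corresponding unipotent radicals by Proposition~\ref{prop:kP}(ii). The key step is the computation of the intertwining set: Proposition~\ref{prop:kP}(vi) yields
$$
\I_\G(\l_\P) = \J_\P\, \I_{\B^\times}(\s)\, \J_\P,
$$
where $\s$ is regarded as a representation of $\U(\Ga)$ trivial on $\U^1(\Ga)$, so it suffices to prove the inclusion $\I_{\B^\times}(\s)\subseteq \J_\P(\M\cap\B^\times)\J_\P$.

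This last inclusion is the main technical obstacle, which I would handle by arguing as in the proof of Theorem~\ref{thm:simpleconj}. Via the affine Bruhat decomposition in $\B^\times\simeq\GL_{m_\E}(\D_\E)$, any intertwining element of $\s$ may be taken in the extended affine Weyl group $\tilde\W_\B$. The cuspidality of $\s$ on $\U(\Ga)/\U^1(\Ga)\simeq\prod_j\GL_{s_j}(k_{\D_\E})$, together with~\cite[Proposition~1.2,~Lemma~1.5]{GSZ}, forces such an element to normalize the block factorization and to induce, on the cuspidal tensor factors, a permutation of the factors composed with a twist of each $\s_j$ by an element of $\Gg$. By the very definition of the equivalence relation $\sim$, cuspidal representations in different classes $I_i$ remain non-isomorphic even after Galois twist, so the element must preserve the decomposition $\V=\bigoplus_i\Y^i$ and therefore lies in $\M\cap\B^\times$ modulo $\J_\P$, as required. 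Combined with a strongly positive invertible element of $\Hh(\G,\l_\P)$, transferred from a suitable central element of $\M$ through the Hecke algebra of $\M$, the general covers machinery of~\cite[(7.9)]{BK1} then supplies both the cover property and the support-preserving Hecke algebra isomorphism, completing the proof.
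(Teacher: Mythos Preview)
Your proposal is correct and follows essentially the same route as the paper. The paper simply cites~\cite[Proposition~5.17]{SS4} for the intertwining containment $\I_\G(\l_\P)\subseteq\J_\P(\B^\times\cap\M)\J_\P$, whereas you sketch its content directly via Proposition~\ref{prop:kP}(vi) and the level-zero argument from~\cite{GSZ}; the paper then invokes~\cite[7.2]{BK1} rather than~\cite[7.9]{BK1}, which is slightly more direct since the intertwining containment already gives the support-preserving Hecke algebra isomorphism without needing to produce a strongly positive element separately.
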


\begin{proof} The proof of the first assertion is identical to that
  of~\cite[Proposition~5.17]{SS4}; 
indeed the proof there shows that $\I_\G(\l_\P)\subseteq
\J_\P(\B^\times\cap\M)\J_\P$ and then the first Hecke algebra isomorphism
also follows from ~\cite[7.2]{BK1}. 

That $(\J_\M,\l_\M)$ is a cover of $(\J_\LL,\l_\LL)$ follows from
Proposition~\ref{simplecase}, as does the second Hecke algebra
isomorphism.
\end{proof}


\subsection{The general case}

We now treat the general case, where the ps-characters $\T_j$ are
not all endo-equivalent. We define an equivalence relation $\sim$ on
$\{1,\ldots,r\}$ by 
$$
j\sim k\ \iff\ \T_j,\T_k\hbox{ are endo-equivalent},
$$
and denote by $I_1,\ldots,I_l$ the equivalence classes. We put
$\Y^{(i)}=\bigoplus_{j\in I_i}\V^j$. As in the homogeneous case, we
assume that, for fixed $i$, every $j\in I_i$ has the same
ps-character, defined relative to the same simple pair. We write
$\La^{(i)}$ for an $\Oo_\D$-lattice sequence in
$\Y^{(i)}$, as in the homogeneous case. 
By changing in their affine class, we may (and do) assume that all the
$\La^{(i)}$ have 
the same $\Oo_\F$-period; as in the homogeneous case, we suppose also
that we have replaced the lattice sequences $\La^j$ with sequences in
their affine class, so that $\La^{(i)}=\bigoplus_{j\in I_i}\La^j$.

We write $\M$ for the Levi subgroup of $\G$ which stabilizes the
decomposition $\V=\bigoplus_{i=1}^l \Y^{(i)}$, and $(\J_\M,\l_\M)$ for
the cover of $(\J_\LL,\l_\LL)$ in $\M$, given by the homogeneous case
in Proposition~\ref{prop:hom}. Note that this $\M$ is now \emph{not}
the Levi subgroup of the introduction, but one rather larger.

We put $\La=\bigoplus_{i=1}^l \La^{(i)}$, a (not necessarily
strict) $\Oo_\D$-lattice sequence in $\V$,
and $\b=\sum_{j=1}^r \b_j\in\A=\End_\D(\V)$. Then $[\La,n,0,\b]$ is a
(non-simple) stratum in $\A$, for a suitable integer $n$.  

For $0\le t\le n$, we write $\T_j^{(t)}$ for the ps-character
defined by the character $\t_j|\H^{t+1}(\beta_j,\La^j)$.

\begin{theo}[{cf.~\cite[Main~Theorem,~p.94]{BKsemi}}]\label{thm:general}
There is a cover $(\K,\tau)$ of the type $(\J_\M,\l_\M)$ with the
following properties:
\begin{enumerate}
\item[(i)] $\U^{n+1}(\La)\subseteq \K\subseteq \U(\La)$;
\item[(ii)] if the ps-characters $\T_j^{(t)}$, for $1\le j\le r$, are
  endo-equivalent, and $([\La,n,0,\gamma],\vartheta,t)$ is a common
  approximation of $(\t_1,\ldots,\t_r)$, then
\begin{enumerate}
\item $\K$ contains and normalizes $\H^{t+1}(\gamma,\La)\cdot
  \left(\H^t(\gamma,\La)\cap\M\right)$; 
\item $\tau|\H^{t+1}(\gamma,\La)$ is a multiple of $\vartheta$;
\item $\tau|\H^t(\gamma,\La)\cap\LL$ is a multiple of
  $\t_1\otimes\cdots\otimes\t_r$;
\end{enumerate}
\item[(iii)] there is a support-preserving isomorphism of Hecke algebras
  $\Hh(\M,\l_\M)\simeq\Hh(\G,\tau)$. 
\end{enumerate}
\end{theo}

\begin{proof} The proof is by induction on $r$, the case $r=1$ being
empty. So let $r>1$ and suppose that $t\ge 0$ is minimal such that the
ps-characters $\T_j^{(t)}$ are endo-equivalent. Let
$([\La,n,0,\gamma],\vartheta,t)$ be a common approximation of
$(\t_1,\ldots,\t_r)$. If $t=0$ then the Theorem is given by
Proposition~\ref{prop:hom}, so we assume $t>0$. We allow $t=n$ (that is, the
ps-characters $\T_j^{(n-1)}$ are not all endo-equivalent) in
which case $\vartheta$ is the trivial character of $\U^{n+1}(\La)$. We
use the notation of~\S\ref{S.common}.

For $1\le j\le r$, let $c_j\in\PP_{-t}(\La^j)$ be such that 
$\t_j|\H^{t+1}(\beta_j,\La^j) =
\vartheta\psi_{c_j}$. By Corollary~\ref{cor:derived}, the derived stratum
$[\Ga^j_\g,t,t-1,s_\g(c_j)]$ is equivalent to a simple (or null)
stratum and we write $\phi_j(X)$ for the minimum polynomial of this
stratum (so that the characteristic polynomial is a power of
$\phi_j(X)$). 
We define an equivalence relation on $\{1,\ldots,r\}$ by 
$$
j\sim_t k\ \iff\ 
\T^{(t-1)}_j,\T^{(t-1)}_k\hbox{ are endo-equivalent}.
$$
Note that, by Corollary~\ref{cor:endocommon}, we have
$j\sim_t k$ if and only if $\phi_j(X)=\phi_k(X)$.
Let $J$ denote an equivalence class for $\sim_t$ for which the
minimum polynomial is not $X$; then $J$ is a union of
certain equivalence classes $I_i$ but is not the whole of $\{1,\ldots,r\}$,
or else we would contradict the minimality of $t$. 

Set
$\Z=\bigoplus_{j\in J} \V^j$ and
$\Z'=\bigoplus_{j\not\in J}\V^j$; let $\bar\M$ be the Levi subgroup
which stabilizes the decomposition $\V=\Z\oplus\Z'$ and let
$\bar\P=\bar\M\bar\N$ be a parabolic subgroup with Levi component
$\bar\M$ and opposite $\bar\P^-=\bar\M\bar\N^-$. By the inductive
hypothesis, we have a cover $(\K_{\bar\M},\tau_{\bar\M})$ of
$(\K_{\M},\tau_{\M})$ satisfying the conditions of the theorem (with
$\bar\M$ in place of $\G$). We define the group $\K$ by
$$
\K = \K_{\bar\M} \H^t(\g,\La)
  \cdot (\U^1(\La)\cap\B)\Om_{q-t+1}(\g,\La)\cap\bar\N,
$$
where $\Om_{q-t+1}(\g,\La)$ is the group defined in~\cite[\S2.8]{SS4}.
Then~\cite[Corollaire~4.6]{SS4} says that there is a unique
irreducible representation $\tau$ of $\K$ such that $(\K,\tau)$ is a
cover of $(\K_{\bar\M},\tau_{\bar\M})$, which has all the required
properties by transitivity of covers. [Note that, although it is
assumed in~\cite[\S4]{SS4} that the lattice sequence $\La$ is strict,
this extra condition is never used.]
\end{proof}

Now Theorem~\ref{thm:coverhecke} of the introduction follows from
Theorem~\ref{thm:general} and Proposition~\ref{prop:hom}, whence the
Main Theorem.

\providecommand{\bysame}{\leavevmode ---\ }
\providecommand{\og}{``}
\providecommand{\fg}{''}
\providecommand{\smfandname}{\&}
\providecommand{\smfedsname}{\'eds.}
\providecommand{\smfedname}{\'ed.}
\providecommand{\smfmastersthesisname}{M\'emoire}
\providecommand{\smfphdthesisname}{Th\`ese}

\end{document}